\def \circ {14pt}
\newtheorem{lemma}{Lemma}
\newtheorem{theorem}{Theorem} 
\newtheorem{definition}{Definition}
\newtheorem{claim}{Claim}
\newtheorem{conjecture}{Conjecture}
\newcommand{\p}{\mathcal{P}}
\newcommand{\floor}[1]{\left\lfloor{#1}\right\rfloor}
\DeclareMathOperator{\ex}{ex}
\title{Planar Tur\'an Number of Double Stars}
\author[1,2]{\hspace{1cm}Debarun Ghosh}
\author[1,2]{Ervin Gy\H{o}ri} 
\author[1,2]{Addisu Paulos}
\author[1,2]{Chuanqi Xiao}
\affil[1]{Central European University, Budapest\par
\texttt{ghosh\textunderscore debarun@phd.ceu.edu, xiao\textunderscore chuanqi@outlook.com, addisu\textunderscore 2004@yahoo.com}}
\affil[2]{Alfr\'ed R\'enyi Institute of Mathematics, Budapest \par
\texttt{gyori.ervin@renyi.hu}}
\date{}
\begin{document}
\baselineskip=20 pt
\maketitle
\begin{abstract}
Given a graph $F$, the planar Tur\'an number of $F$, denoted by $\ex_{\p}(n, F)$, is the maximum number of edges in an $n$-vertex $F$-free planar graph. Such an extremal graph problem was initiated by Dowden while determining the sharp upper bound for $\ex_{\p}(n,C_4)$ and $\ex_{\p}(n,C_5)$, where $C_4$ and $C_5$ are cycles of length four and five, respectively.  In this paper, we determine the upper bounds for $\ex_{\p}(n,S_{2,2})$, $\ex_{\p}(n,S_{2,3})$, $\ex_{\p}(n,S_{2,4})$, $\ex_{\p}(n,S_{2,5})$, $\ex_{\p}(n,S_{3,3})$ and $\ex_{\p}(n,S_{3,4})$, where $S_{m,n}$ is a double star with $m$ and $n$ leafs. Moreover, the bounds for $\ex_{\p}(n,S_{2,2})$ and $\ex_{\p}(n,S_{2,3})$ are sharp.   \end{abstract}

\section{Introduction}

One of the famous problems in extremal graph theory is determining the number of edges in an $n$-vertex graph to force a particular graph structure. 
The well-known result of Tur\'an's \cite{turan} gives the maximum number of edges in an $n$-vertex graph containing no complete graph of a given order.  The \emph{Tur\'an number} of a graph $H$, denoted by $\text{ex}(n, H)$, is the maximum number of edges in an $n$-vertex graph that does not contain $H$ as a subgraph. Let $\text{EX}(n,H)$ denote the set of extremal graphs, that is the set of all $n$-vertex, $H$-free graph $G$ such that $e(G)=\text{ex}(n,H)$.  Erd\H{o}s, Stone and Simonovits~\cite{erdos1,erdos2} gave a more generalized result where they determined the asymptotics of $\text{ex}(n,F)$ for all non-bipartite graphs $F$.  They proved that $\text{ex}(n,F)=(1-\frac{1}{\chi(F)-1})\binom{n}{2}+o(n^{2})$, where $\chi(F)$ denotes the chromatic number of $F$.

Over the last decade, a considerable amount of research work has been carried out on Tur\'an type problems. For example, when the host graphs are $K_n$, $k$-uniform hypergraphs, and $k$-partite graphs, see \cite{erdos2,zykov}.  In 2015, Dowden~\cite{dowden} initiated the study of Tur\'an-type problems when the host graph is  planar graph, that is, how many edges a planar graph on $n$ vertices can have without containing a given smaller graph? The planar Tur\'an number of a graph $F$,  denoted by $\text{ex}_{\mathcal{P}}(n, F)$, is the maximum number of edges in a planar graph on $n$ vertices containing no $F$ as a subgraph.  Dowden~\cite{dowden} determined a sharp upper bound for $\ex_{\p}(n, C_4)$ and $\ex_{\p}(n,C_5)$, where $C_4$ and $C_5$ are a cycle of length four and five respectively. Very recently, Ghosh et al.~\cite{ghosh} obtained a sharp upper bound for $\ex_{\p}(n, C_6)$, where $C_6$ is a cycle of length six.  

The analog to Tur\'an's theorem in the case of planar graphs is fairly trivial. Since $K_5$ is not planar, there are only two meaningful cases. For the $K_{3}$, the extremal number of edges is $2n-4$, and the extremal graph is $K_{2,n-2}$ (since all faces have size four when drawn in the plane).  Note that there exist planar triangulations not containing $K_4$ (e.g., take a cycle of length $n - 2$ and then add two new vertices which are adjacent to all those in the cycle). Thus, the extremal number in the case of $K_4$ is $3n - 6$. The planar Tur\'an number when the forbidden subgraph is a star is also fairly trivial.  The authors in \cite{lan_Hfreeplanar} proved that $\ex_{\mathcal{P}} (n, H) = 3n - 6$ for all $H$ with $n > |H|+2$ and either $\chi(H) = 4$ or $\chi(H) = 3$ and $\Delta(H) > 7$. They also completely determined $\ex_{\p} (n, H)$ when $H$ is a wheel or a star, and the case when $H$ is a $(t, r)$-fan, that is, $H$ is isomorphic to $K_1 + tK_{r-1}$, where $t > 2$  and $r > 3$ are integers. The next most natural type of graph to investigate is perhaps a path. For extremal planar Tur\'an number for paths of length $\{6, 7, 8, 9, 10, 11\}$, we refer the reader to \cite{lan2} and \cite{lan_Hfreeplanar}.  Planar Tur\'an number of other graphs, for instance, wheels and Theta graphs also be considered. We refer the reader to \cite{ghosh2, lan2, lan} for detailed results. The next natural extension of the topic is considering double stars as the forbidden graph.

\begin{definition}
An \emph{($m$,$n$)-double star}, denoted by $S_{m,n}$, is the graph  obtained by taking an edge, say $xy$, and joining one of its end vertices, say $x$, with $m$ vertices and  the other end vertex, $y$, with $n$ vertices which are different from the $m$ vertices.  The edge $xy$ is called the \emph{backbone} of the double star. The vertices adjacent to an end vertex of the backbone are called the \emph{leaf-sets} of the double star. Figure \ref{double} shows an $m$-$n$ double star such that the backbone is $xy$ and the leaf-sets are $\{x_1,x_2,\dots,x_m\}$ and $\{y_1,y_2,\dots,y_n\}$, respectively. 
\end{definition}

\begin{figure}[ht]
\centering
\begin{tikzpicture}[scale=0.1253]
\draw[fill=black](-10,0)circle(20pt);
\draw[fill=black](10,0)circle(20pt);
\draw[fill=black](20,10)circle(15pt);
\draw[fill=black](20,6)circle(15pt);
\draw[fill=black](20,2)circle(15pt);
\draw[fill=black](20,-3)circle(5pt);
\draw[fill=black](20,-4)circle(5pt);
\draw[fill=black](20,-5)circle(5pt);
\draw[fill=black](20,-8)circle(15pt);
\draw[fill=black](-20,10)circle(15pt);
\draw[fill=black](-20,6)circle(15pt);
\draw[fill=black](-20,2)circle(15pt);
\draw[fill=black](-20,-3)circle(5pt);
\draw[fill=black](-20,-4)circle(5pt);
\draw[fill=black](-20,-5)circle(5pt);
\draw[fill=black](-20,-8)circle(15pt);
\draw[black, ultra thick](-10,0)--(10,0)(10,0)--(20,10)(10,0)--(20,6)(10,0)--(20,2)(10,0)--(20,-8)(-10,0)--(-20,10)(-10,0)--(-20,6)(-10,0)--(-20,2)(-10,0)--(-20,-8);
\node at (-10,-2) {$x$};
\node at (10,-2) {$y$};
\node at (22,10) {$y_1$};
\node at (22,6) {$y_2$};
\node at (22,2) {$y_3$};
\node at (22,-8) {$y_n$};
\node at (-23,10) {$x_1$};
\node at (-23,6) {$x_2$};
\node at (-23,2) {$x_3$};
\node at (-23,-8) {$x_m$};
\end{tikzpicture}
\caption{A double star $S_{m,n}$ with $m$ and $n$ leafs.}
\label{double}
\end{figure}
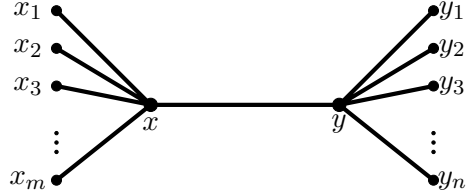

In this paper, we address the upper bounds of planar Tur\'an number of  the following double stars: $S_{2,2}$, $S_{2,3}$, $S_{2,4}$, $S_{2,5}$, $S_{3,3}$ and $S_{3,4}$.  Moreover, the bound for $\ex_{\p}(n,S_{2,2})$ and $\ex_{\p}(n,S_{2,3})$ is sharp. The bound for $\ex_{\p}(n,S_{3,3})$ is sharp up-to a linear term.  Before proceeding to our results, we mention notations and terminologies to be used in next. 

All the graphs we consider in this paper are simple and finite. Let $G$ be a graph. We denote the vertex and edge set of $G$ by $V(G)$ and $E(G)$, respectively. Let $e(G)$ and $v(G)$ denote the number of edges and vertices, respectively.  We denote the degree of a vertex $v$ by $d(v)$, the minimum degree in graph $G$ by $\delta(G)$ and the maximum degree in graph $G$ by $\Delta(G)$. The subgraph induced by $S\subseteq V(G)$, is denoted by $G[S]$. Moreover, $N(v)$ denotes the set of vertices in $G$ adjacent to $v$. Let $A$ and $B$ be disjoint subsets of $V(G)$. Let $e(A,B)$ denote the number of edges in $G$, that joins a vertex in $A$ and a vertex in $B$. An $m$-$n$ edge is an edge such that the end vertices of the edge are with degree $m$ and $n$.  The join $G=G_1+G_2$ of graphs $G_1$ and $G_2$ with disjoint vertex sets $V_1$ and $V_2$ and edge sets $X_1$ and $X_2$ is the graph union $G_1\cup G_2$ together with all the edges joining $V_1$ and $V_2$. 

The following theorem summarizes the main results:
\begin{theorem}\label{doublestarsmain}
Estimates on the planar Tur\'an number of double stars $S_{m,n}$ for given values of $\{m,n\}$ are as follows:
\begin{enumerate}[label=(\roman*)]
    \item For any $n\geq 16$, $\ex_{\p}(n,S_{2,2})=  2n-4$.
    \item For any $n\geq 1$, $\ex_{\mathcal{P}}(n,S_{2,3})=2n.$
    \item For any $n\geq 1$, $\frac{15}{7}n \leq \ex_{\mathcal{P}}(n,S_{2,4})\leq\frac{8}{3}n.$
    \item For $n\geq 1$, $\frac{5}{2}n \leq \ex_{\mathcal{P}}(n,S_{2,5})\leq\frac{20}{7}n.$
    \item For $n\geq 3$, $\frac{5}{2}n-5\leq\ex_{\mathcal{P}}(n,S_{3,3})\leq\frac{5}{2}n-2.$
    \item For $n\geq 1$, $\frac{9}{4}n\leq \ex_{\mathcal{P}}(n,S_{3,4})\leq\frac{20}{7}n.$
\end{enumerate}
\end{theorem}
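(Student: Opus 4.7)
The plan is to prove the six parts one by one, combining in each case a local structural lemma about $S_{m,n}$-freeness with Euler's formula for planar graphs. The governing observation is that a graph $G$ contains $S_{m,n}$ if and only if some edge $xy\in E(G)$ satisfies $d(x)\geq m+1$, $d(y)\geq n+1$, and $|N(x)\cup N(y)|\geq m+n+2$: under these three conditions Hall's theorem lets us pick disjoint $m$- and $n$-subsets of $N(x)\setminus\{y\}$ and $N(y)\setminus\{x\}$ to serve as the two leaf-sets, and without them no such choice exists. Consequently, in every $S_{m,n}$-free graph each edge joining a vertex of degree $\geq m+1$ to a vertex of degree $\geq n+1$ must have $|N(x)\cup N(y)|\leq m+n+1$, so its endpoints share almost all of their neighborhoods. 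This single constraint drives all six upper bounds.

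The lower bounds in (i), (ii) and (v) are provided by explicit planar constructions. The graph $K_{2,n-2}$ is planar, has $2n-4$ edges, and is $S_{2,2}$-free (and hence $S_{2,3}$-free) because every edge has one endpoint of degree exactly $2$; this gives the lower bounds in (i) and (ii). For (v) one exhibits a planar graph with $\tfrac{5}{2}n-5$ edges avoiding $S_{3,3}$, naturally built from copies of a small $S_{3,3}$-free gadget (for instance, a $K_4$-minus-edge or a diamond) glued along a common backbone so that every pair of adjacent vertices of degree $\geq 4$ has nearly identical neighborhoods.

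For each upper bound I would run a discharging argument on a planar embedding of $G$: assign initial charges $d(v)-4$ to vertices and $\ell(f)-4$ to faces so that the total charge is $-8$ by Euler's formula, and redistribute according to rules dictated by the $S_{m,n}$-free constraint, using the notions of \emph{inner} and \emph{outer} edges introduced earlier. The structural observation prevents too many short faces from being incident to pairs of heavy vertices at once, because then $|N(x)\cup N(y)|$ would exceed $m+n+1$ on some edge. Collecting the resulting inequalities yields the coefficients $2$, $\tfrac{17}{7}$, $\tfrac{20}{7}$ and $\tfrac{5}{2}$ in the six statements.

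The main obstacle will be the detailed case analysis in parts (iii), (iv) and (vi), where the nonobvious coefficients $\tfrac{17}{7}$ and $\tfrac{20}{7}$ indicate a delicate balance between $3$-faces and $4$-faces incident to pairs of heavy vertices; each candidate local configuration around such a pair must either be forbidden by exhibiting an explicit copy of $S_{m,n}$, or be shown to contribute enough charge on its own. A secondary difficulty is the sharpness in (i), where one must justify the threshold $n\geq 16$ by separately ruling out the finitely many small planar graphs that could a priori achieve $2n-3$ edges without containing $S_{2,2}$.
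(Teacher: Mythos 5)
Your opening observation is correct and worth keeping: for an edge $xy$, a copy of $S_{m,n}$ with backbone $xy$ exists precisely when $d(x)\geq m+1$, $d(y)\geq n+1$ and $\abs{N(x)\cup N(y)}\geq m+n+2$, by the standard defect argument for choosing disjoint leaf-sets. This cleanly packages the local constraints that the paper derives ad hoc (e.g.\ that every $5$-$5$ edge in an $S_{2,4}$-free graph lies in at least $3$ triangles, or that a $6$-$6$ edge in an $S_{3,3}$-free graph lies in $5$ triangles). The lower bounds for (i) and (ii) via $K_{2,n-2}$ match the paper; your description of the lower bound for (v) is too vague to verify, whereas the paper's construction (a perfect matching on $n-2$ vertices joined completely to two apex vertices, giving $\tfrac{5}{2}n-5$ edges with every edge meeting a degree-$3$ vertex) is explicit and should be stated as such.

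The genuine gap is that the entire upper-bound argument is deferred to an unspecified discharging scheme. You assign charges $d(v)-4$ and $\ell(f)-4$ summing to $-8$, but you give no discharging rules, no list of reducible configurations, no verification of final charges, and no derivation of the coefficients $\tfrac{17}{7}$, $\tfrac{20}{7}$, $\tfrac{5}{2}$ — which is exactly where all the difficulty lives. It is also unclear that this charge assignment can even produce a conclusion of the form $e(G)\leq cn+O(1)$ with $c<3$ without first identifying an unavoidable reducible configuration, which you never do. The paper's route is different and fully executed: it inducts on $n$ by deleting the closed neighborhood $H$ of a high-degree vertex or of a heavy edge (a $5$-$5$, $6$-$6$, $5$-$6$, $6$-$7$, or $7$-$7$ edge, depending on the part), bounds $e(G)-e(G-H)$ through an exhaustive case analysis of inner and outer edges of $H$ using the triangle constraints, and then handles the residual case where every edge satisfies $d(x)+d(y)\leq 9$ (resp.\ $11$) via $c\cdot e(G)\geq\sum_{xy\in E(G)}(d(x)+d(y))=\sum_{v}d(v)^2\geq n\bigl(\tfrac{2e(G)}{n}\bigr)^2$, giving $e(G)\leq\tfrac{c}{4}n$; for $S_{3,3}$ it additionally partitions $V(G)$ by degree and counts edges between the classes using planarity of a bipartite subgraph. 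To repair your proposal you would either have to supply the complete discharging machinery or switch to a deletion-plus-induction scheme of this kind; as written, parts (iii)--(vi) and the upper bounds in (i)--(ii) are not proved. Your closing remark about $n\geq16$ also misreads the issue: the theorem genuinely fails for small $n$ (e.g.\ $K_4$ is $S_{2,2}$-free with $6>2\cdot4-4$ edges), and in the paper the threshold arises from needing at least eight low-degree vertices in the counting step, not from checking small graphs that might have $2n-3$ edges.
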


\section{Planar Tur\'an number of \texorpdfstring{$S_{2,2}$}{S2,2}}
We start by proving the following weaker bounds:    
\begin{lemma}\label{mm3}
Let $G$ be an $S_{2,2}$-free plane graph on $n$ ($n\neq 5$) vertices, then $e(G)\leq 2n-2$.
\end{lemma}
\begin{proof}
Suppose that $G$ contains $6$ vertices. There are only two $6$-vertex maximal planar graphs $M_1$ and $M_2$ as shown in Figure~\ref{bbs}. It can be checked that $M_1^-$ and $M_2^-$ both contain an $S_{2,2}$. Thus, $e(G)\leq 10=2n-2$, when $n=6$. 
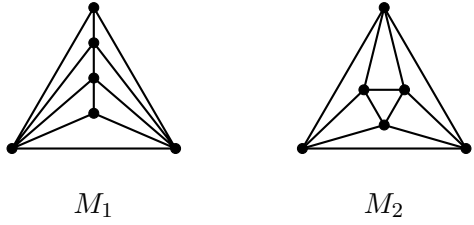
\begin{figure}[ht]
\centering
\begin{tikzpicture}[scale=0.125]
\draw[thick](0,10)--(8.7,-5)--(-8.7,-5)--(0,10)(0,10)--(0,-1.25)(-8.7,-5)--(0,6.25)--(8.7,-5)(-8.7,-5)--(0,2.5)--(8.7,-5)(-8.7,-5)--(0,-1.25)--(8.7,-5);
\draw[fill=black](0,10)circle(15pt);
\draw[fill=black](0,6.25)circle(15pt);
\draw[fill=black](0,2.5)circle(15pt);
\draw[fill=black](0,-1.25)circle(15pt);
\draw[fill=black](8.7,-5)circle(15pt);
\draw[fill=black](-8.7,-5)circle(15pt);
\node at (0,-11) {$M_1$};
\end{tikzpicture}\qquad\qquad
\begin{tikzpicture}[scale=0.125]
\draw[thick](0,10)--(8.7,-5)--(-8.7,-5)--(0,10);
\draw[thick](0,-2.5)--(2.16,1.25)--(-2.16,1.25)--(0,-2.5)(-8.7,-5)--(-2.16,1.25)--(0,10)(8.7,-5)--(2.16,1.25)--(0,10)(-8.7,-5)--(0,-2.5)--(8.7,-5);
\draw[fill=black](0,-2.5)circle(15pt);
\draw[fill=black](2.16,1.25)circle(15pt);
\draw[fill=black](-2.16,1.25)circle(15pt);
\draw[fill=black](0,10)circle(15pt);
\draw[fill=black](8.7,-5)circle(15pt);
\draw[fill=black](-8.7,-5)circle(15pt);
\node at (0,-11) {$M_2$};
\end{tikzpicture}
\caption{The two $6$-vertex maximal planar graphs.}
\label{bbs}
\end{figure}
Now let $G$ be an $S_{2,2}$-free planar graph on $7$ vertices. If $G$ contains a vertex of degree at most $2$, we are done by induction. Moreover, there is no vertex of degree at least $5$ in $G$. Suppose there is a vertex $x\in V(G)$ such that $d(x)=k$, where $k\geq 5$. In this case, each vertex in $N(x)$ must be of degree at most $2$. Otherwise, it is easy to find an $S_{2,2}$ in $G$, which is a contradiction. 

Assume that $\Delta(G)\leq4$. Let the number of vertices in $G$ with degree at most $3$ be $k$. Hence, $G$ contains at least $n-k$ vertices of degree $4$, which implies $$e(G)\leq\frac{4(n-k)+3k}{2}=2n-\frac{k}{2}.$$ If there are at least $4$ vertices of degree at most $3$, then $e(G)\leq2n-\frac{4}{2}=2n-2$.  Let $v$ be a degree $4$ vertex in $G$. If each vertex in $N(v)$ is of degree at most $3$, then $e(G)\leq 2n-2$. So, there is a vertex $u\in N(v)$, such that $uv$ is a $4$-$4$ edge in $G$. Since $G$ is an $S_{2,2}$-free plane graph, $uv$ must be contained in $3$ triangles. Let $N(u)\cap N(v)=\{x_1,x_2,x_3\}$ and $S=V(G)\backslash\{u,v,x_1,x_2,x_3\}$. Observe that no vertex in $S$ is adjacent to a vertex in $\{x_1,x_2,x_3\}$.  Deleting the vertices $\{u,v,x_1,x_2,x_3\}$.  We deleted at most $3\cdot 5-6=9$ edges.  Therefore, $e(G)=e(G-\{u,v,x_1,x_2,x_3\})+9\leq 2(n-5)-2+9\leq 2n-2$.  Hence, we are done by induction.
\end{proof}

\begin{lemma}\label{mm12}
Let $G$ be an $S_{2,2}$-free plane graph on $n$ ($n\geq 8$) vertices. If there is a vertex with degree at least $5$, then $e(G)\leq 2n-4$.
\end{lemma}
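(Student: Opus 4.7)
The plan is to exploit the hypothesis $d(v)\ge 5$ to force every neighbour of $v$ to have degree at most $2$, and then to separate $G$ into $\{v\}\cup N(v)$ and its complement $T$, applying Lemma~\ref{mm3} to $T$.

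Set $k=d(v)\ge 5$. First I would show that every $u\in N(v)$ has $d(u)\le 2$. If instead $d(u)\ge 3$, let $A=N(v)\setminus\{u\}$ and $B=N(u)\setminus\{v\}$; then $|A|=k-1\ge 4$ and $|B|\ge 2$, so one can pick $Y\subseteq B$ with $|Y|=2$ and, since $|A\setminus Y|\ge 2$, a set $X\subseteq A\setminus Y$ with $|X|=2$. The edge $uv$ together with leaf-sets $X$ and $Y$ forms a copy of $S_{2,2}$, contradicting the hypothesis.

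Now put $T=V(G)\setminus(\{v\}\cup N(v))$, so $|T|=n-k-1$. Summing degrees over $N(v)$ and using the bound just established gives $2\,e(N(v),N(v))+e(N(v),T)\le k$, hence
\[ e(G)\;=\;k+e(N(v),N(v))+e(N(v),T)+e(T)\;\le\;2k+e(T). \]
For $|T|\ge 1$ with $|T|\ne 5$, Lemma~\ref{mm3} (together with the trivial bound $e(T)\le\binom{|T|}{2}$ when $|T|\le 4$) gives $e(T)\le 2|T|-2$, and so $e(G)\le 2k+2(n-k-1)-2=2n-4$. The boundary case $|T|=0$ (which forces $k=n-1\ge 7$) is handled by observing that $e(N(v),T)=0$ now forces $e(N(v),N(v))\le k/2$, so $e(G)\le 3k/2\le 2k-2=2n-4$.

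The only remaining case, and the main obstacle, is $|T|=5$. Here $e(T)\le 3|T|-6=9$ and the naive bound $2k+9=2n-3$ overshoots by one. To deal with it I would argue that if $e(T)=9$, then $T$ is the unique $5$-vertex planar triangulation $K_5-e$, in which every edge $t_it_j$ satisfies $|(N_T(t_i)\cup N_T(t_j))\setminus\{t_i,t_j\}|=3$. If some vertex of $T$ had a neighbour outside $T$, then on a suitably chosen $T$-edge incident to it, the same kind of two-plus-two selection as in the first step (now with $|A\cup B|\ge 4$ and $|A|,|B|\ge 2$) would locate an $S_{2,2}$. Hence $T$ receives no edges from outside, making it a connected component of $G$. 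Using $e(\{v\}\cup N(v))\le k+k/2=3k/2$ then yields
\[ e(G)\;=\;9+e(\{v\}\cup N(v))\;\le\;9+\tfrac{3k}{2}\;\le\;2k+8\;=\;2n-4, \]
since $k\ge 5$. If instead $e(T)\le 8$, the bound $e(G)\le 2k+8=2n-4$ is immediate.
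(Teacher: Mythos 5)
Your argument is correct and takes essentially the same route as the paper's proof: force $d(u)\le 2$ for every $u\in N(v)$, bound the edges meeting $\{v\}\cup N(v)$ by $2k$, apply Lemma~\ref{mm3} to the remaining $n-k-1$ vertices, and handle the exceptional case $|T|=5$ by showing a $K_5^-$ component contributes only $9+\tfrac{3k}{2}$ edges. You are in fact a bit more careful than the paper at the boundary subcases ($|T|=0$, and $e(T)\le 8$ versus $e(T)=9$), which the paper passes over silently.
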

\begin{proof}
Let $x \in V (G)$ such that $d(x) = k\geq 5$. Let $N(x) = \{x_1, x_2, x_3,\dots , x_k\}$, and $S$ be the set of vertices in $V(G)\backslash N(x)$. Each vertex in $N(x)$ is adjacent to at most one other vertex in $N(x)$. Otherwise, it is easy to show that $G$ contains an $S_{2,2}$. Similarly, for an edge $x_ix_j$, where $x_i,x_j\in N(x)$, there is no vertex in $S$ which is adjacent to either $x_i$ or $x_j$. Thus, the number of edges joining a vertex in $N(x)$ and a vertex in $S$ is at most $k$.  If $|S|\neq 5$, using Lemma \ref{mm3}, $e(G[S]) \leq 2(n-k-1)-2.$ Therefore, $e(G) \leq 2(n-k-1)-2+k+k =2n-4$, and we are done.  Let $|S|= 5$. Let the graph induced by $S$ be a $K_5^-$.  Since $G$ is an $S_{2,2}$-free plane graph, no vertex in $N(x)$ is adjacent to any vertex in $S$.  This implies that $e(G[S]) = 9$ and $e(G[N(x)])\leq \frac{k}{2}$. Hence, $e(G)\leq k + \frac{k}{2} + 9$. Since $n = k + 6$, we have $e(G) \leq (n-6) + \frac{n-6}{2}+ 9 = \frac{3n}{2}\leq 2n-4$ for $n\geq 8$.
\end{proof}

\begin{proof}[Proof of Theorem \ref{doublestarsmain}(i)]
The lower bound is attained by considering the graph $K_{2,n-2}$, which is $S_{2,2}$-free and contains $2n-4$ edges. From Lemma~\ref{mm12}, we may assume that $\Delta(G)\leq 4$. Let $k$ be the number of vertices in $G$ whose degree is at most $3$. Then $e(G)\leq \frac{4(n-k)+3k}{2}=2n-\frac{k}{2}$.  If $k\geq 8$, then we are done. We may assume that the number of degree $4$ vertices in $G$ is at least $9$, since $n\geq 16$. We start by proving the following claims:

\begin{claim}\label{mms2}
There is no degree $4$ vertex in $G$ such that all its neighbors are of degree at most $3$.
\end{claim}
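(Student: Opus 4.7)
The plan is to assume for contradiction that a degree-$4$ vertex $v$ with all four neighbors $v_1,v_2,v_3,v_4$ of degree at most $3$ exists, and then show that $\{v\}\cup N(v)$ forms (essentially) an isolated cluster whose removal reduces $G$ to a smaller $S_{2,2}$-free planar graph. Applying the theorem's own bound to that smaller graph will yield $e(G)\le 2n-6<2n-4$; since the surrounding argument is aimed at establishing $e(G)\le 2n-4$, this directly closes the sub-case, so from here on we may assume no such $v$ exists.

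First I would exploit $S_{2,2}$-freeness along each backbone edge $vv_i$. Because $v$ has three neighbors other than $v_i$, any $v_i$ with $d(v_i)=3$ possessing a neighbor $w\notin\{v\}\cup N(v)$ would allow me to pick the two vertices of $\{v_1,v_2,v_3,v_4\}\setminus\{v_i,a\}$, where $a$ is $v_i$'s other non-$v$ neighbor, as additional leaves of $v$, and $\{a,w\}$ as additional leaves of $v_i$, forming an $S_{2,2}$ with backbone $vv_i$. Hence every degree-$3$ member of $N(v)$ has both of its non-$v$ neighbors in $N(v)$.

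Next I would split on the degree sequence. If all four $v_i$ have degree $3$, then $G[N(v)]$ is $2$-regular on four vertices, hence a $4$-cycle, so $G[\{v\}\cup N(v)]\cong W_4$; because every $v_i$'s degree is saturated, this $W_4$ is a connected component of $G$ contributing $8$ edges with no edge leaving. If some $v_i$, say $v_4$, has degree at most $2$, a short degree-sum count inside $G[N(v)]$ (using that every degree-$3$ vertex contributes exactly two edges to $N(v)$ while $d_{G[N(v)]}(v_4)\le 1$) pins down a rigid picture: up to relabeling, the degree-$3$ members of $\{v_1,v_2,v_3\}$ form a clique inside $N(v)$ together with $v$, $v_4$ has no neighbor among $\{v_1,v_2,v_3\}$, and $v_4$ carries at most one edge outside $\{v\}\cup N(v)$. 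In each sub-case the cluster $H$ on $\{v\}\cup N(v)$ has at most $8$ internal edges and at most one edge leaving $V(H)$.

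Finally, I would remove $V(H)$ from $G$ and apply Theorem~\ref{doublestarsmain}(i) inductively to $G-V(H)$, which is a planar $S_{2,2}$-free graph on $n-5\ge 11$ vertices. Induction on $n$ (with base cases for $n-5\le 15$ supplied by Lemma~\ref{mm3}) gives $e(G-V(H))\le 2(n-5)-4$, so
\[
e(G)\le 2(n-5)-4+8=2n-6<2n-4,
\]
which is strictly better than the theorem's target and hence closes this case. The main obstacle is the rigidity analysis of the second step when several of the $v_i$ have degree at most $2$: I need to enumerate those sub-cases and verify every time that the cluster really does have at most one outgoing edge, so that the final inductive count survives.
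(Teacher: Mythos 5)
Your overall strategy is the one the paper uses: delete the closed neighbourhood $N[v]$, bound the number of edges meeting it, and apply the known $S_{2,2}$-free edge bound to what remains. Your first step (every degree-$3$ neighbour of $v$ has both of its non-$v$ neighbours inside $N(v)$) is correct and is exactly the freeness observation the paper relies on. The genuine problem is your intermediate structural claim that the cluster always has \emph{at most one} edge leaving $\{v\}\cup N(v)$: this is false. All four neighbours of $v$ may have degree $2$, each joined to a distinct vertex outside $N[v]$; no $S_{2,2}$ arises there (no $v_i$ has two neighbours besides $v$), and the cluster has four outgoing edges. So the case enumeration you defer to cannot terminate in the picture you describe. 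There is also an arithmetic slip on top of this: even granting your stated structure, it permits $8+1=9$ removed edges, not the $8$ you substitute into the final display; and for $16\le n\le 20$ you cannot invoke Theorem~\ref{doublestarsmain}(i) on $G-N[v]$ (it has fewer than $16$ vertices), so you must fall back on Lemma~\ref{mm3}, whose bound is $2(n-5)-2$, not the $2(n-5)-4$ you write. With $9$ removed edges that gives $2n-3$, which overshoots the target.

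The repair is short and shows the rigidity analysis is unnecessary. From your first step, each degree-$3$ neighbour spends both of its non-spoke edge-endpoints on edges inside $N(v)$, while each neighbour of degree at most $2$ has at most one non-spoke edge in total (internal or outgoing, not both). Writing $|A|$ for the number of degree-$3$ neighbours and $|B|$ for the rest, a degree count gives $e(G[N(v)])+e(N(v),S)\le |A|+|B|=4$, hence at most $4+4=8$ edges meet $\{v\}\cup N(v)$ in every configuration (your $W_4$ component and my four-pendant example both attain $8$). Then $e(G)\le e(G-N[v])+8\le 2(n-5)-2+8=2n-4$ by Lemma~\ref{mm3}, which is all the surrounding argument needs. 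This is precisely the paper's computation.
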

\begin{proof}
Suppose not.  Let $x$ be a degree $4$ vertex in $G$, such that $d(y)\leq 3$ for all $y\in N(x)$.  It is easy to check that for each $y\in N(x)$, if $y$ is adjacent to any vertex in $V(G)\backslash (\{x\}\cup N(x))$, then $d(y)=2$. Otherwise, $G$ contains an $S_{2,2}$.  Therefore, using Lemma~\ref{mm3}, $e(G)=e(G[S])+4+4\leq 2(n-5)-2+8=2n-4$.    Assume that for each $y\in N(x)$, $y$ is not adjacent to any vertex in $V(G)\backslash (\{x\}\cup N(x))$. Then $e(G[x\cup N(x)])\leq 8$. Therefore, using Lemma~\ref{mm3}, $e(G)=e(G[S])+8\leq 2(n-5)-2+8=2n-4.$ 
\end{proof}

\begin{claim}
The number of $4$-$4$ edges in a matching in $G$ is at least $3$.
\end{claim}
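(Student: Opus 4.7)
The plan is to argue by contradiction. Suppose the maximum matching of $4$-$4$ edges has size at most $2$, and derive a contradiction from the standing assumption that $G$ has at least $9$ vertices of degree $4$ (equivalently, $|T|\ge n-7\ge 9$ where $T$ denotes the set of degree-$4$ vertices). Throughout, let $H$ be the spanning subgraph of $G$ whose edges are exactly the $4$-$4$ edges.

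The first ingredient I would establish is a strong ``twin'' property forced by $S_{2,2}$-freeness on every $4$-$4$ edge: if $uv$ is a $4$-$4$ edge, then $N(u)\setminus\{v\}=N(v)\setminus\{u\}$. To see this, set $A=N(u)\setminus\{v\}$ and $B=N(v)\setminus\{u\}$, so $|A|=|B|=3$, and note that an $S_{2,2}$ with backbone $uv$ corresponds to choosing two distinct elements from $A$ and two distinct elements from $B$ with all four vertices pairwise distinct. A quick case analysis on $|A\cup B|\in\{3,4,5,6\}$ shows that such a choice always exists except when $|A\cup B|=3$, i.e.\ when $A=B$.

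The second ingredient uses planarity to bound $\Delta(H)\le 2$. Suppose some $u\in T$ has $d_H(u)\ge 3$, with three degree-$4$ neighbours $v_1,v_2,v_3$ and a fourth neighbour $w$. Applying the twin property to $uv_1$, $uv_2$, $uv_3$ in turn yields $N(v_i)=\{u\}\cup(N(u)\setminus\{v_i\})$, which forces $w$ to be adjacent to each of $v_1,v_2,v_3$. Since $\Delta(G)\le 4$, this gives $d(w)=4$ with $N(w)=\{u,v_1,v_2,v_3\}$, and one checks that $\{u,v_1,v_2,v_3,w\}$ induces a $K_5$, contradicting planarity. Hence $d_H(u)\le 2$ for every $u\in T$, and by Claim~1 we also have $d_H(u)\ge 1$, so $H[T]$ is a disjoint union of paths and cycles covering at least $n-7\ge 9$ vertices.

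Finally, I would finish by a short combinatorial count on $H[T]$. Since the matching number of a path or cycle on $k\ge 2$ vertices is $\lfloor k/2\rfloor$, a disjoint union of paths and cycles with matching number at most $2$ has either a single component on at most $5$ vertices or two components each on at most $3$ vertices; in either case the total number of vertices is at most $6$. Since $|V(H[T])|\ge 9>6$, we obtain the desired contradiction.

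The main obstacle I expect is the second step, namely promoting the local ``twin'' structure at a single $4$-$4$ edge into the global bound $\Delta(H)\le 2$ using planarity; the $K_5$ forced at a degree-$3$ vertex in $H$ is the key observation, and once it is in place the remainder is a routine case analysis on paths and cycles.
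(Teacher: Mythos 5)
Your proof is correct, but it takes a genuinely different route from the paper's. The paper argues directly on the assumed maximum matching of size $2$ (or $1$): each matched $4$-$4$ edge lies in three triangles, the three triangle apexes have all their neighbours confined to the corresponding five-vertex cluster, at most one apex per cluster can have degree $4$, and therefore among the at least $9$ degree-$4$ vertices some $z$ lies outside both clusters; such a $z$ cannot have a degree-$4$ neighbour (that would yield a third independent $4$-$4$ edge), contradicting Claim~\ref{mms2}. You instead prove a global structural statement --- the subgraph $H$ of $4$-$4$ edges has maximum degree $2$, via the twin property $N(u)\setminus\{v\}=N(v)\setminus\{u\}$ and a forced $K_5$ --- and then combine $d_H(u)\ge 1$ for every degree-$4$ vertex (which is exactly Claim~\ref{mms2}) with the elementary matching number $\lfloor k/2\rfloor$ of paths and cycles. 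The two arguments rest on the same three ingredients (the twin/three-triangle property of a $4$-$4$ edge, a $K_5$ forced by too many degree-$4$ neighbours, and Claim~\ref{mms2}), but your packaging buys something concrete: it handles matching sizes $0$, $1$ and $2$ uniformly, whereas the paper writes out only the size-$2$ case and asserts the size-$1$ case is ``similar''; and it makes explicit the planarity step that the paper leaves implicit when claiming that at least two of the three apexes over each matched edge have degree at most $3$. The cost is that your route needs the slightly heavier bookkeeping of classifying $H[T]$ into paths and cycles, where the paper's localization around the two matched edges gets to the contradiction in one step.
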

\begin{proof}
Suppose not. Let the number of $4$-$4$ edges in a matching in $G$ be $2$.  Denote the $4$-$4$ edges in the matching by $uv$ and $xy$. Each of the edges is contained in $3$ triangles. Let $N(u)\cap N(v)=\{u_1,u_2,u_3\}$ and $N(x)\cap N(y)=\{x_1,x_2,x_3\}$. Since $G$ is an $S_{2,2}$-free plane graph, no vertex in $\{u_1,u_2,u_3\}$ is adjacent to a vertex in $V(G)\backslash \{x,y,x_1,x_2,x_3\}$ and no vertex in $\{x_1,x_2,x_3\}$ is adjacent to a vertex in $V(G)\backslash \{u,v,u_1,u_2,u_3\}$. Moreover, at least two vertices in $\{x_1,x_2,x_3\}$ and in $\{u_1,u_2,u_3\}$ are of degree at most three. The number of degree $4$ vertices in $G$ is at least $9$. Thus, there is a degree $4$ vertex, say $z$,  in $V(G)\backslash \{x,y,u,v,x_1,x_2,x_3,u_1,u_2,u_3\}$ such that $d(t)\leq 3$ for each $t\in N(z)$. This contradicts Claim~\ref{mms2}. A similar argument can be given, if we assume that the number of $4$-$4$ edges in a matching in $G$ is $1$.
\end{proof}

From now on, suppose that the number of $4$-$4$ edges in a matching in $G$ is at least $3$. We distinguish the following two cases:

\textbf{Case $1$: The number of $4$-$4$ edges in a matching in $G$ is at least $4$.}
Denote the $4$-$4$ edges in the matching in $G$ by $x_1x_2$, $x_3x_4$, $x_5x_6$ and $x_7x_8$, respectively. Recall that each edge is contained in $3$ triangles. Moreover, at least two vertices in $N(x_i)\cap N(x_{i+1})$ are of degree at most $3$, for each $i\in\{1,3,5,7\}$. Thus, we have at least $8$ vertices in $G$ whose degree is at most $3$. Hence, we are done in this case.

\textbf{Case $2$: The number of $4$-$4$ edges in a  matching in $G$ is $3$.}
Denote the $4$-$4$ edges in the matching in $G$ by $x_1x_2, x_3x_4$ and $x_5x_6$, respectively. At least two vertices in $N(x_i)\cap N(x_{i+1})$ are of degree at most $3$, for $i\in\{1,3,5\}$. This implies that $G$ contains at least $6$ vertices whose degree is at most $3$. Moreover, if a vertex in $N(x_i)\cap N(x_{i+1})$ is of degree at most $2$, for some $i\in\{1,3,5\}$, then the remaining two vertices are of degree at most $3$.  Observe that in this case, $e(G)\leq\frac{4(n-7)+3\cdot6+2}{2}=2n-4$ and we are done.

So, we assume exactly two vertices in $N(x_i)\cap N(x_{i+1})$ are of degree $3$, for each $i\in\{1,3,5\}$. In this case, the remaining vertex in $N(x_i)\cap N(x_{i+1})$ is of degree $4$. Thus, the vertices $\{x_i,x_{i+1}\}\cup \bigg(N(x_i)\cap N(x_{i+1})\bigg)$, for each $i\in\{1,3,5\}$, induce a $K_5^-$, and it is a component in $G$.  If $n=16$, then there is an isolated vertex. In this case, $e(G)=27<2n-4.$ If $n>17$, it is easy to find $2$ more vertices of degree at most $3$ since the number of $4$-$4$ edges in a matching in $G$ is $3$. Thus, there are at least $8$ degree $3$ vertices in $G$.  This completes the proof of Theorem~\ref{doublestarsmain}(i).  
\end{proof}

\section{Planar Tur\'an number of \texorpdfstring{$S_{2,3}$}{S2,3}}
\begin{proof}[Proof of theorem \ref{doublestarsmain}(ii)]
Let $G$ be an $S_{2,3}$-free plane graph on $n$ vertices.  Since the graph $S_{2,3}$ contains $7$ vertices, a maximal planar graph with $n\leq6$  vertices does not contain an $S_{2,4}$.  Thus, the lower bound is attained by considering disjoint copies of the maximum planar graphs on $6$ vertices, i.e., $M_1$ or $M_2$ (see Figure \ref{bbs}.  If the maximum degree in $G$ is at most $4$, then $e(G)\leq 2n$. Now we separate the rest of the proof into $2$ cases:

\textbf{Case $1$: There exists a vertex $v\in V(G)$, such that $d(v)\geq 6$.}
It is easy to check that for each $u\in N(v)$, $d(u)\leq 2$, otherwise, we find a copy of $S_{2,3}$ in $G$. Delete a vertex $u\in N(v)$, then the number of deleted edges is at most $2$. By the induction hypothesis, we get $e(G-\{u\})\leq 2(n-1)$. Hence, $e(G)=e(G-\{u\})+d(u)\leq 2(n-1)+2=2n$.

\textbf{Case $2$: There exists a vertex $v\in V(G)$, such that $d(v)=5$.}
It is easy to check that for each $u\in N(v)$, if $u$ is adjacent to any vertex in $V(G)\backslash (\{v\}\cup N(v))$, then $d(u)=2$. Otherwise, $G$ contains an $S_{2,3}$.  As in Case $1$, we are done by induction.  Assume that for each $u\in N(v)$, $u$ is not adjacent to any vertex in $V(G)\backslash (\{v\}\cup N(v))$. Then $e(G[v\cup N(v)])\leq 3\cdot 6-6=12$. By the induction hypothesis, $e(G-\{v\cup N(v)\})\leq 2(n-6)$. Therefore, $e(G)=e(G-\{v\cup N(v)\})+e(G[v\cup N(v)])\leq 2n$.
\end{proof}

\section{Planar Tur\'an number of \texorpdfstring{$S_{2,4}$}{S2,4}}
Let $G$ be an $S_{2,4}$-free plane graph on $n$ vertices.  Since $S_{2,4}$ contains $8$ vertices, a maximal planar graph with $n\leq 7$ vertices, does not contain an $S_{2,4}$.  Let $7|n$. Consider the plane graph consisting of $\frac{n}{7}$ disjoint copies of maximal planar graphs on $7$ vertices.  This graph does not contain an $S_{2,4}$. Hence, $\text{ex}_{\mathcal{P}}(n,S_{2,4})\geq\frac{15}{7}n$.

\begin{claim}
Let $G$ be an $S_{2,4}$ on $n$ $(1\leq n\leq 18)$ vertices.  The number of edges in $G$ is at most $\frac{8}{3}n$.
\end{claim}
\begin{proof}
Recall that, an $n$-vertex maximal planar graph contains $3n-6$ edges. Since $3n-6\leq \frac{8}{3}n$ for $n\leq 18$,  $e(G)\leq \frac{8}{3}n$ holds for all $n$, $1\leq n\leq 18$.
\end{proof}

\begin{lemma}\label{deg7_S_2,4}
If $G$ contains a vertex of degree greater than or equal to $7$, then $e(G)\leq \frac{8}{3}n.$
\end{lemma}
\begin{proof}
Let $v\in V(G)$, such that $d(v)\geq 7$.  It is easy to check that for each $u\in N(v)$, $d(u)\leq 2$, otherwise, we find a copy of $S_{2,4}$ in $G$. Delete a vertex $u\in N(v)$, then the number of deleted edges is at most $2$. By the induction hypothesis, we get $e(G-\{u\})\leq \frac{8}{3}(n-1)$. Hence, $e(G)=e(G-\{u\})+d(u)\leq \frac{8}{3}(n-1)+2\leq\frac{8}{3}n$.
\end{proof}

\begin{lemma}\label{deg6_S_2,4}
If $G$ contains a vertex of degree $6$, then $e(G)\leq \frac{8}{3}n.$
\end{lemma}
\begin{proof}
Let $v\in V(G)$, such that $d(v)=6$ and let $H=N(v)\cup \{v\}$.  It is easy to check that for each $u\in N(v)$, if $u$ is adjacent to any vertex in $V(G)\backslash H$, then $d(u)=2$. Otherwise, $G$ contains an $S_{2,4}$.  We are done by induction in this case.  Assume that for each $u\in N(v)$, $u$ is not adjacent to any vertex in $V(G)\backslash H$. Then $e(G[H])\leq 3\cdot 7-6=15$. Hence, $e(G)=e(G-H)+e(G[H])\leq \frac{8}{3}(n-7)+15\leq\frac{8}{3}n$. We are done by induction. 
\end{proof}

\begin{proof}[Proof of Theorem \ref{doublestarsmain}(iii)]
If $G$ contains a vertex of degree at least $6$, we are done by Lemmas \ref{deg7_S_2,4} and \ref{deg6_S_2,4}. Hence, we can assume that $\Delta(G)\leq 5$.
\begin{figure}[ht]
\centering
    \begin{tikzpicture}[scale=0.1]
            \coordinate (x) at (-12,0);
            \coordinate (y) at (12,0);
            \coordinate (a) at (0,27);
            \coordinate (b) at (0,18);
            \coordinate (c) at (0,9);
            \coordinate (d) at (0,-9);
            \draw[thick] (x) -- (a) -- (y) -- (x);
            \draw[thick] (x) -- (b) -- (y);
            \draw[thick] (x) -- (c) -- (y);
            \draw[thick] (x) -- (d) -- (y);
            \draw[fill=black] (x) circle(\circ)  node[label=below:$x$] {};
            \draw[fill=black] (y) circle(\circ)  node[label=below:$y$] {};
            \draw[fill=black] (a) circle(\circ)  node[label=below:$a$] {};
            \draw[fill=black] (b) circle(\circ)  node[label=below:$b$] {};
            \draw[fill=black] (c) circle(\circ)  node[label=below:$c$] {};
            \draw[fill=black] (d) circle(\circ)  node[label=above:$d$] {};
    \end{tikzpicture}
    \caption{The graph $G$ has a $5-5$ edge $xy$, with $4$ triangles sitting on the edge $xy$.}
    \label{fig_s_2,4_5,5,4triangles}
\end{figure}
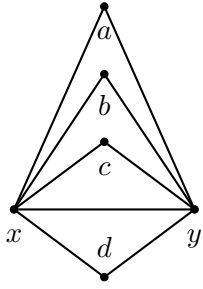
\begin{figure}
\centering
    \begin{tikzpicture}[scale=0.1]
            \coordinate (x) at (-12,0);
            \coordinate (y) at (12,0);
            \coordinate (a) at (0,18);
            \coordinate (b) at (0,9);
            \coordinate (c) at (0,-9);
            \coordinate (d) at (-21,0);
            \coordinate (e) at (21,0);
            \draw[thick] (x) -- (a) -- (y) -- (x);
            \draw[thick] (x) -- (b) -- (y);
            \draw[thick] (x) -- (c) -- (y);
            \draw[thick] (x) -- (d) -- (y);
            \draw[thick] (x) -- (d);
            \draw[thick] (e) -- (y);
            \draw[fill=black] (x) circle(\circ)  node[label=below:$x$] {};
            \draw[fill=black] (y) circle(\circ)  node[label=below:$y$] {};
            \draw[fill=black] (a) circle(\circ)  node[label=below:$a$] {};
            \draw[fill=black] (b) circle(\circ)  node[label=below:$b$] {};
            \draw[fill=black] (c) circle(\circ)  node[label=below:$c$] {};
            \draw[fill=black] (d) circle(\circ)  node[label=below:$d$] {};
            \draw[fill=black] (e) circle(\circ)  node[label=below:$e$] {};
            \node at (0,-21){(i)};
    \end{tikzpicture}
    \qquad
    \begin{tikzpicture}[scale=0.1]
            \coordinate (x) at (-12,0);
            \coordinate (y) at (12,0);
            \coordinate (a) at (0,18);
            \coordinate (b) at (0,9);
            \coordinate (c) at (0,-9);
            \coordinate (d) at (-21,0);
            \coordinate (e) at (21,0);
            \draw[thick] (x) -- (a) -- (y) -- (x);
            \draw[thick] (x) -- (b) -- (y);
            \draw[thick] (x) -- (c) -- (y);
            \draw[thick] (x) -- (d) -- (y);
            \draw[thick] (x) -- (d);
            \draw[thick] (e) -- (y);
            \draw[thick] (d) -- (a);
            \draw[fill=black] (x) circle(\circ)  node[label=below:$x$] {};
            \draw[fill=black] (y) circle(\circ)  node[label=below:$y$] {};
            \draw[fill=black] (a) circle(\circ)  node[label=below:$a$] {};
            \draw[fill=black] (b) circle(\circ)  node[label=below:$b$] {};
            \draw[fill=black] (c) circle(\circ)  node[label=below:$c$] {};
            \draw[fill=black] (d) circle(\circ)  node[label=below:$d$] {};
            \draw[fill=black] (e) circle(\circ)  node[label=below:$e$] {};
            \node at (0,-21){(ii)};
    \end{tikzpicture}
    \qquad
    \begin{tikzpicture}[scale=0.1]
            \coordinate (x) at (-12,0);
            \coordinate (y) at (12,0);
            \coordinate (a) at (0,18);
            \coordinate (b) at (0,9);
            \coordinate (c) at (0,-9);
            \coordinate (d) at (-21,0);
            \coordinate (e) at (21,0);
            \draw[thick] (x) -- (a) -- (y) -- (x);
            \draw[thick] (x) -- (b) -- (y);
            \draw[thick] (x) -- (c) -- (y);
            \draw[thick] (x) -- (d) -- (y);
            \draw[thick] (x) -- (d);
            \draw[thick] (e) -- (y);
            \draw[thick] (d) -- (a);
            \draw[thick] (e) -- (a);
            \draw[fill=black] (x) circle(\circ)  node[label=below:$x$] {};
            \draw[fill=black] (y) circle(\circ)  node[label=below:$y$] {};
            \draw[fill=black] (a) circle(\circ)  node[label=below:$a$] {};
            \draw[fill=black] (b) circle(\circ)  node[label=below:$b$] {};
            \draw[fill=black] (c) circle(\circ)  node[label=below:$c$] {};
            \draw[fill=black] (d) circle(\circ)  node[label=below:$d$] {};
            \draw[fill=black] (e) circle(\circ)  node[label=below:$e$] {};
            \node at (0,-21){(iii)};
    \end{tikzpicture}
    \qquad
    \begin{tikzpicture}[scale=0.1]
            \coordinate (x) at (-12,0);
            \coordinate (y) at (12,0);
            \coordinate (a) at (0,18);
            \coordinate (b) at (0,9);
            \coordinate (c) at (0,-9);
            \coordinate (d) at (-21,0);
            \coordinate (e) at (21,0);
            \draw[thick] (x) -- (a) -- (y) -- (x);
            \draw[thick] (x) -- (b) -- (y);
            \draw[thick] (x) -- (c) -- (y);
            \draw[thick] (x) -- (d) -- (y);
            \draw[thick] (x) -- (d);
            \draw[thick] (e) -- (y);
            \draw[thick] (d) -- (a);
            \draw[thick] (e) -- (c);
            \draw[fill=black] (x) circle(\circ)  node[label=below:$x$] {};
            \draw[fill=black] (y) circle(\circ)  node[label=above:$y$] {};
            \draw[fill=black] (a) circle(\circ)  node[label=below:$a$] {};
            \draw[fill=black] (b) circle(\circ)  node[label=below:$b$] {};
            \draw[fill=black] (c) circle(\circ)  node[label=below:$c$] {};
            \draw[fill=black] (d) circle(\circ)  node[label=below:$d$] {};
            \draw[fill=black] (e) circle(\circ)  node[label=below:$e$] {};
            \node at (0,-21){(iv)};
    \end{tikzpicture}
    \qquad
    \begin{tikzpicture}[scale=0.1]
            \coordinate (x) at (-12,0);
            \coordinate (y) at (12,0);
            \coordinate (a) at (0,18);
            \coordinate (b) at (0,9);
            \coordinate (c) at (0,-9);
            \coordinate (d) at (-21,0);
            \coordinate (e) at (21,0);
            \draw[thick] (x) -- (a) -- (y) -- (x);
            \draw[thick] (x) -- (b) -- (y);
            \draw[thick] (x) -- (c) -- (y);
            \draw[thick] (x) -- (d) -- (y);
            \draw[thick] (x) -- (d);
            \draw[thick] (e) -- (y);
            \draw[thick] (d) -- (a);
            \draw[thick] (d) -- (c);
            \draw[fill=black] (x) circle(\circ)  node[label=above:$x$] {};
            \draw[fill=black] (y) circle(\circ)  node[label=above:$y$] {};
            \draw[fill=black] (a) circle(\circ)  node[label=below:$a$] {};
            \draw[fill=black] (b) circle(\circ)  node[label=below:$b$] {};
            \draw[fill=black] (c) circle(\circ)  node[label=below:$c$] {};
            \draw[fill=black] (d) circle(\circ)  node[label=below:$d$] {};
            \draw[fill=black] (e) circle(\circ)  node[label=below:$e$] {};
            \node at (0,-21){(v)};
    \end{tikzpicture}
    \qquad
    \begin{tikzpicture}[scale=0.1]
            \coordinate (x) at (-12,0);
            \coordinate (y) at (12,0);
            \coordinate (a) at (0,18);
            \coordinate (b) at (0,9);
            \coordinate (c) at (0,-9);
            \coordinate (d) at (-21,0);
            \coordinate (e) at (21,0);
            \draw[thick] (x) -- (a) -- (y) -- (x);
            \draw[thick] (x) -- (b) -- (y);
            \draw[thick] (x) -- (c) -- (y);
            \draw[thick] (x) -- (d) -- (y);
            \draw[thick] (x) -- (d);
            \draw[thick] (e) -- (y);
            \draw[thick] (d) -- (a);
            \draw[thick] (d) -- (c);
            \draw[thick] (e) -- (c);
            
            \draw[fill=black] (x) circle(\circ)  node[label=above:$x$] {};
            \draw[fill=black] (y) circle(\circ)  node[label=above:$y$] {};
            \draw[fill=black] (a) circle(\circ)  node[label=below:$a$] {};
            \draw[fill=black] (b) circle(\circ)  node[label=below:$b$] {};
            \draw[fill=black] (c) circle(\circ)  node[label=below:$c$] {};
            \draw[fill=black] (d) circle(\circ)  node[label=below:$d$] {};
            \draw[fill=black] (e) circle(\circ)  node[label=below:$e$] {};
            \node at (0,-21){(vi)};
    \end{tikzpicture}
    \qquad
    \begin{tikzpicture}[scale=0.1]
            \coordinate (x) at (-12,0);
            \coordinate (y) at (12,0);
            \coordinate (a) at (0,18);
            \coordinate (b) at (0,9);
            \coordinate (c) at (0,-9);
            \coordinate (d) at (-21,0);
            \coordinate (e) at (21,0);
            \draw[thick] (x) -- (a) -- (y) -- (x);
            \draw[thick] (x) -- (b) -- (y);
            \draw[thick] (x) -- (c) -- (y);
            \draw[thick] (x) -- (d) -- (y);
            \draw[thick] (x) -- (d);
            \draw[thick] (e) -- (y);
            \draw[thick] (d) -- (a);
            \draw[thick] (d) -- (c);
            \draw[thick] (e)..controls (27,0) and (27,9) ..(b);
            
            \draw[fill=black] (x) circle(\circ)  node[label=above:$x$] {};
            \draw[fill=black] (y) circle(\circ)  node[label=above:$y$] {};
            \draw[fill=black] (a) circle(\circ)  node[label=below:$a$] {};
            \draw[fill=black] (b) circle(\circ)  node[label=below:$b$] {};
            \draw[fill=black] (c) circle(\circ)  node[label=below:$c$] {};
            \draw[fill=black] (d) circle(\circ)  node[label=below:$d$] {};
            \draw[fill=black] (e) circle(\circ)  node[label=below:$e$] {};
            \node at (0,-21){(vii)};
    \end{tikzpicture}
    \qquad
    \begin{tikzpicture}[scale=0.1]
            \coordinate (x) at (-12,0);
            \coordinate (y) at (12,0);
            \coordinate (a) at (0,18);
            \coordinate (b) at (0,9);
            \coordinate (c) at (0,-9);
            \coordinate (d) at (-21,0);
            \coordinate (e) at (21,0);
            \draw[thick] (x) -- (a) -- (y) -- (x);
            \draw[thick] (x) -- (b) -- (y);
            \draw[thick] (x) -- (c) -- (y);
            \draw[thick] (x) -- (d) -- (y);
            \draw[thick] (x) -- (d);
            \draw[thick] (e) -- (y);
            \draw[thick] (d) -- (a);
            \draw[thick] (d) -- (c);
            \draw[thick] (e) -- (c);
            \draw[thick] (e) -- (a);
            
            \draw[fill=black] (x) circle(\circ)  node[label=above:$x$] {};
            \draw[fill=black] (y) circle(\circ)  node[label=above:$y$] {};
            \draw[fill=black] (a) circle(\circ)  node[label=below:$a$] {};
            \draw[fill=black] (b) circle(\circ)  node[label=below:$b$] {};
            \draw[fill=black] (c) circle(\circ)  node[label=below:$c$] {};
            \draw[fill=black] (d) circle(\circ)  node[label=below:$d$] {};
            \draw[fill=black] (e) circle(\circ)  node[label=below:$e$] {};
            \node at (0,-21){(viii)};
    \end{tikzpicture}
    \qquad
    \begin{tikzpicture}[scale=0.1]
            \coordinate (x) at (-12,0);
            \coordinate (y) at (12,0);
            \coordinate (a) at (0,18);
            \coordinate (b) at (0,9);
            \coordinate (c) at (0,-9);
            \coordinate (d) at (-21,0);
            \coordinate (e) at (21,0);
            \draw[thick] (x) -- (a) -- (y) -- (x);
            \draw[thick] (x) -- (b) -- (y);
            \draw[thick] (x) -- (c) -- (y);
            \draw[thick] (x) -- (d) -- (y);
            \draw[thick] (x) -- (d);
            \draw[thick] (e) -- (y);
            \draw[thick] (d) -- (a);
            \draw[thick] (d) -- (c);
            \draw[thick] (e) -- (a);
            \draw[thick] (e)..controls (27,0) and (27,9) ..(b);
            \draw[fill=black] (x) circle(\circ)  node[label=above:$x$] {};
            \draw[fill=black] (y) circle(\circ)  node[label=above:$y$] {};
            \draw[fill=black] (a) circle(\circ)  node[label=below:$a$] {};
            \draw[fill=black] (b) circle(\circ)  node[label=below:$b$] {};
            \draw[fill=black] (c) circle(\circ)  node[label=below:$c$] {};
            \draw[fill=black] (d) circle(\circ)  node[label=below:$d$] {};
            \draw[fill=black] (e) circle(\circ)  node[label=below:$e$] {};
            \node at (0,-21){(ix)};
    \end{tikzpicture}
\captionsetup{singlelinecheck=off}
\caption[foo bar2]{The graph $G$ has a $5-5$ edge $xy$, with $3$ triangles sitting on the edge $xy$.
    \begin{enumerate}[label=(\roman*)]
        \item The vertices $d$ and $e$ have no neighbors in $S_1$.
        \item The vertex $d$ has one neighbor in $S_1$, and $e$ has none.
        \item The vertices $d$ and $e$ have one common neighbor in $S_1$.
        \item The vertices $d$ and $e$ have one distinct neighbor in $S_1$.
        \item The vertex $d$ has two neighbors in $S_1$, while $e$ has none.
        \item The vertex $d$ is the neighbor of $a$ and $c$, and while $e$ is the neighbor of $c$.
        \item The vertex $d$ is the neighbor of $a$ and $c$, while $e$ is the neighbor of $b$.
        \item The vertices $d$ and $e$ are neighbors of $a$ and $c$ both.  
        \item The vertex $d$ is the neighbor of $a$ and $c$, while $e$ is the neighbor of $a$ and $b$.
    \end{enumerate}
}
\label{fig_s_2,4_5,5}
\end{figure}
\begin{claim}
If $G$ contains a $5-5$ edge, then $e(G)\leq \frac{8}{3}n.$
\end{claim}
\begin{proof}
Let $xy\in E(G)$ be a $5-5$ edge.  There are at least $3$ triangles sitting on the edge $xy$, otherwise $G$ contains an $S_{2,4}$.  We subdivide the cases based on the number of triangles sitting on the edge $xy$.
\begin{enumerate}
    \item \textbf{There are $4$ triangles sitting on the edge $xy$.} Let $a,b,c$ and $d$ be the vertices in $G$ which are adjacent to both $x$ and $y$ (see Figure \ref{fig_s_2,4_5,5,4triangles}).  Let $S_1=\{a,b,c,d\}$ and $H=\{x,y,a,b,c,d\}$.  Delete the vertices in $H$. The vertices in $S_1$ can have at most one neighbor in $V (G)\backslash H$ each and can form a path of length $3$ in $S_1$. Hence, the number of edges be deleted is at most $9 + 4 + 3 = 16$. Using the induction hypothesis, $e(G) \leq e(G-H) + 16 \leq \frac{8}{3}(n-6)+16=\frac{8n}{3}$.
    
    \item \textbf{There are $3$ triangles sitting on the edge $xy$.}  Let $a,b$ and $c$ be the vertices in $G$ which are adjacent to both $x$ and $y$. Let $d$ be the vertex adjacent to $x$ but not adjacent to $y$, and $e$ be the vertex adjacent to $y$ but not adjacent to $x$.   Let $S_1=\{a,b,c\}$ and $H=\{x,y, d, e\}\cup S_1$.  Delete the vertices in $H$.  The vertices $d$ and $e$ can have at most one neighbor in $V(G)\backslash H$ each.  We distinguish the cases as follows:
     
     \begin{enumerate}
         \item \textbf{The vertices $d$ and $e$ have no neighbors in $S_1$}, see Figure \ref{fig_s_2,4_5,5}(i).  The vertices in $S_1$ can have at most one neighbor in $V(G)\backslash H$ each and can form a path of length $2$ in $S_1$.  If the vertices $d$ and $e$ are adjacent, they cannot have a neighbor in $V(G)\backslash H$.  Otherwise, we have an $S_{2,4}$ with $dx$ (or $ey$) as the backbone.  Thus, the number of edges deleted is at most $9+2+3+2=16$. 
         
         \item \textbf{One of the vertices $d$ or $e$ has one neighbor in $S_1$, and the other has none.}
         Without loss of generality, suppose $a$ is the neighbor of $d$, see Figure \ref{fig_s_2,4_5,5}(ii).  Note that $a$ cannot have a neighbor in $V(G)\backslash H$.  Otherwise, we have an $S_{2,4}$ with $ay$ as the backbone.  If the vertices $d$ and $e$ are adjacent, they cannot have a neighbor in $V(G)\backslash H$.  Similarly, as before, the vertices $b$ and $c$ can have at most one neighbor in $V(G)\backslash H$ each and the vertices $\{a,b,c\}$ can form a path of length $2$ in $S_1$.  Thus, the number of edges deleted is at most $10+2+2+2=16$.  
         
         \item \textbf{The vertices $d$ and $e$ have one neighbor in $S_1$.}
         There are two possibilities. In the first case, without loss of generality, suppose $a$ is the common neighbor of $d$ and $e$, see Figure \ref{fig_s_2,4_5,5}(iii).  Similarly, as before, $a$ cannot have a neighbor in $V(G)\backslash H$.  If the vertices $d$ and $e$ are adjacent, they cannot have a neighbor in $V(G)\backslash H$.  The vertices $b$ and $c$ can have at most one neighbor in $V(G)\backslash H$ each, and the vertices $\{a,b,c\}$ can form a path of length $2$ in $S_1$.  Thus, the number of edges deleted is at most $11+2+2+2=17$.
         
         Without loss of generality, suppose $a$ is the neighbor of $d$ while $c$ is the neighbor of $e$, see Figure \ref{fig_s_2,4_5,5}(iv).  Similarly, as before, the vertices $a$ and $c$ cannot have a neighbor in $V(G)\backslash H$.  The vertex $b$ can have at most one neighbor in $V(G)\backslash H$ and the vertices $\{a,b,c\}$ can form a path of length $2$ in $S_1$.  Thus, the number of edges deleted is at most $11+2+1+2=16$. (If the vertices $d$ and $e$ are adjacent, there can only be a path of length $1$ inside $S_1$. Again, this precision is unnecessary. We skip this in the following cases also.)
         
         \item \textbf{One of the vertices $d$ or $e$ has two neighbors in $S_1$, while the other has none.}
         Without loss of generality, suppose $d$ is the neighbor of $a$ and $c$, see Figure \ref{fig_s_2,4_5,5}(v).  Similarly, as before, the vertices $a$ and $c$ cannot have a neighbor in $V(G)\backslash H$.  The vertex $b$ can have at most one neighbor in $V(G)\backslash H$ and the vertices $\{a,b,c\}$ can form a path of length $2$ in $S_1$.  If the vertices $d$ and $e$ are adjacent, they cannot have a neighbor in $V(G)\backslash H$.  Thus, the number of edges deleted is at most $11+2+1+2=16$.
         
         \item \textbf{One of the vertices $d$ or $e$ has two neighbors in $S_1$, while the other has one neighbor.}
         There are two possibilities. In the first case, without loss of generality, suppose $d$ is the neighbor of $a$ and $c$, and $e$ is the neighbor of $c$ (see Figure \ref{fig_s_2,4_5,5}(vi)).  Similarly, as before, the vertices $a$ and $c$ cannot have a neighbor in $V(G)\backslash H$.  The vertex $b$ can have at most one neighbor in $V(G)\backslash H$ and the vertices $\{a,b,c\}$ can form a path of length $2$ in $S_1$.  If the vertices $d$ and $e$ are adjacent, they cannot have a neighbor in $V(G)\backslash H$. Thus, the number of edges deleted is at most $12+2+1+2=17$.
         
         In the other case, without loss of generality, assume that $d$ is the neighbor of $a$ and $c$, and $e$ is the neighbor of $b$ (see Figure \ref{fig_s_2,4_5,5}(vii)).  The vertices $a,b$ and $c$ cannot have a neighbor in $V(G)\backslash H$, but they can form a path of length $2$ in $S_1$. Thus, the number of edges deleted is at most $12+2+2=16$.
         
         \item \textbf{Both the vertices $d$ and $e$ have two neighbors in $S_1$.}  There are two possibilities. In the first case, without loss of generality, suppose $d$ and $e$ are the neighbors of $a$ and $c$ both (see Figure \ref{fig_s_2,4_5,5}(viii)).  Similarly, as before, the vertices $a$ and $c$ cannot have a neighbor in $V(G)\backslash H$.  The vertex $b$ can have one neighbor in $V(G)\backslash H$.  The vertices $\{a,b,c\}$ can form a path of length $2$ in $S_1$.  If the vertices $d$ and $e$ are adjacent, they cannot have a neighbor in $V(G)\backslash H$.  Thus, the number of edges deleted is at most $13+1+2+2=18$.
         
         On the other hand, without loss of generality, assume $d$ is the neighbor of $a$ and $c$, while $e$ is the neighbor of $a$ and $b$ (see Figure \ref{fig_s_2,4_5,5}(ix)).  The vertices $a,b$ and $c$ cannot have a neighbor in $V(G)\backslash H$, but they can form a path of length $2$ in $S_1$.  Thus, the  number of edges deleted is at most $13+2+2=17$.  
         
     \end{enumerate}
\end{enumerate}

Using the induction hypothesis, $$e(G)\leq e(G-H)+18\leq \frac{8}{3}(n-7)+18=\frac{8}{3}n.$$  This completes the proof.
\end{proof}
Take $x,y\in V(G)$.  By the previous claims, if $d(x)+d(y)\geq 10$, we are done by induction.  Assume that $d(x)+d(y)\leq 9$.  Summing it over all the edge pairs in $G$, we have $9e\geq \sum_{xy\in E(G)}\left(d(x)+d(y)\right)=\sum_{x\in V(G)}(d(x))^2\geq n\overline{d}^2=n(\frac{2e}{n})^2$, where $\overline{d}$ is the average degree in $G$.  This gives us $e\leq \frac{9}{4}n\leq\frac{8}{3}n$ for $n\geq 1$.  
\end{proof}

\section{Planar Tur\'an number of \texorpdfstring{$S_{2,5}$}{S2,5}}
Let $G$ be an $S_{2,5}$-free plane graph on $n$ vertices.  Let $12|n$. Consider the plane graph consisting of $\frac{n}{12}$ disjoint copies of $5$-regular maximal planar graphs with $12$ vertices, see Figure \ref{c3}.  This graph does not contain an $S_{2,5}$, since it is a $5-$regular graph. Hence, $\text{ex}_{\mathcal{P}}(n,S_{2,4})\geq\frac{5}{2}n$.

\begin{claim}
Let $G$ be an $S_{2,5}$ on $n$ $(1\leq n\leq 42)$ vertices.  The number of edges in $G$ is at most $\frac{20}{7}n$.
\end{claim}
\begin{proof}
Recall that, an $n$-vertex maximal planar graph contains $3n-6$ edges. Since $3n-6\leq \frac{20}{7}n$ for $n\leq 42$,  we get $e(G)\leq \frac{20}{7}n$ holds for all $n$, when $1\leq n\leq 42$.
\end{proof}

\begin{figure}[ht]
\centering
\begin{tikzpicture}[scale=0.04]
\draw[ thick] (0,0) circle (40cm);
\draw[fill=black](0, 40)circle(50pt);
\draw[fill=black](-35,-20)circle(50pt);
\draw[fill=black](35,-20)circle(50pt);
\draw[fill=black](0, 26)circle(50pt);
\draw[fill=black](0,-26)circle(50pt);
\draw[fill=black](23,13)circle(50pt);
\draw[fill=black](-23,13)circle(50pt);
\draw[fill=black](-23,-13)circle(50pt);
\draw[fill=black](23,-13)circle(50pt);
\draw[fill=black](0, -13)circle(50pt);
\draw[fill=black](11,6.5)circle(50pt);
\draw[fill=black](-11,6.5)circle(50pt);
\draw[ thick](0,40)--(23,13)--(35,-20)--(0,-26)--(-35,-20)--(-23,13)--(0,40)(0,26)--(23,13)--(23,-13)--(0,-26)--(-23,-13)--(-23,13)--(0,26)(0,40)--(0,26)(35,-20)--(23,-13)(-35,-20)--(-23,-13)(0,-13)--(11,6.5)--(-11,6.5)--(0,-13)(11,6.5)--(23,13)(-11,6.5)--(-23,13)(0,-13)--(0,-26)(0,26)--(11,6.5)(0,26)--(-11,6.5)(0,-13)--(23,-13)--(11,6.5)(0,-13)--(-23,-13)--(-11,6.5);
\end{tikzpicture}\qquad
\begin{tikzpicture}[scale=0.04]
\draw[ thick] (0,0) circle (40cm);
\draw[fill=black](0, 40)circle(50pt);
\draw[fill=black](-35,-20)circle(50pt);
\draw[fill=black](35,-20)circle(50pt);
\draw[fill=black](0, 26)circle(50pt);
\draw[fill=black](0,-26)circle(50pt);
\draw[fill=black](23,13)circle(50pt);
\draw[fill=black](-23,13)circle(50pt);
\draw[fill=black](-23,-13)circle(50pt);
\draw[fill=black](23,-13)circle(50pt);
\draw[fill=black](0, -13)circle(50pt);
\draw[fill=black](11,6.5)circle(50pt);
\draw[fill=black](-11,6.5)circle(50pt);
\draw[ thick](0,40)--(23,13)--(35,-20)--(0,-26)--(-35,-20)--(-23,13)--(0,40)(0,26)--(23,13)--(23,-13)--(0,-26)--(-23,-13)--(-23,13)--(0,26)(0,40)--(0,26)(35,-20)--(23,-13)(-35,-20)--(-23,-13)(0,-13)--(11,6.5)--(-11,6.5)--(0,-13)(11,6.5)--(23,13)(-11,6.5)--(-23,13)(0,-13)--(0,-26)(0,26)--(11,6.5)(0,26)--(-11,6.5)(0,-13)--(23,-13)--(11,6.5)(0,-13)--(-23,-13)--(-11,6.5);
\end{tikzpicture}\qquad
\begin{tikzpicture}[scale=0.04]
\draw[ thick] (0,0) circle (40cm);
\draw[fill=black](0, 40)circle(50pt);
\draw[fill=black](-35,-20)circle(50pt);
\draw[fill=black](35,-20)circle(50pt);
\draw[fill=black](0, 26)circle(50pt);
\draw[fill=black](0,-26)circle(50pt);
\draw[fill=black](23,13)circle(50pt);
\draw[fill=black](-23,13)circle(50pt);
\draw[fill=black](-23,-13)circle(50pt);
\draw[fill=black](23,-13)circle(50pt);
\draw[fill=black](0, -13)circle(50pt);
\draw[fill=black](11,6.5)circle(50pt);
\draw[fill=black](-11,6.5)circle(50pt);
\draw[fill=black](55,0)circle(20pt);
\draw[fill=black](60,0)circle(20pt);
\draw[fill=black](65,0)circle(20pt);
\draw[ thick](0,40)--(23,13)--(35,-20)--(0,-26)--(-35,-20)--(-23,13)--(0,40)(0,26)--(23,13)--(23,-13)--(0,-26)--(-23,-13)--(-23,13)--(0,26)(0,40)--(0,26)(35,-20)--(23,-13)(-35,-20)--(-23,-13)(0,-13)--(11,6.5)--(-11,6.5)--(0,-13)(11,6.5)--(23,13)(-11,6.5)--(-23,13)(0,-13)--(0,-26)(0,26)--(11,6.5)(0,26)--(-11,6.5)(0,-13)--(23,-13)--(11,6.5)(0,-13)--(-23,-13)--(-11,6.5);
\end{tikzpicture}\qquad
\begin{tikzpicture}[scale=0.04]
\draw[ thick] (0,0) circle (40cm);
\draw[fill=black](0, 40)circle(50pt);
\draw[fill=black](-35,-20)circle(50pt);
\draw[fill=black](35,-20)circle(50pt);
\draw[fill=black](0, 26)circle(50pt);
\draw[fill=black](0,-26)circle(50pt);
\draw[fill=black](23,13)circle(50pt);
\draw[fill=black](-23,13)circle(50pt);
\draw[fill=black](-23,-13)circle(50pt);
\draw[fill=black](23,-13)circle(50pt);
\draw[fill=black](0, -13)circle(50pt);
\draw[fill=black](11,6.5)circle(50pt);
\draw[fill=black](-11,6.5)circle(50pt);
\draw[ thick](0,40)--(23,13)--(35,-20)--(0,-26)--(-35,-20)--(-23,13)--(0,40)(0,26)--(23,13)--(23,-13)--(0,-26)--(-23,-13)--(-23,13)--(0,26)(0,40)--(0,26)(35,-20)--(23,-13)(-35,-20)--(-23,-13)(0,-13)--(11,6.5)--(-11,6.5)--(0,-13)(11,6.5)--(23,13)(-11,6.5)--(-23,13)(0,-13)--(0,-26)(0,26)--(11,6.5)(0,26)--(-11,6.5)(0,-13)--(23,-13)--(11,6.5)(0,-13)--(-23,-13)--(-11,6.5);
\end{tikzpicture}
\caption{$\left(n/12\right)$-disjoint copies of $5$-regular maximal planar graphs with $12$ vertices.}
\label{c3}
\end{figure}
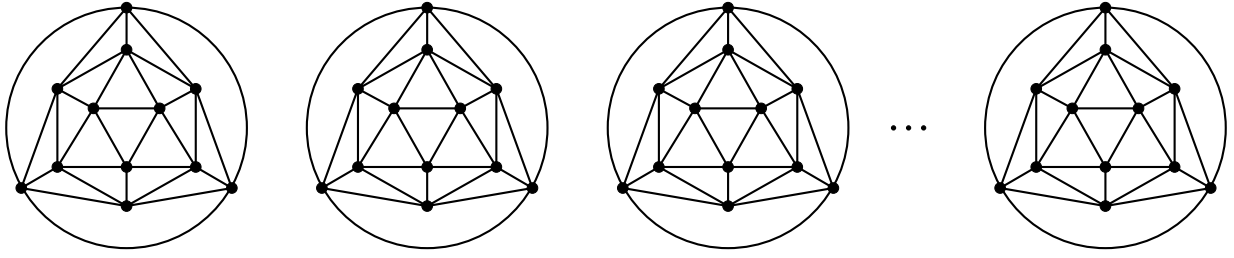

\begin{lemma}\label{deg8_S_2,5}
If $G$ contains a vertex $v$ of degree greater than or equal to $8$, then $e(G)\leq \frac{20}{7}n.$
\end{lemma}
\begin{proof}
Let $v\in V(G)$, such that $d(v)\geq 8$.  It is easy to check that for each $u\in N(v)$, $d(u)\leq 2$. Otherwise, we find a copy of $S_{2,5}$ in $G$. Delete a vertex $u\in N(v)$, then the number of deleted edges is at most $2$. By the induction hypothesis, we get $e(G-\{u\})\leq \frac{20}{7}(n-1)$. Hence, $e(G)=e(G-\{u\})+d(u)\leq \frac{20}{7}(n-1)+2\leq\frac{20}{7}n$.
\end{proof}

\begin{lemma}\label{deg7_S_2,5}
If $G$ contains a vertex $v$ of degree equal to $7$, then $e(G)\leq \frac{20}{7}n.$
\end{lemma}
\begin{proof}
Let $v\in V(G)$, such that $d(v)=7$ and let $H=N(v)\cup \{v\}$.  It is easy to check that for each $u\in N(v)$, if $u$ is adjacent to any vertex in $V(G)\backslash H$, then $d(u)=2$. Otherwise, $G$ contains an $S_{2,5}$.  We are done by induction in this case.  In the other case, assume that for each $u\in N(v)$, $u$ is not adjacent to any vertex in $V(G)\backslash H$.  Then $e(G[H])\leq 3\cdot 8-6=18$. Hence, $e(G)=e(G-H)+e(G[H])\leq \frac{20}{7}(n-8)+18=\frac{20}{7}n$. We are done by induction. 
\end{proof}

\begin{proof}[Proof of Theorem \ref{doublestarsmain}(iv)]
If $G$ contains a vertex of degree at least $7$, we are done by Lemmas \ref{deg8_S_2,5} and \ref{deg7_S_2,5}. Hence, we can assume $\Delta(G)\leq 6$.

\begin{figure}[ht]
\centering
    \begin{tikzpicture}[scale=.1]
            \coordinate (x) at (-12,0);
            \coordinate (y) at (12,0);
            \coordinate (a) at (0,36);
            \coordinate (b) at (0,27);
            \coordinate (c) at (0,18);
            \coordinate (d) at (0,9);
            \coordinate (e) at (0,-9);
            \draw[thick] (x) -- (a) -- (y) -- (x);
            \draw[thick] (x) -- (b) -- (y);
            \draw[thick] (x) -- (c) -- (y);
            \draw[thick] (x) -- (d) -- (y);
            \draw[thick] (x) -- (e) -- (y);
            \draw[fill=black] (x) circle(\circ)  node[label=below:$x$] {};
            \draw[fill=black] (y) circle(\circ)  node[label=below:$y$] {};
            \draw[fill=black] (a) circle(\circ)  node[label=above:$a$] {};
            \draw[fill=black] (b) circle(\circ)  node[label=above:$b$] {};
            \draw[fill=black] (c) circle(\circ)  node[label=above:$c$] {};
            \draw[fill=black] (d) circle(\circ)  node[label=above:$d$] {};
            \draw[fill=black] (e) circle(\circ)  node[label=below:$e$] {};
    \end{tikzpicture}
    \caption{The graph $G$ has a $6-6$ edge $xy$, with $5$ triangles sitting on the edge $xy$.}
    \label{fig_s_3,4_6,6,5triangles}
\end{figure}
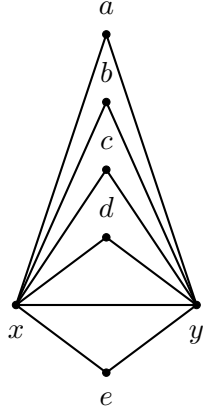
\begin{figure}
\centering
    \begin{tikzpicture}[scale=.1]
            \coordinate (x) at (-12,0);
            \coordinate (y) at (12,0);
            \coordinate (a) at (0,27);
            \coordinate (b) at (0,18);
            \coordinate (c) at (0,9);
            \coordinate (d) at (0,-9);
            \coordinate (f) at (21,0);
            \coordinate (e) at (-21,0);
            \draw[thick] (x) -- (a) -- (y) -- (x);
            \draw[thick] (x) -- (b) -- (y);
            \draw[thick] (x) -- (c) -- (y);
            \draw[thick] (x) -- (d) -- (y);
            \draw[thick] (x) -- (e);
            \draw[thick] (y) -- (f);
            \draw[fill=black] (x) circle(\circ)  node[label=above:$x$] {};
            \draw[fill=black] (y) circle(\circ)  node[label=above:$y$] {};
            \draw[fill=black] (a) circle(\circ)  node[label=below:$a$] {};
            \draw[fill=black] (b) circle(\circ)  node[label=below:$b$] {};
            \draw[fill=black] (c) circle(\circ)  node[label=below:$c$] {};
            \draw[fill=black] (d) circle(\circ)  node[label=above:$d$] {};
            \draw[fill=black] (e) circle(\circ)  node[label=below:$e$] {};
            \draw[fill=black] (f) circle(\circ)  node[label=below:$f$] {};
            \node at (0,-18) {$(i)$};
    \end{tikzpicture}\qquad
    \begin{tikzpicture}[scale=.1]
            \coordinate (x) at (-12,0);
            \coordinate (y) at (12,0);
            \coordinate (a) at (0,27);
            \coordinate (b) at (0,18);
            \coordinate (c) at (0,9);
            \coordinate (d) at (0,-9);
            \coordinate (f) at (21,0);
            \coordinate (e) at (-21,0);
            \draw[thick] (x) -- (a) -- (y) -- (x);
            \draw[thick] (x) -- (b) -- (y);
            \draw[thick] (x) -- (c) -- (y);
            \draw[thick] (x) -- (d) -- (y);
            \draw[thick] (x) -- (e);
            \draw[thick] (y) -- (f);
            \draw[thick] (a) -- (e);
            \draw[fill=black] (x) circle(\circ)  node[label=above:$x$] {};
            \draw[fill=black] (y) circle(\circ)  node[label=above:$y$] {};
            \draw[fill=black] (a) circle(\circ)  node[label=below:$a$] {};
            \draw[fill=black] (b) circle(\circ)  node[label=below:$b$] {};
            \draw[fill=black] (c) circle(\circ)  node[label=below:$c$] {};
            \draw[fill=black] (d) circle(\circ)  node[label=above:$d$] {};
            \draw[fill=black] (e) circle(\circ)  node[label=below:$e$] {};
            \draw[fill=black] (f) circle(\circ)  node[label=below:$f$] {};
            \node at (0,-18) {$(ii)$};
    \end{tikzpicture}\qquad
    \begin{tikzpicture}[scale=.1]
            \coordinate (x) at (-12,0);
            \coordinate (y) at (12,0);
            \coordinate (a) at (0,27);
            \coordinate (b) at (0,18);
            \coordinate (c) at (0,9);
            \coordinate (d) at (0,-9);
            \coordinate (f) at (21,0);
            \coordinate (e) at (-21,0);
            \draw[thick] (x) -- (a) -- (y) -- (x);
            \draw[thick] (x) -- (b) -- (y);
            \draw[thick] (x) -- (c) -- (y);
            \draw[thick] (x) -- (d) -- (y);
            \draw[thick] (x) -- (e);
            \draw[thick] (y) -- (f);
            \draw[thick] (a) -- (e);
            \draw[thick] (a) -- (f);
            \draw[fill=black] (x) circle(\circ)  node[label=above:$x$] {};
            \draw[fill=black] (y) circle(\circ)  node[label=above:$y$] {};
            \draw[fill=black] (a) circle(\circ)  node[label=below:$a$] {};
            \draw[fill=black] (b) circle(\circ)  node[label=below:$b$] {};
            \draw[fill=black] (c) circle(\circ)  node[label=below:$c$] {};
            \draw[fill=black] (d) circle(\circ)  node[label=above:$d$] {};
            \draw[fill=black] (e) circle(\circ)  node[label=below:$e$] {};
            \draw[fill=black] (f) circle(\circ)  node[label=below:$f$] {};
            \node at (0,-18) {$(iii)$};
    \end{tikzpicture}\qquad
    \begin{tikzpicture}[scale=.1]
            \coordinate (x) at (-12,0);
            \coordinate (y) at (12,0);
            \coordinate (a) at (0,27);
            \coordinate (b) at (0,18);
            \coordinate (c) at (0,9);
            \coordinate (d) at (0,-9);
            \coordinate (f) at (21,0);
            \coordinate (e) at (-21,0);
            \draw[thick] (x) -- (a) -- (y) -- (x);
            \draw[thick] (x) -- (b) -- (y);
            \draw[thick] (x) -- (c) -- (y);
            \draw[thick] (x) -- (d) -- (y);
            \draw[thick] (x) -- (e);
            \draw[thick] (y) -- (f);
            \draw[thick] (a) -- (e);
            \draw[thick] (d) -- (f);
            \draw[fill=black] (x) circle(\circ)  node[label=above:$x$] {};
            \draw[fill=black] (y) circle(\circ)  node[label=above:$y$] {};
            \draw[fill=black] (a) circle(\circ)  node[label=below:$a$] {};
            \draw[fill=black] (b) circle(\circ)  node[label=below:$b$] {};
            \draw[fill=black] (c) circle(\circ)  node[label=below:$c$] {};
            \draw[fill=black] (d) circle(\circ)  node[label=above:$d$] {};
            \draw[fill=black] (e) circle(\circ)  node[label=below:$e$] {};
            \draw[fill=black] (f) circle(\circ)  node[label=below:$f$] {};
            \node at (0,-18) {$(iv)$};
    \end{tikzpicture}\qquad
    \begin{tikzpicture}[scale=.1]
            \coordinate (x) at (-12,0);
            \coordinate (y) at (12,0);
            \coordinate (a) at (0,27);
            \coordinate (b) at (0,18);
            \coordinate (c) at (0,9);
            \coordinate (d) at (0,-9);
            \coordinate (f) at (21,0);
            \coordinate (e) at (-21,0);
            \draw[thick] (x) -- (a) -- (y) -- (x);
            \draw[thick] (x) -- (b) -- (y);
            \draw[thick] (x) -- (c) -- (y);
            \draw[thick] (x) -- (d) -- (y);
            \draw[thick] (x) -- (e);
            \draw[thick] (y) -- (f);
            \draw[thick] (a) -- (e);
            \draw[thick] (e) -- (d);
            \draw[fill=black] (x) circle(\circ)  node[label=above:$x$] {};
            \draw[fill=black] (y) circle(\circ)  node[label=above:$y$] {};
            \draw[fill=black] (a) circle(\circ)  node[label=below:$a$] {};
            \draw[fill=black] (b) circle(\circ)  node[label=below:$b$] {};
            \draw[fill=black] (c) circle(\circ)  node[label=below:$c$] {};
            \draw[fill=black] (d) circle(\circ)  node[label=above:$d$] {};
            \draw[fill=black] (e) circle(\circ)  node[label=below:$e$] {};
            \draw[fill=black] (f) circle(\circ)  node[label=below:$f$] {};
            \node at (0,-18) {$(v)$};
    \end{tikzpicture}\qquad
    \begin{tikzpicture}[scale=.1]
            \coordinate (x) at (-12,0);
            \coordinate (y) at (12,0);
            \coordinate (a) at (0,27);
            \coordinate (b) at (0,18);
            \coordinate (c) at (0,9);
            \coordinate (d) at (0,-9);
            \coordinate (f) at (21,0);
            \coordinate (e) at (-21,0);
            \draw[thick] (x) -- (a) -- (y) -- (x);
            \draw[thick] (x) -- (b) -- (y);
            \draw[thick] (x) -- (c) -- (y);
            \draw[thick] (x) -- (d) -- (y);
            \draw[thick] (x) -- (e);
            \draw[thick] (y) -- (f);
            \draw[thick] (a) -- (e);
            \draw[thick] (e) -- (d);
            \draw[thick] (f) -- (d);
            \draw[fill=black] (x) circle(\circ)  node[label=above:$x$] {};
            \draw[fill=black] (y) circle(\circ)  node[label=above:$y$] {};
            \draw[fill=black] (a) circle(\circ)  node[label=below:$a$] {};
            \draw[fill=black] (b) circle(\circ)  node[label=below:$b$] {};
            \draw[fill=black] (c) circle(\circ)  node[label=below:$c$] {};
            \draw[fill=black] (d) circle(\circ)  node[label=above:$d$] {};
            \draw[fill=black] (e) circle(\circ)  node[label=below:$e$] {};
            \draw[fill=black] (f) circle(\circ)  node[label=below:$f$] {};
            \node at (0,-18) {$(vi)$};
    \end{tikzpicture}\qquad
    \begin{tikzpicture}[scale=.1]
            \coordinate (x) at (-12,0);
            \coordinate (y) at (12,0);
            \coordinate (a) at (0,27);
            \coordinate (b) at (0,18);
            \coordinate (c) at (0,9);
            \coordinate (d) at (0,-9);
            \coordinate (f) at (21,0);
            \coordinate (e) at (-21,0);
            \draw[thick] (x) -- (a) -- (y) -- (x);
            \draw[thick] (x) -- (b) -- (y);
            \draw[thick] (x) -- (c) -- (y);
            \draw[thick] (x) -- (d) -- (y);
            \draw[thick] (x) -- (e);
            \draw[thick] (y) -- (f);
            \draw[thick] (a) -- (e);
            \draw[thick] (e) -- (d);
            \draw[thick] (f)..controls (27,0) and (27,9) ..(b);
            \draw[fill=black] (x) circle(\circ)  node[label=above:$x$] {};
            \draw[fill=black] (y) circle(\circ)  node[label=above:$y$] {};
            \draw[fill=black] (a) circle(\circ)  node[label=below:$a$] {};
            \draw[fill=black] (b) circle(\circ)  node[label=below:$b$] {};
            \draw[fill=black] (c) circle(\circ)  node[label=below:$c$] {};
            \draw[fill=black] (d) circle(\circ)  node[label=above:$d$] {};
            \draw[fill=black] (e) circle(\circ)  node[label=below:$e$] {};
            \draw[fill=black] (f) circle(\circ)  node[label=below:$f$] {};
            \node at (0,-18) {$(vii)$};
    \end{tikzpicture}\qquad
    \begin{tikzpicture}[scale=.1]
            \coordinate (x) at (-12,0);
            \coordinate (y) at (12,0);
            \coordinate (a) at (0,27);
            \coordinate (b) at (0,18);
            \coordinate (c) at (0,9);
            \coordinate (d) at (0,-9);
            \coordinate (f) at (21,0);
            \coordinate (e) at (-21,0);
            \draw[thick] (x) -- (a) -- (y) -- (x);
            \draw[thick] (x) -- (b) -- (y);
            \draw[thick] (x) -- (c) -- (y);
            \draw[thick] (x) -- (d) -- (y);
            \draw[thick] (x) -- (e);
            \draw[thick] (y) -- (f);
            \draw[thick] (a) -- (e);
            \draw[thick] (e) -- (d);
            \draw[thick] (f) -- (d);
            \draw[thick] (f) -- (a);
            \draw[fill=black] (x) circle(\circ)  node[label=above:$x$] {};
            \draw[fill=black] (y) circle(\circ)  node[label=above:$y$] {};
            \draw[fill=black] (a) circle(\circ)  node[label=below:$a$] {};
            \draw[fill=black] (b) circle(\circ)  node[label=below:$b$] {};
            \draw[fill=black] (c) circle(\circ)  node[label=below:$c$] {};
            \draw[fill=black] (d) circle(\circ)  node[label=above:$d$] {};
            \draw[fill=black] (e) circle(\circ)  node[label=below:$e$] {};
            \draw[fill=black] (f) circle(\circ)  node[label=below:$f$] {};
            \node at (0,-18) {$(viii)$};
    \end{tikzpicture}\qquad
    \begin{tikzpicture}[scale=.1]
            \coordinate (x) at (-12,0);
            \coordinate (y) at (12,0);
            \coordinate (a) at (0,27);
            \coordinate (b) at (0,18);
            \coordinate (c) at (0,9);
            \coordinate (d) at (0,-9);
            \coordinate (f) at (21,0);
            \coordinate (e) at (-21,0);
            \draw[thick] (x) -- (a) -- (y) -- (x);
            \draw[thick] (x) -- (b) -- (y);
            \draw[thick] (x) -- (c) -- (y);
            \draw[thick] (x) -- (d) -- (y);
            \draw[thick] (x) -- (e);
            \draw[thick] (y) -- (f);
            \draw[thick] (a) -- (e);
            \draw[thick] (e) -- (d);
            \draw[thick] (f)..controls (27,0) and (27,9) ..(b);
            \draw[thick] (f) -- (a);
            \draw[fill=black] (x) circle(\circ)  node[label=above:$x$] {};
            \draw[fill=black] (y) circle(\circ)  node[label=above:$y$] {};
            \draw[fill=black] (a) circle(\circ)  node[label=below:$a$] {};
            \draw[fill=black] (b) circle(\circ)  node[label=below:$b$] {};
            \draw[fill=black] (c) circle(\circ)  node[label=below:$c$] {};
            \draw[fill=black] (d) circle(\circ)  node[label=above:$d$] {};
            \draw[fill=black] (e) circle(\circ)  node[label=below:$e$] {};
            \draw[fill=black] (f) circle(\circ)  node[label=below:$f$] {};
            \node at (0,-18) {$(ix)$};
    \end{tikzpicture}
    \begin{tikzpicture}[scale=.1]
            \coordinate (x) at (-12,0);
            \coordinate (y) at (12,0);
            \coordinate (a) at (0,27);
            \coordinate (b) at (0,18);
            \coordinate (c) at (0,9);
            \coordinate (d) at (0,-9);
            \coordinate (f) at (21,0);
            \coordinate (e) at (-21,0);
            \draw[thick] (x) -- (a) -- (y) -- (x);
            \draw[thick] (x) -- (b) -- (y);
            \draw[thick] (x) -- (c) -- (y);
            \draw[thick] (x) -- (d) -- (y);
            \draw[thick] (x) -- (e);
            \draw[thick] (y) -- (f);
            \draw[thick] (a) -- (e);
            \draw[thick] (e) -- (d);
            \draw[thick] (f)..controls (27,0) and (27,9) ..(b);
            \draw[thick] (f)..controls (27,0) and (27,6) ..(c);
            \draw[fill=black] (x) circle(\circ)  node[label=above:$x$] {};
            \draw[fill=black] (y) circle(\circ)  node[label=above:$y$] {};
            \draw[fill=black] (a) circle(\circ)  node[label=below:$a$] {};
            \draw[fill=black] (b) circle(\circ)  node[label=below:$b$] {};
            \draw[fill=black] (c) circle(\circ)  node[label=below:$c$] {};
            \draw[fill=black] (d) circle(\circ)  node[label=above:$d$] {};
            \draw[fill=black] (e) circle(\circ)  node[label=below:$e$] {};
            \draw[fill=black] (f) circle(\circ)  node[label=below:$f$] {};
            \node at (0,-18) {$(x)$};
    \end{tikzpicture}
    \end{figure}
\begin{figure}
    \centering
\captionsetup{singlelinecheck=off}
\caption[foo bar]{The graph $G$ has a $6-6$ edge $xy$, with $4$ triangles sitting on the edge $xy$.
    \begin{enumerate}[label=(\roman*)]
        \item the vertices $e$ and $f$ have no neighbors in $S_1$.
        \item The vertex $e$ has one neighbor in $S_1$ and $f$ has none. 
        \item The vertices $e$ and $f$ have one common neighbor in $S_1$.
        \item The vertices $e$ and $f$ have one distinct neighbor in $S_1$.
        \item The vertex $e$ has two neighbors in $S_1$ and $f$ has none.
        \item The vertex $e$ is the neighbor of $a$ and $d$, and $f$ is the neighbor of $d$.
        \item The vertex $e$ is the neighbor of $a$ and $d$, and $f$ is the neighbor of $b$.
        \item The vertices $e$ and $f$ are neighbors of both $a$ and $d$.
        \item The vertex $e$ is the neighbor of $a$ and $d$, while $f$ is the neighbor of $a$ and $b$.
        \item The vertex $e$ is the neighbor of $a$ and $d$, while $f$ is the neighbor of $b$ and $c$.
    \end{enumerate}
\ContinuedFloat 
}
\label{fig_s_3,4_6,6}
\end{figure}

\begin{claim} \label{clm6-6s_2,5}
If $G$ contains a $6-6$ edge, then $e(G)\leq \frac{20}{7}n$.
\end{claim}
\begin{proof}
 Let $xy\in E(G)$ be a $6$-$6$ edge. There are at least $4$ triangles sitting on the edge $xy$, otherwise $G$ contains an $S_{2,5}$.  We subdivide the cases based on the number of triangles sitting on the edge $xy$.
 \begin{enumerate}
     \item \textbf{There are $5$ triangles sitting on the edge $xy$.} Let $a,b,c,d$ and $e$ be the vertices in $G$ which are adjacent to both $x$ and $y$ (see Figure \ref{fig_s_3,4_6,6,5triangles}).  Let $S_1=\{a,b,c,d,e\}$ and $H=\{x,y,a,b,c,d,e\}$.  Delete the vertices in $H$.  The vertices in $S_1$ can have at most one neighbor in $V(G)\backslash H$ each and can form a path of length $4$ in $S_1$.  Hence, the number of edges deleted is at most $11+5+4=20$. Using the induction hypothesis, $e(G)\leq e(G-H)+20\leq \frac{20}{7}(n-7)+20=\frac{20}{7}n.$

     \item \textbf{There are $4$ triangles sitting on the edge $xy$.}  Let $a,b,c$ and $d$ be the vertices in $G$ which are adjacent to both $x$ and $y$. Let $e$ be the vertex adjacent to $x$ but not adjacent to $y$, and $f$ be the vertex adjacent to $y$ but not adjacent to $x$.   Let $S_1=\{a,b,c,d\}$ and $H=\{x,y\}\cup S_1\cup\{e,f\}$.  Delete the vertices in $H$.  The vertices $e$ and $f$ can have at most one neighbor in $V(G)\backslash H$ each.  We distinguish the cases based on the neighbors of $e$ and $f$ as follows:
     \begin{enumerate}
         \item \textbf{The vertices $e$ and $f$ have no neighbors in $S_1$}, see Figure \ref{fig_s_3,4_6,6}(i).
         The vertices in $S_1$ can have at most one neighbor in $V(G)\backslash H$ each and can form a path of length $3$ in $S_1$.  If the vertices $e$ and $f$ are adjacent, then they cannot have a neighbor in $V(G)\backslash H$.  Otherwise, we have an $S_{2,5}$ with $ex$ (or $fy$) as the backbone.  Thus, the number of edges deleted is at most $11+3+4+2=20$.  
         
         \item \textbf{One of the vertices $e$ or $f$ has one neighbor in $S_1$, and the other has none.}
         Without loss of generality, suppose $a$ and $e$ are adjacent (see Figure \ref{fig_s_3,4_6,6}(ii)), then $a$ cannot have a neighbor in $V(G)\backslash H$.  Otherwise, we have an $S_{2,5}$ with $ay$ as the backbone.  If the vertices $e$ and $f$ are adjacent, then they cannot have a neighbor in $V(G)\backslash H$.  Similarly, as before, the vertices $b,c$ and $d$ can have at most one neighbor in $V(G)\backslash H$ each and the vertices $\{a,b,c,d\}$ can form a path of length $3$ in $S_1$.  Thus, the number of edges deleted is at most $12+3+3+2=20$.  
         
         \item \textbf{The vertices $e$ and $f$ have one neighbor in $S_1$.}
         There are two possibilities. In the first case, without loss of generality, suppose $a$ is the common neighbor of $e$ and $f$, see Figure \ref{fig_s_3,4_6,6}(iii).  Similarly, as before, $a$ cannot have a neighbor in $V(G)\backslash H$.  If the vertices $e$ and $f$ are adjacent, they cannot have a neighbor in $V(G)\backslash H$.  The vertices $b,c$ and $d$ can have at most one neighbor in $V(G)\backslash H$ each, and the vertices $\{a,b,c,d\}$ can form  a  path of  length  $3$  in $S_1$.  Thus, the number of edges deleted is at most $13+3+3+2=21$. 
         
         Without loss of generality, suppose $a$ is the neighbor of $e$ while $d$ is the neighbor of $f$, see Figure \ref{fig_s_3,4_6,6}(iv). Similarly, as before, the vertices $a$ and $d$ cannot have a neighbor in $V(G)\backslash H$.   The vertex $b$ and $c$ can have at most one neighbor in $V(G)\backslash H$ each,  and  the  vertices $\{a,b,c,d\}$ can form a  path  of  length  $3$  in $S_1$.  If the vertices $e$ and $f$ are adjacent, then they cannot have a neighbor in $V(G)\backslash H$.  Thus, the number of edges deleted is at most $13+3+2+2=20$. (In fact, it can be shown that if the vertices $w$ and $f$ are adjacent, there can only be a $2$-path inside $S_1$.  However, this precision is unnecessary.  We skip this in the following cases also.)
         
         \item \textbf{One of the vertices $e$ or $f$ has two neighbors in $S_1$, while the other has none.}
         Without loss of generality, suppose $e$ is the neighbor of $a$ and $d$, see Figure \ref{fig_s_3,4_6,6}(v). Similarly, as before, the vertices $a$ and $d$ cannot have a neighbor in $V(G)\backslash H$.  The vertices $b$ and $c$ can have at most one neighbor in $V(G)\backslash H$ each, and the vertices $\{a,b,c,d\}$ can form a path of length $3$ in $S_1$.  If the vertices $e$ and $f$ are adjacent, then they cannot have a neighbor in $V(G)\backslash H$.  Thus, the number of edges deleted is at most $13+3+2+2=20$.
         
         \item \textbf{One of the vertices $e$ or $f$ has two neighbors in $S_1$, while the other has one neighbor.}
         There are two possibilities. In the first case, without loss of generality, suppose $e$ is the neighbor of $a$ and $d$, and $f$ is the neighbor of $d$ (see Figure \ref{fig_s_3,4_6,6}(vi)).  Similarly, as before, the vertices $a$ and $d$ cannot have a neighbor in $V(G)\backslash H$.  The vertices $b$ and $c$ can have at most one neighbor in $V(G)\backslash H$ each, and the vertices $\{a,b,c,d\}$ can form a path of length $3$ in $S_1$.  If the vertices $e$ and $f$ are adjacent, then they cannot have a neighbor in $V(G)\backslash H$.  Thus, the number of edges deleted is at most $14+3+2+2=21$.
         
         In the other case, without loss of generality, assume that $e$ is the neighbor of $a$ and $d$, and $f$ is the neighbor of $b$ (see Figure \ref{fig_s_3,4_6,6}(vii)).  The vertices $a,b$ and $d$ cannot have a neighbor in $V(G)\backslash H$, but they along with $c$ can form a path of length $3$ in $S_1$.  The vertex $c$ can have at most one neighbor in $V(G)\backslash H$.  Thus, the number of edges deleted is at most $14+3+1+2=20$.
         
         \item \textbf{Both the vertices $e$ and $f$ have two neighbors in $S_1$.}  
         There are three possibilities. In the first case, without loss of generality, suppose $e$ and $f$ are neighbors of both $a$ and $d$, see Figure \ref{fig_s_3,4_6,6}(viii).  Similarly, as before, the vertices $a$ and $d$ cannot have a neighbor in $V(G)\backslash H$.  The vertices $b$ and $c$ can have at most one neighbor in $V(G)\backslash H$ each.  The vertices $\{a,b,c,d\}$ can form a path of length $3$ in $S_1$.  If the vertices $e$ and $f$ are adjacent, then they cannot have a neighbor in $V(G)\backslash H$.  Thus, the number of edges deleted is at most $15+3+2+2=22$.
         
         On the other hand, without loss of generality, assume $e$ is the neighbor of both $a$ and $d$, while $f$ is the neighbor of $a$ and $b$ (see Figure \ref{fig_s_3,4_6,6}(ix)).  The vertices $a,b$ and $d$ cannot have a neighbor in $V(G)\backslash H$, but they along with $c$ can form a path of length $3$ in $S_1$. The vertex $c$ can have at most one neighbor in $V(G)\backslash H$ each.  Thus, the number of edges deleted is at most $15+3+1+2=21$.  
         
         In the last case, without loss of generality, assume $e$ is the neighbor of $a$ and $d$ both, while $f$ is the neighbor of $b$ and $c$ (see Figure \ref{fig_s_3,4_6,6}(x)).  The vertices $a,b,c$ and $d$ cannot have a neighbor in $V(G)\backslash H$, but they can form a path of length $3$ in $S_1$.  Thus, the  number of edges deleted is at most $15+3+2=20$.
 \end{enumerate}
\end{enumerate}

Using the induction hypothesis, $$e(G)\leq e(G-H)+22\leq \frac{20}{7}(n-8)+22=\frac{20}{7}n-\frac{6}{7}.$$  This completes the proof of the claim.
\end{proof}
  Consider $x,y\in V(G)$.  By the previous claims, if $d(x)+d(y)\geq 12$, we are done by induction.  Assume that $d(x)+d(y)\leq 11$.  Summing it over all the edge pairs in $G$, we have $11e(G)\geq \sum_{xy\in E(G)}\left(d(x)+d(y)\right)=\sum_{x\in V(G)}(d(x))^2\geq n\overline{d}^2=n(\frac{2e(G)}{n})^2$, where $\overline{d}$ is the average degree in $G$.  This gives us $e(G)\leq \frac{11}{4}n\leq \frac{20}{7}n$, for $n\geq 1$.  
\end{proof}

\section{Planar Tur\'an number of \texorpdfstring{$S_{3,3}$}{S3,3}}

We show that for infinitely many integer values of $n$, we can construct an $n$-vertex $S_{3,3}$-free plane graph $G_n$ with $\frac{5}{2}n-5$ edges.  This is to verify that the bound we have is best up to the linear term. Consider a plane graph $G_n$ which is obtained by joining every vertex of the maximal matching on $n-2$ vertices with two vertices. Constructions of $G_n$ when $n$ is even or odd is shown in Figure \ref{exte1}. Each edge in $G_n$ has a degree $3$ end vertex. Thus, $G_n$ is an $S_{3,3}$-free planar graph. Moreover, $e(G_n)=\floor{\frac{5}{2}n}-5$. 


\begin{figure}[ht]
\centering
\captionsetup{justification=centering}
\begin{tikzpicture}[scale=0.5]
\draw[fill=black](-4,0)circle(4pt);
\draw[fill=black](4,0)circle(4pt);
\draw[fill=black](0,-4)circle(4pt);
\draw[fill=black](0,-3)circle(4pt);
\draw[fill=black](0,-2)circle(4pt);
\draw[fill=black](0,2)circle(4pt);
\draw[fill=black](0,0.5)circle(2pt);
\draw[fill=black](0,-0.5)circle(2pt);
\draw[fill=black](0,0)circle(2pt);
\draw[fill=black](0,4)circle(4pt);
\draw[fill=black](0,3)circle(4pt);
\draw[fill=black](0,-1)circle(4pt);
\draw[fill=black](0,1)circle(4pt);
\draw[thick](-4,0)--(0,-4)--(4,0)(-4,0)--(0,-3)--(4,0)(-4,0)--(0,-2)--(4,0)(-4,0)--(0,4)--(4,0)(-4,0)--(0,3)--(4,0)(-4,0)--(0,2)--(4,0)(-4,0)--(0,-1)--(4,0)(-4,0)--(0,1)--(4,0);
\draw[thick](0,4)--(0,3)(0,2)--(0,1) (0,-4)--(0,-3)(0,-2)--(0,-1);
\node at (0,-6){(a) $n$ is even.};
\end{tikzpicture}\qquad\qquad
\begin{tikzpicture}[scale=0.5]
\draw[fill=black](-4,0)circle(4pt);
\draw[fill=black](4,0)circle(4pt);
\draw[fill=black](0,5)circle(4pt);
\draw[fill=black](0,-4)circle(4pt);
\draw[fill=black](0,-3)circle(4pt);
\draw[fill=black](0,-2)circle(4pt);
\draw[fill=black](0,2)circle(4pt);
\draw[fill=black](0,0.5)circle(2pt);
\draw[fill=black](0,-0.5)circle(2pt);
\draw[fill=black](0,0)circle(2pt);
\draw[fill=black](0,4)circle(4pt);
\draw[fill=black](0,3)circle(4pt);
\draw[fill=black](0,-1)circle(4pt);
\draw[fill=black](0,1)circle(4pt);
\draw[thick](-4,0)--(0,5)--(4,0)(-4,0)--(0,-4)--(4,0)(-4,0)--(0,-3)--(4,0)(-4,0)--(0,-2)--(4,0)(-4,0)--(0,4)--(4,0)(-4,0)--(0,3)--(4,0)(-4,0)--(0,2)--(4,0)(-4,0)--(0,-1)--(4,0)(-4,0)--(0,1)--(4,0);
\draw[thick](0,4)--(0,3)(0,2)--(0,1) (0,-4)--(0,-3)(0,-2)--(0,-1);
\node at (0,-6){(b) $n$ is odd.};
\end{tikzpicture}
\caption{Extremal Constructions for the lower bound of planar Tur\'an number of $S_{3,3}$. }
\label{exte1}
\end{figure}
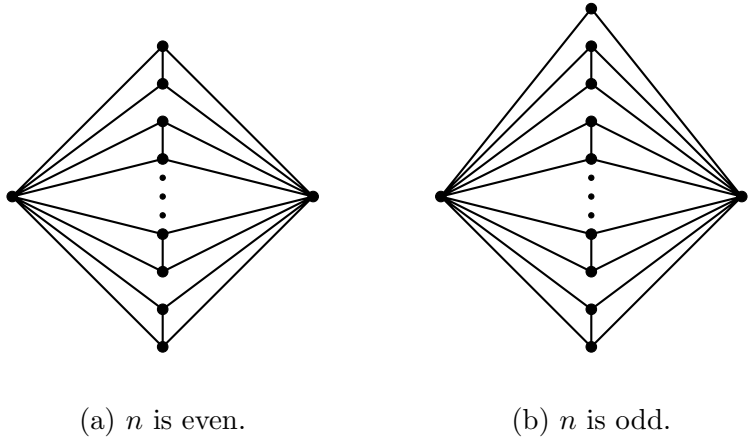

\begin{claim}
Let $G$ be an $S_{3,3}$ on $n (1\leq n\leq 8)$ vertices.  The number of edges in $G$ is at most $\frac{5}{2}n-2$.
\end{claim}
\begin{proof}
Recall that, an $n$-vertex maximal planar graph contains $3n-6$ edges. Since $3n-6\leq \frac{5}{2}n-2$ for $n\leq 8$,  $e(G)\leq \frac{5}{2}n-2$ holds for all $n$, $1\leq n\leq 8$.
\end{proof}

Let $u$ be a vertex in $G$ with degree at most $2$. By the induction hypothesis, we get $e(G-\{u\})\leq \frac{20}{7}(n-1)$. Hence, $e(G)=e(G-\{u\})+d(u)\leq \frac{5}{2}(n-1)-2+2<\frac{5}{2}n-2$. Similarly, if we have a $3$-$3$ edge in $G$, we can finish the proof by induction.  From now on, we may assume that $G$ contains no vertex of degree at most $2$ and no $3$-$3$ edge. The following claims deal with the different cases of degree pairs in $G$:
\begin{claim}\label{clm1}
No vertex in $G$ with  degree at least $7$ is adjacent to a vertex of degree at least $4$.
\end{claim}
\begin{proof}
 Suppose not. Let $xy$ be an edge in $G$ such that $d(x)\geq 7$ and $d(y)\geq 4$. Obviously, there are three vertices in $V(G)\backslash \{x\}$, say $y_1,y_2$ and $y_3$, which are adjacent to $y$. Since $|N(x)\backslash\{y\}|\geq 6$, there are three vertices $x_1,x_2$ and $x_3$, not in $\{y,y_1,y_2,y_3\}$ which are adjacent to $x$. This implies that we got an $S_{3,3}$ in $G$ with backbone $xy$ and leaf-sets $\{x_1,x_2,x_3\}$ and $\{y_1,y_2,y_3\}$, respectively, which is a contradiction. 
 This completes the proof of Claim \ref{clm1}.
\end{proof}

\begin{claim}\label{clm6-6}
If there is a $6$-$6$ edge in $G$, then $e(G)\leq \frac{5}{2}n-2.$
\end{claim}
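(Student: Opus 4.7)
The plan is to exploit the rigidity that $S_{3,3}$-freeness forces on a $6$-$6$ edge, and then delete a $7$-vertex set and invoke the inductive hypothesis of Theorem \ref{doublestarsmain}(v). The first step is to observe that a $6$-$6$ edge $xy$ must lie in exactly five triangles. Since $|N(x) \setminus \{y\}| = |N(y) \setminus \{x\}| = 5$, setting $t := |N(x) \cap N(y)|$ gives $|N(x) \cup N(y) \setminus \{x,y\}| = 10 - t$. An $S_{3,3}$ with backbone $xy$ needs $3$ disjoint leaves on each side, so $6$ distinct vertices from this union, and a short Hall-type argument produces such leaves whenever $10 - t \geq 6$. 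Hence $t = 5$, so $N(x) \setminus \{y\} = N(y) \setminus \{x\} =: \{a_1, \ldots, a_5\}$. Write $H := \{x, y, a_1, \ldots, a_5\}$.

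Next I would control the degree and outer-edge count of each $a_i$. Let $k_i$ denote the number of neighbors of $a_i$ in $\{a_1, \ldots, a_5\} \setminus \{a_i\}$. Considering $S_{3,3}$ with backbone $xa_i$, the common neighborhood $N(x) \cap N(a_i) \setminus \{x, a_i\}$ has size $1 + k_i$ (coming from $y$ and the $k_i$ inner neighbors of $a_i$), and a brief counting shows that $S_{3,3}$ appears whenever $d(a_i) \geq \max(4, k_i + 3)$. Combined with the lower bound $d(a_i) \geq \max(3, k_i + 2)$ (from the minimum-degree assumption plus the known neighbors $x$, $y$, and $k_i$ inner ones), this pins down $a_i$ exactly: if $k_i = 0$ then $d(a_i) = 3$ and $a_i$ has one edge to $G - H$; if $k_i \geq 1$ then $d(a_i) = k_i + 2$ and $a_i$ has no edges to $G - H$. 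The standing no-$3$-$3$-edge assumption additionally rules out two adjacent $a_i, a_j$ both having $k = 1$.

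To finish, let $s := e(G[\{a_1, \ldots, a_5\}])$ and let $t'$ denote the number of edges from the $a_i$'s to $G - H$. By the previous paragraph, $t'$ equals the number of isolated vertices of $G[\{a_1, \ldots, a_5\}]$, and the edges incident to $H$ total $1 + 10 + s + t' = 11 + s + t'$. Planarity of $G[H]$ gives $s \leq 3\cdot 7 - 6 - 11 = 4$. The key combinatorial step, and the main obstacle, is showing $s + t' \leq 5$: this is a brief enumeration of the possible $5$-vertex graphs on $\{a_1, \ldots, a_5\}$, with the extremal shapes being the empty graph ($s=0, t'=5$), a triangle plus two isolated vertices ($s=3, t'=2$), and $C_4$ or the paw plus an isolated vertex ($s=4, t'=1$). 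Applying Theorem \ref{doublestarsmain}(v) inductively to $G - H$ then yields
\[
e(G) \leq e(G - H) + 11 + s + t' \leq \tfrac{5}{2}(n-7) - 2 + 16 = \tfrac{5}{2}n - \tfrac{7}{2} \leq \tfrac{5}{2}n - 2,
\]
as required; small values of $n$ are covered by the base cases handled earlier in the section.
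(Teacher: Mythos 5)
Your proof is correct and follows essentially the same route as the paper: force exactly five triangles on the $6$-$6$ edge, delete the seven-vertex set $H=\{x,y,a_1,\dots,a_5\}$, bound the edges incident to $H$ by $11+s+t'\leq 16$, and apply the inductive bound to $G-H$. Your per-vertex degree analysis of the $a_i$ and the enumeration giving $s+t'\leq 5$ is in fact a cleaner justification of the paper's count (the paper's ``at most $4$ inner plus outer edges when $S_1$ has an inner edge'' is stated loosely, and some of your extremal shapes such as a triangle in $S_1$ are actually excluded by planarity since $\{x,y,a_i,a_j,a_k\}$ would form a $K_5$), but the decomposition and final inequality are identical.
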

\begin{proof}
 Let $xy\in E(G)$ be a $6$-$6$ edge. Since $G$ is an $S_{3,3}$-free plane graph, $xy$ must be contained in $5$ triangles, see Figure \ref{6-6 case}.
 \begin{figure}[ht]
\centering
\captionsetup{justification=centering}
    \begin{tikzpicture}[scale=.1]
            \coordinate (x) at (-12,0);
            \coordinate (y) at (12,0);
            \coordinate (a) at (0,36);
            \coordinate (b) at (0,27);
            \coordinate (c) at (0,18);
            \coordinate (d) at (0,9);
            \coordinate (e) at (0,-9);
            \draw[thick] (x) -- (a) -- (y) -- (x);
            \draw[thick] (x) -- (b) -- (y);
            \draw[thick] (x) -- (c) -- (y);
            \draw[thick] (x) -- (d) -- (y);
            \draw[thick] (x) -- (e) -- (y);
            \draw[fill=black] (x) circle(\circ)  node[label=below:$x$] {};
            \draw[fill=black] (y) circle(\circ)  node[label=below:$y$] {};
            \draw[fill=black] (a) circle(\circ)  node[label=above:$a$] {};
            \draw[fill=black] (b) circle(\circ)  node[label=above:$b$] {};
            \draw[fill=black] (c) circle(\circ)  node[label=above:$c$] {};
            \draw[fill=black] (d) circle(\circ)  node[label=above:$d$] {};
            \draw[fill=black] (e) circle(\circ)  node[label=below:$e$] {};
    \end{tikzpicture}
\caption{The graph $G$ has a $6-6$ edge $xy$.}
\label{6-6 case}
\end{figure}
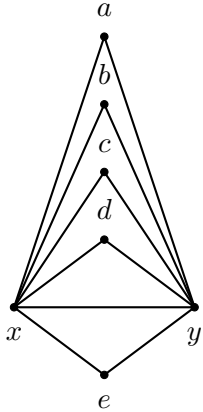
Let $a,b,c,d$ and $e$ be the vertices in $G$ which are adjacent to both $x$ and $y$.  Let $S_1=\{a,b,c,d,e\}$ and $H=\{x,y,a,b,c,d,e\}$.  Delete the vertices in $H$.  Suppose $a$ has two neighbors in $V(G)\backslash H$.  We immediately get an $S_{3,3}$ with $xa$ or $ya$ as the backbone.  Thus, any vertex in the set $S_1$ can have at most $1$ neighbor in $V(G)\backslash H$. If there are no edges between the vertices in $S_1$, we deleted at most $11+5=16$ edges.  Assume that there is an edge between the vertices in $S_1$, say $ab$.  If $a$ (or $b$) has a neighbor in $V(G)\backslash H$, $xa$ (or $xb$) is the backbone of an $S_{3,3}$.  Similarly, for the other edges in $S_1$, both vertices cannot have a neighbor in $V(G)\backslash H$.  Thus, if there is an edge joining any two vertices in $S_1$, the number of edges deleted is at most $11+4=15$.  Using the induction hypothesis, $$e(G)\leq e(G-H)+16\leq \frac{5}{2}(n-7)-2+16=\frac{5}{2}n-3.5<\frac{5}{2}n-2.$$  This completes the proof.
\end{proof}

\begin{figure}
\centering
    \begin{tikzpicture}[scale=.1]
            \coordinate (x) at (-12,0);
            \coordinate (y) at (12,0);
            \coordinate (a) at (0,27);
            \coordinate (b) at (0,18);
            \coordinate (c) at (0,9);
            \coordinate (d) at (0,-9);
            \coordinate (e) at (21,0);
            \draw[thick] (x) -- (a) -- (y) -- (x);
            \draw[thick] (x) -- (b) -- (y);
            \draw[thick] (x) -- (c) -- (y);
            \draw[thick] (x) -- (d) -- (y);
            \draw[thick] (e) -- (y);
            \draw[fill=black] (x) circle(\circ)  node[label=below:$x$] {};
            \draw[fill=black] (y) circle(\circ)  node[label=below:$y$] {};
            \draw[fill=black] (a) circle(\circ)  node[label=below:$a$] {};
            \draw[fill=black] (b) circle(\circ)  node[label=below:$b$] {};
            \draw[fill=black] (c) circle(\circ)  node[label=below:$c$] {};
            \draw[fill=black] (d) circle(\circ)  node[label=above:$d$] {};
            \draw[fill=black] (e) circle(\circ)  node[label=below:$e$] {};
            \node at (0,-21){(i)};
    \end{tikzpicture}\qquad
    \begin{tikzpicture}[scale=.1]
            \coordinate (x) at (-12,0);
            \coordinate (y) at (12,0);
            \coordinate (a) at (0,27);
            \coordinate (b) at (0,18);
            \coordinate (c) at (0,9);
            \coordinate (d) at (0,-9);
            \coordinate (e) at (21,0);
            \draw[thick] (x) -- (a) -- (y) -- (x);
            \draw[thick] (x) -- (b) -- (y);
            \draw[thick] (x) -- (c) -- (y);
            \draw[thick] (x) -- (d) -- (y);
            \draw[thick] (e) -- (y);
            \draw[thick] (e) -- (a);
            \draw[fill=black] (x) circle(\circ)  node[label=below:$x$] {};
            \draw[fill=black] (y) circle(\circ)  node[label=below:$y$] {};
            \draw[fill=black] (a) circle(\circ)  node[label=below:$a$] {};
            \draw[fill=black] (b) circle(\circ)  node[label=below:$b$] {};
            \draw[fill=black] (c) circle(\circ)  node[label=below:$c$] {};
            \draw[fill=black] (d) circle(\circ)  node[label=above:$d$] {};
            \draw[fill=black] (e) circle(\circ)  node[label=below:$e$] {};
            \node at (0,-21){(ii)};
    \end{tikzpicture}\qquad
    \begin{tikzpicture}[scale=.1]
            \coordinate (x) at (-12,0);
            \coordinate (y) at (12,0);
            \coordinate (a) at (0,27);
            \coordinate (b) at (0,18);
            \coordinate (c) at (0,9);
            \coordinate (d) at (0,-9);
            \coordinate (e) at (21,0);
            \draw[thick] (x) -- (a) -- (y) -- (x);
            \draw[thick] (x) -- (b) -- (y);
            \draw[thick] (x) -- (c) -- (y);
            \draw[thick] (x) -- (d) -- (y);
            \draw[thick] (e) -- (y);
            \draw[thick] (e) -- (a);
            \draw[thick] (e) -- (d);
            \draw[fill=black] (x) circle(\circ)  node[label=above:$x$] {};
            \draw[fill=black] (y) circle(\circ)  node[label=above:$y$] {};
            \draw[fill=black] (a) circle(\circ)  node[label=below:$a$] {};
            \draw[fill=black] (b) circle(\circ)  node[label=below:$b$] {};
            \draw[fill=black] (c) circle(\circ)  node[label=below:$c$] {};
            \draw[fill=black] (d) circle(\circ)  node[label=above:$d$] {};
            \draw[fill=black] (e) circle(\circ)  node[label=below:$e$] {};
            \node at (0,-21){(iii)};
    \end{tikzpicture}
\captionsetup{singlelinecheck=off}
\caption[foo bar2]{The graph $G$ has a $5-6$ edge $xy$.
\begin{enumerate}[label=(\roman*)]
    \item The vertex $e$ has no neighbors in $S_1$.
    \item The vertex $e$ has one neighbor in $S_1$.
    \item The vertex $e$ has two neighbors in $S_1$.
\end{enumerate}}
\label{6-5 case}
\end{figure}

\begin{claim}\label{clm5-6}
If there is a $5$-$6$ edge in $G$, then $e(G)\leq \frac{5}{2}n-2.$
\end{claim}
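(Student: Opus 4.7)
Let $xy$ be a $5$-$6$ edge in $G$; by symmetry I assume $d(x)=5$ and $d(y)=6$. The opening step is to determine the local structure of $xy$. Treating $xy$ as a candidate backbone of an $S_{3,3}$, we need disjoint leaf-triples from $N(x)\backslash\{y\}$ (of size $4$) and $N(y)\backslash\{x\}$ (of size $5$); a short counting check shows this is possible unless $|N(x)\cap N(y)|\geq 4$. Thus exactly four triangles sit on $xy$. Label the common neighbors $a,b,c,d$ and the remaining neighbor of $y$ by $e$, and set $S_1:=\{a,b,c,d\}$ and $H:=\{x,y\}\cup S_1\cup\{e\}$, as in Figure~\ref{6-5 case}.

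Mirroring Claim~\ref{clm6-6}, I split into three subcases by $|N(e)\cap S_1|\in\{0,1,2\}$. The first step in every subcase is to prove $d(e)=3$: if $d(e)\geq 4$, the backbone $ye$ produces an $S_{3,3}$ because $N(e)\backslash\{y\}$ contains enough outer and $S_1$-neighbors to form one leaf-set, while $N(y)\backslash N(e)$ easily supplies the disjoint second leaf-set. This pins down the number of outer edges from $e$ in each subcase.

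The heart of the proof is to bound the inner edges $I$ (inside $S_1$) and outer edges $O$ (from $S_1$ to $G-H$). Using backbones $xv$ and $yv$ for each $v\in S_1$, one shows that $v$ has at most one outer edge, and that having an outer edge forces $v$ to have no inner neighbor in $S_1$; moreover, if $v$ is adjacent to $e$ then $v$ has no outer edge at all, since the triple $\{y,e,\cdot\}$ immediately produces an $S_{3,3}$ with backbone $xv$. On the planarity side, the subgraph induced on $\{x,y\}$ together with any three vertices of $S_1$ that form a triangle would contain $K_5$, which is impossible; combined with the nine ``book'' edges of the $K_{2,4}+xy$ on $\{x,y\}\cup S_1$ and the bound $3\times 6-6=12$, this forces $I\leq 3$ with $G[S_1]$ triangle-free.

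Bookkeeping then yields $I+O\leq 4$ in subcase (i) and $I+O\leq 3$ in subcases (ii) and (iii). Adding the $10$ edges incident to $\{x,y\}$, the edges $ae$ (and $be$ in subcase (iii)), and the outer edges of $e$, the total number of edges removed together with $H$ is at most $16$. By the inductive hypothesis on $G-H$,
\[
e(G)\leq e(G-H)+16\leq\tfrac{5}{2}(n-7)-2+16=\tfrac{5}{2}n-\tfrac{7}{2}\leq\tfrac{5}{2}n-2.
\]
I expect the hardest part to be subcase (ii), where the single edge $ae$ breaks the symmetry of $S_1$: now $a$ already has three forced neighbors $\{x,y,e\}$, so any additional inner or outer neighbor of $a$ easily completes an $S_{3,3}$ with backbone $ya$ or $xa$, and one must verify this in every possible pattern of inner edges amongst $\{a,b,c,d\}$.
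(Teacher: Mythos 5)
Your proof is correct and follows essentially the same route as the paper's: the same deduction that exactly four triangles sit on the $5$-$6$ edge, the same deleted set $H=\{x,y\}\cup S_1\cup\{e\}$, the same case split on $|N(e)\cap S_1|\in\{0,1,2\}$, the same inner/outer edge bookkeeping giving at most $16$ deleted edges, and the same inductive conclusion. The only difference is that you spell out a few steps the paper leaves implicit (the bound $d(e)\le 3$ via the backbone $ye$, and the planarity/$K_5$ argument forcing $G[S_1]$ to be triangle-free with at most three inner edges).
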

\begin{proof}
Let $xy$ be a $5$-$6$ edge in $G$.  Since $G$ is an $S_{3,3}$-free plane graph, $xy$ must be contained in $4$ triangles, see Figure \ref{6-5 case}.  Let $a,b,c$ and $d$ be the vertices in $G$ which are adjacent to both $x$ and $y$.  Let $e$ be the vertex adjacent to $y$ but not adjacent to $x$. Let $S_1=\{a,b,c,d\}$ and $H=\{x,y,a,b,c,d,e\}$.  Delete the vertices in $H$.  Any vertex in $S_1$ can have at most $1$ neighbor in $V(G)\backslash H$.  If there is an edge joining any two vertices in $S_1$, say $ab$.  Similarly, as before, the vertices $a$ and $b$ cannot have a neighbor in $V(G)\backslash H$. We further distinguish the cases based on the number of edges from $e$ as follows:
\begin{enumerate}
\item \textbf{The vertex $e$ has no neighbors in $S_1$}, see Figure \ref{6-5 case}(i). Clearly, the vertex $e$ can have at most $2$ neighbors in $V(G)\backslash H$. If there are no edges between the vertices in $S_1$, we deleted at most $10+2+4=16$ edges.  If there is an edge joining any two vertices in $S_1$, the number of edges deleted is at most $10+2+3=15$.

\item \textbf{The vertex $e$ has one neighbor in $S_1$.} Without loss of generality, suppose $a$ is the neighbor of $e$, see Figure \ref{6-5 case}(ii).  The vertex $e$ can have at most one neighbor in $V(G)\backslash H$.  If the vertex $a$ has a neighbor in $V(G)\backslash H$, we have an $S_{3,3}$ with $xa$ as the backbone.  If there are no edges between the vertices in $S_1$, we have deleted at most $11+3+1=15$ edges.  If there is an edge joining any two vertices in $S_1$, the number of edges deleted is at most $11+3+1=15$.

\item \textbf{The vertex $e$ has two neighbors in $S_1$.}  Without loss of generality, suppose $e$ is the neighbor of $a$ and $d$, see Figure \ref{6-5 case}(iii). The vertex $e$ cannot have a neighbor in $V(G)\backslash H$, otherwise we have an $S_{3,3}$ with $ye$ as the backbone.  If either $a$ or $d$ has a neighbor in $V(G)\backslash H$, we have an $S_{3,3}$ with $xa$ or $xd$ as the backbone, respectively. Suppose there is no edge between the vertices in $S_1$. The total number of edges deleted is at most $12+2=14$. Suppose $a$ and $b$ are adjacent, then $b$ cannot have a neighbor in $V(G)\backslash H$.  Similarly, for the other edges in $S_1$ except $ad$, which is still possible without any extra restrictions.  If there is an edge joining any two vertices in $S_1$, the total number of edges deleted is at most $12+3=15$. 
\end{enumerate}
Thus, by induction $$e(G)=e(G-H)+16\leq \frac{5}{2}(n-7)-2+16< \frac{5}{2}n-2,$$ and we are done.
\end{proof}

\begin{figure}
\centering
\begin{tikzpicture}[scale=0.1]
            \coordinate (x) at (-12,0);
            \coordinate (y) at (12,0);
            \coordinate (a) at (0,18);
            \coordinate (b) at (0,9);
            \coordinate (c) at (0,-9);
            \coordinate (d) at (21,6); 
            \coordinate (e) at (21,0); 
            \draw[thick] (x) -- (a) -- (y) -- (x);
            \draw[thick] (x) -- (b) -- (y);
            \draw[thick] (x) -- (c) -- (y);
            \draw[thick] (d) -- (y);
            \draw[thick] (e) -- (y);
            \draw[fill=black] (x) circle(\circ)  node[label=below:$x$] {};
            \draw[fill=black] (y) circle(\circ)  node[label=below:$y$] {};
            \draw[fill=black] (a) circle(\circ)  node[label=below:$a$] {};
            \draw[fill=black] (b) circle(\circ)  node[label=below:$b$] {};
            \draw[fill=black] (c) circle(\circ)  node[label=above:$c$] {};
            \draw[fill=black] (d) circle(\circ)  node[label=above:$d$] {};
            \draw[fill=black] (e) circle(\circ)  node[label=below:$e$] {};
            \node at (0,-18){(i)};
    \end{tikzpicture}\qquad
\begin{tikzpicture}[scale=0.1]
            \coordinate (x) at (-12,0);
            \coordinate (y) at (12,0);
            \coordinate (a) at (0,18);
            \coordinate (b) at (0,9);
            \coordinate (c) at (0,-9);
            \coordinate (d) at (21,6); 
            \coordinate (e) at (21,0); 
            \draw[thick] (x) -- (a) -- (y) -- (x);
            \draw[thick] (x) -- (b) -- (y);
            \draw[thick] (x) -- (c) -- (y);
            \draw[thick] (d) -- (y);
            \draw[thick] (d) -- (a);
            \draw[thick] (e) -- (y);
            \draw[fill=black] (x) circle(\circ)  node[label=below:$x$] {};
            \draw[fill=black] (y) circle(\circ)  node[label=below:$y$] {};
            \draw[fill=black] (a) circle(\circ)  node[label=below:$a$] {};
            \draw[fill=black] (b) circle(\circ)  node[label=below:$b$] {};
            \draw[fill=black] (c) circle(\circ)  node[label=above:$c$] {};
            \draw[fill=black] (d) circle(\circ)  node[label=above:$d$] {};
            \draw[fill=black] (e) circle(\circ)  node[label=below:$e$] {};
            \node at (0,-18){(ii)};
    \end{tikzpicture}
\qquad
\begin{tikzpicture}[scale=0.1]
            \coordinate (x) at (-12,0);
            \coordinate (y) at (12,0);
            \coordinate (a) at (0,18);
            \coordinate (b) at (0,9);
            \coordinate (c) at (0,-9);
            \coordinate (d) at (21,6); 
            \coordinate (e) at (21,0); 
            \draw[thick] (x) -- (a) -- (y) -- (x);
            \draw[thick] (x) -- (b) -- (y);
            \draw[thick] (x) -- (c) -- (y);
            \draw[thick] (d) -- (y);
            \draw[thick] (d) -- (a);
            \draw[thick] (e) -- (y);
            \draw[thick](e)..controls (27,12) and (27,18) ..(a);
            \draw[fill=black] (x) circle(\circ)  node[label=below:$x$] {};
            \draw[fill=black] (y) circle(\circ)  node[label=below:$y$] {};
            \draw[fill=black] (a) circle(\circ)  node[label=below:$a$] {};
            \draw[fill=black] (b) circle(\circ)  node[label=below:$b$] {};
            \draw[fill=black] (c) circle(\circ)  node[label=above:$c$] {};
            \draw[fill=black] (d) circle(\circ)  node[label=above:$d$] {};
            \draw[fill=black] (e) circle(\circ)  node[label=below:$e$] {};
            \node at (0,-18){(iii)};
    \end{tikzpicture}
\qquad
\begin{tikzpicture}[scale=0.1]
            \coordinate (x) at (-12,0);
            \coordinate (y) at (12,0);
            \coordinate (a) at (0,18);
            \coordinate (b) at (0,9);
            \coordinate (c) at (0,-9);
            \coordinate (d) at (21,6); 
            \coordinate (e) at (21,0); 
            \draw[thick] (x) -- (a) -- (y) -- (x);
            \draw[thick] (x) -- (b) -- (y);
            \draw[thick] (x) -- (c) -- (y);
            \draw[thick] (d) -- (y);
            \draw[thick] (d) -- (a);
            \draw[thick] (e) -- (y);
            \draw[thick](e)--(c);
            \draw[fill=black] (x) circle(\circ)  node[label=above:$x$] {};
            \draw[fill=black] (y) circle(\circ)  node[label=above:$y$] {};
            \draw[fill=black] (a) circle(\circ)  node[label=below:$a$] {};
            \draw[fill=black] (b) circle(\circ)  node[label=below:$b$] {};
            \draw[fill=black] (c) circle(\circ)  node[label=above:$c$] {};
            \draw[fill=black] (d) circle(\circ)  node[label=above:$d$] {};
            \draw[fill=black] (e) circle(\circ)  node[label=below:$e$] {};
            \node at (0,-18){(iv)};
    \end{tikzpicture}
\qquad
\begin{tikzpicture}[scale=0.1]
            \coordinate (x) at (-12,0);
            \coordinate (y) at (12,0);
            \coordinate (a) at (0,18);
            \coordinate (b) at (0,9);
            \coordinate (c) at (0,-9);
            \coordinate (d) at (21,6); 
            \coordinate (e) at (21,0); 
            \draw[thick] (x) -- (a) -- (y) -- (x);
            \draw[thick] (x) -- (b) -- (y);
            \draw[thick] (x) -- (c) -- (y);
            \draw[thick] (d) -- (y);
            \draw[thick] (d) -- (a);
            \draw[thick] (e) -- (y);
            \draw[thick](d)..controls (27,0) and (27,-12) ..(c);
            \draw[fill=black] (x) circle(\circ)  node[label=above:$x$] {};
            \draw[fill=black] (y) circle(\circ)  node[label=above:$y$] {};
            \draw[fill=black] (a) circle(\circ)  node[label=below:$a$] {};
            \draw[fill=black] (b) circle(\circ)  node[label=below:$b$] {};
            \draw[fill=black] (c) circle(\circ)  node[label=above:$c$] {};
            \draw[fill=black] (d) circle(\circ)  node[label=above:$d$] {};
            \draw[fill=black] (e) circle(\circ)  node[label=below:$e$] {};
            \node at (0,-18){(v)};
    \end{tikzpicture}
\qquad
\begin{tikzpicture}[scale=0.1]
            \coordinate (x) at (-12,0);
            \coordinate (y) at (12,0);
            \coordinate (a) at (0,18);
            \coordinate (b) at (0,9);
            \coordinate (c) at (0,-9);
            \coordinate (d) at (21,6); 
            \coordinate (e) at (21,0); 
            \draw[thick] (x) -- (a) -- (y) -- (x);
            \draw[thick] (x) -- (b) -- (y);
            \draw[thick] (x) -- (c) -- (y);
            \draw[thick] (d) -- (y);
            \draw[thick] (d) -- (a);
            \draw[thick] (e) -- (y);
            \draw[thick] (e) -- (c);
            \draw[thick](d)..controls (27,0) and (27,-12) ..(c);
            \draw[fill=black] (x) circle(\circ)  node[label=above:$x$] {};
            \draw[fill=black] (y) circle(\circ)  node[label=above:$y$] {};
            \draw[fill=black] (a) circle(\circ)  node[label=below:$a$] {};
            \draw[fill=black] (b) circle(\circ)  node[label=below:$b$] {};
            \draw[fill=black] (c) circle(\circ)  node[label=above:$c$] {};
            \draw[fill=black] (d) circle(\circ)  node[label=above:$d$] {};
            \draw[fill=black] (e) circle(\circ)  node[label=below:$e$] {};
            \node at (0,-18){(vi)};
    \end{tikzpicture}
\qquad
\begin{tikzpicture}[scale=0.1]
            \coordinate (x) at (-12,0);
            \coordinate (y) at (12,0);
            \coordinate (a) at (0,18);
            \coordinate (c) at (0,-27);
            \coordinate (b) at (0,-3);
            \coordinate (d) at (21,6); 
            \coordinate (e) at (3,-6); 
            \draw[thick] (x) -- (a) -- (y) -- (x);
            \draw[thick] (x) -- (b) -- (y);
            \draw[thick] (x) -- (c) -- (y);
            \draw[thick] (d) -- (y);
            \draw[thick] (d) -- (a);
            \draw[thick] (e) -- (y);
            \draw[thick] (e) -- (b);
            \draw[thick](d)..controls (27,0) and (27,-12) ..(c);
            \draw[fill=black] (x) circle(\circ)  node[label=above:$x$] {};
            \draw[fill=black] (y) circle(\circ)  node[label=above:$y$] {};
            \draw[fill=black] (a) circle(\circ)  node[label=below:$a$] {};
            \draw[fill=black] (b) circle(\circ)  node[label={[label distance=.01cm]225:$b$}] {};
            \draw[fill=black] (c) circle(\circ)  node[label=below:$c$] {};
            \draw[fill=black] (d) circle(\circ)  node[label=above:$d$] {};
            \draw[fill=black] (e) circle(\circ)  node[label=right:$e$] {};
            \node at (0,-36){(vii)};
    \end{tikzpicture}\qquad
\begin{tikzpicture}[scale=0.1]
            \coordinate (x) at (-12,0);
            \coordinate (y) at (12,0);
            \coordinate (a) at (0,18);
            \coordinate (c) at (0,-27);
            \coordinate (b) at (0,-3);
            \coordinate (d) at (21,6); 
            \coordinate (e) at (3,-6); 
            \draw[thick] (x) -- (a) -- (y) -- (x);
            \draw[thick] (x) -- (b) -- (y);
            \draw[thick] (x) -- (c) -- (y);
            \draw[thick] (d) -- (y);
            \draw[thick] (d) -- (a);
            \draw[thick] (e) -- (y);
            \draw[thick] (e) -- (c);
            \draw[thick] (e) -- (b);
            \draw[thick](d)..controls (27,0) and (27,-12) ..(c);
            \draw[fill=black] (x) circle(\circ)  node[label=above:$x$] {};
            \draw[fill=black] (y) circle(\circ)  node[label=above:$y$] {};
            \draw[fill=black] (a) circle(\circ)  node[label=below:$a$] {};
            \draw[fill=black] (b) circle(\circ)  node[label={[label distance=.01cm]225:$b$}] {};
            \draw[fill=black] (c) circle(\circ)  node[label=below:$c$] {};
            \draw[fill=black] (d) circle(\circ)  node[label=above:$d$] {};
            \draw[fill=black] (e) circle(\circ)  node[label=right:$e$] {};
            \node at (0,-36){(viii)};
    \end{tikzpicture}
\captionsetup{singlelinecheck=off}
\caption[foo bar]{The graph $G$ has a $4-6$ edge $xy$.
\begin{enumerate}[label=(\roman*)]
    \item The vertices $d$ and $e$ have no neighbors in $S_1$.
    \item The vertex $d$ has one neighbor in $S_1$, and $f$ has none.
    \item Both the vertices $d$ and $e$ have one common neighbor in $S_1$.
    \item The vertices $d$ and $e$ have one distinct neighbor in $S_1$
    \item The vertex $d$ has two neighbors in $S_1$, while $e$ has none.
    \item The vertex $d$ is the neighbor of $a$ and $c$, while $e$ is the neighbor of $c$.
    \item The vertex $d$ is the neighbor of $a$ and $c$, while $e$ is the neighbor of $b$.
    \item Both the vertices $d$ and $e$ have two neighbors in $S_1$.
\end{enumerate}}
\label{6-4}
\end{figure}

\begin{claim}\label{clm4-6}
If there is a $4$-$6$ edge in $G$, then $e(G)\leq \frac{5}{2}n-2.$
\end{claim}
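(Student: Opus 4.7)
The plan is to mirror the strategy of Claims \ref{clm6-6} and \ref{clm5-6}: locate a $7$-vertex set $H$ around the $4$-$6$ edge, delete it, and use the $S_{3,3}$-free condition together with planarity to show that at most $17$ edges disappear, which is enough for the inductive step.

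Let $xy$ be the $4$-$6$ edge with $d(x)=4$ and $d(y)=6$. Applying $S_{3,3}$-freeness to the backbone $xy$ gives $|N(x)\cup N(y)\setminus\{x,y\}|\le 5$; combined with $d(x)=4$ this forces $|N(x)\cap N(y)|=3$, so I write $N(x)\cap N(y)=\{a,b,c\}$ and $N(y)\setminus(N(x)\cup\{x\})=\{d,e\}$, and set $S_1=\{a,b,c\}$, $H=\{x,y\}\cup S_1\cup\{d,e\}$. Because $x$ and $y$ have all their neighbors in $H$, deleting $H$ removes exactly the $9$ forced edges $xy,xa,xb,xc,ya,yb,yc,yd,ye$, plus the edges inside $\{a,b,c,d,e\}$, plus the outer edges from $\{a,b,c,d,e\}$ to $G-H$. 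The core step is then to bound these extra edges using $S_{3,3}$-freeness. For $v\in S_1$ with $d(v)\ge 4$, the backbones $yv$ and $xv$ respectively force $N(v)\subseteq H$ and $d(v)\le 4+|N(v)\cap(S_1\setminus\{v\})|$, so $v$ contributes no outer edges. For $v\in\{d,e\}$ with $d(v)\ge 4$, the backbone $yv$ forces every non-$y$ neighbor of $v$ into $S_1\cup(\{d,e\}\setminus\{v\})$, so again $v$ has no outer edges and $d(v)\le 5$. Finally, the already-standing assumption that $G$ has no $3$-$3$ edge implies that if $d(d)=d(e)=3$ then $d\not\sim e$.

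With these tools I will run the case analysis displayed in Figure \ref{6-4}, splitting by the pair $(|N(d)\cap S_1|,|N(e)\cap S_1|)$ into Cases (i)--(viii). In each case I tally the edges incident with $H$: at most $3$ inside $S_1$, the fixed number of edges from $\{d,e\}$ to $S_1$ given by the case, the edge $de$ (present only in certain degree regimes), and the possible outer edges at each vertex of $\{a,b,c,d,e\}$, killed on any vertex forced into degree $\ge 4$ by the structure of the case. Two recurring observations control the count: every new edge from $\{d,e\}$ into $S_1$ drags its $S_1$-endpoint up to degree $\ge 4$ and so kills that vertex's outer edges; and the planarity bound $e(G[H])\le 3\cdot 7-6=15$ forbids the very dense sub-cases (for instance, the sub-case of (viii) in which $d,e$ share both their $S_1$-neighbors would create a $K_{3,3}$ on $\{x,d,e\}$ versus $\{y,a,c\}$). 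A case-by-case check then shows that the total number of deleted edges is at most $17$, so by induction
\[ e(G)\le e(G-H)+17\le \tfrac{5}{2}(n-7)-2+17=\tfrac{5}{2}n-\tfrac{5}{2}\le \tfrac{5}{2}n-2. \]

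The main obstacle will be the densest configurations, Cases (vi)--(viii), where $\{d,e\}$ together contribute three or four edges into $S_1$ and the forced edges inside $H$ already press against the planarity ceiling. In those cases one has to combine the degree restrictions on $d,e$ with planarity (to force at least one triangle edge of $S_1$ to be broken) while simultaneously arguing that any degree-$3$ vertex surviving in $S_1$ has its single free slot occupied by an $S_1$-edge rather than by an outer edge. Once these competing budgets are reconciled, the total count of deleted edges stays at most $17$ in every case and the induction closes.
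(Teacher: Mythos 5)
Your proposal follows essentially the same route as the paper: the same forced structure (three triangles on the $4$-$6$ edge, giving $S_1=\{a,b,c\}$ and the two extra neighbours $d,e$ of $y$), the same deleted set $H$ of seven vertices, the same outer-edge restrictions derived from the backbones $xv$ and $yv$, the same case split on $\bigl(|N(d)\cap S_1|,|N(e)\cap S_1|\bigr)$, and the same induction step; your target of at most $17$ deleted edges is slightly looser than the paper's count of $15$--$16$ but still closes the inequality $e(G)\le \tfrac{5}{2}(n-7)-2+17\le\tfrac{5}{2}n-2$. The only substantive refinement you add is the explicit $K_{3,3}$ planarity argument excluding the sub-case where $d$ and $e$ share both $S_1$-neighbours, which the paper passes over silently.
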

\begin{proof}
Let $xy$ be a $4-6$ edge in $G$.  Since $G$ is an $S_{3,3}$-free plane graph, $xy$ must be contained in $3$ triangles, see Figure \ref{6-4}.  Let $a,b$ and $c$ be the vertices in $G$ which are adjacent to both $x$ and $y$. Let $d$ and $e$ be the vertices adjacent to $y$ but not to $x$. Let $S_1=\{a,b,c\}$ and $H=\{x,y,a,b,c,d,e\}$.  Delete the vertices in $H$.  The vertices $d$ and $e$ can have at most two neighbors in $V(G)\backslash H$ each.  The vertices in $S_1$ can have at most one neighbor in $V(G)\backslash H$ each.  If there is an edge joining any two vertices in $S_1$, say $ab$.  Similarly, as before, the vertices $a$ and $b$ cannot have a neighbor in $V(G)\backslash H$.  We distinguish the cases based on the neighbors of $d$ and $e$ as follows:
\begin{enumerate}
         \item \textbf{The vertices $d$ and $e$ have no neighbors in $S_1$}, see Figure \ref{6-4}(i).
         If the vertices $d$ and $e$ are adjacent, they can have at most one neighbor in $V(G)\backslash H$ each.  Otherwise, we have an $S_{3,3}$ with $dy$ (or $ey$) as the backbone.  If there are no edges between the vertices in $S_1$, the number of edges deleted is at most $9+3+4=16$.  If there is an edge joining any two vertices in $S_1$, the number of edges deleted is at most $9+2+4=15$.
         
         \item \textbf{One of the vertices $d$ or $e$ has one neighbor in $S_1$, while the other has none.}
         Without loss of generality, suppose the vertices $a$ and $d$ are adjacent (see Figure \ref{6-4}(ii)).  The vertex $a$ cannot have a neighbor in $V(G)\backslash H$, and $d$ can have at most one neighbor in $V(G)\backslash H$.  If the vertices $d$ and $e$ are adjacent, then $d$ cannot have a neighbor in $V(G)\backslash H$ and $e$ can have at most one neighbor in $V(G)\backslash H$.  Otherwise, we have an $S_{3,3}$ with $dy$ (or $ey$) as the backbone.  If there are no edges between the vertices in $S_1$, the  number of edges deleted is at most $10+2+3=15$.  If there is an edge joining any two vertices in $S_1$, the number of edges deleted is at most $10+2+3=15$.  
         
         \item \textbf{Both the vertices $d$ and $e$ have one neighbor in $S_1$.}
         There are two possibilities. In the first case, without loss of generality, suppose $a$ is the common neighbor of $d$ and $e$ (see Figure \ref{6-4}(iii)).  Similarly, as before, $a$ cannot have a neighbor in $V(G)\backslash H$, and $d$ and $e$ can have at most one neighbor in $V(G)\backslash H$ each.  If the vertices $d$ and $e$ are adjacent, they cannot have a neighbor in $V(G)\backslash H$.  If there are no edges between the vertices in $S_1$, the number of edges deleted is at most $11+2+2=15$.  If there is an edge joining any two vertices in $S_1$, the number of edges deleted is at most $11+2+2=15$.
         
         Without loss of generality, suppose $a$ is the neighbor of $d$ while $c$ is the neighbor of $e$ (see Figure \ref{6-4}(iv)). Similarly, as before, the vertices $a$ and $c$ cannot have a neighbor in $V(G)\backslash H$, and $d$ and $e$ can have at most one neighbor in $V(G)\backslash H$ each.  If the vertices $d$ and $e$ are adjacent, they cannot have a neighbor in $V(G)\backslash H$.  If there are no edges between the vertices in $S_1$, the number of edges deleted is at most $11+1+2=14$.   If $a$ and $b$ are adjacent, then $b$ does not have a neighbor in $V(G)\backslash H$.  Similarly, for the edge $bc$.  The vertices $a$ and $c$ can be adjacent without any constraints.  Thus, if there is an edge joining any two vertices in $S_1$, the number of edges deleted is at most $11+2+2=15$.

         \item \textbf{One of the vertices $d$ or $e$ has two neighbors in $S_1$, while the other has none.}  Without loss of generality, suppose $d$ is the neighbor of $a$ and $c$, while $e$ has no neighbors in $S_1$ (see Figure \ref{6-4}(v)).  The vertex $d$ cannot have a neighbor in $V(G)\backslash H$.  Similarly, as before, the vertices $a$ and $c$ cannot have a neighbor in $V(G)\backslash H$, whereas $e$ can have at most two neighbors in $V(G)\backslash H$.  If $d$ and $e$ are adjacent, then $e$ can have at most one neighbor in $V(G)\backslash H$.  If there are no edges between the vertices in $S_1$, the number of edges deleted is at most $11+1+2=14$.  If $a$ and $b$ are adjacent, then $b$ does not have a neighbor in $V(G)\backslash H$.  Similarly, for the edge $bc$.  The vertices $a$ and $c$ can be adjacent without any constraints.  Thus, if there is an edge joining any two vertices in $S_1$, the number of edges deleted is at most $11+2+2=15$.

         \item \textbf{One of the vertices $d$ or $e$ has two neighbors in $S_1$, while the other has one.}
         There are two possibilities. In the first case, without loss of generality, suppose $d$ is the neighbor of $a$ and $c$, while $e$ is the neighbor of $c$ (see Figure \ref{6-4}(vi)).  Similarly, as before, the vertices $a,c$ and $d$ cannot have a neighbor in $V(G)\backslash H$, whereas $e$ can have at most one neighbor in $V(G)\backslash H$.  If $d$ and $e$ are adjacent, then $e$ cannot have a neighbor in $V(G)\backslash H$.  If there are no edges between the vertices in $S_1$, the number of edges deleted is at most $12+1+1=14$.  In the other case, the vertices $a$ and $c$ can be adjacent without any constraints.  Thus, if there is an edge joining any two vertices in $S_1$, the number of edges deleted is at most $12+2+1=15$.

         Without loss of generality, suppose $d$ is the neighbor of $a$ and $c$, while $e$ is the neighbor of $b$ (see Figure \ref{6-4}(vii)).  Similarly, as before, the vertices $a,b,c$ and $d$ cannot have a neighbor in $V(G)\backslash H$, whereas $e$ can have at most one neighbor in $V(G)\backslash H$.  If $d$ and $e$ are adjacent, then $e$ cannot have a neighbor in $V(G)\backslash H$.  If there are no edges between the vertices in $S_1$, the number of edges deleted is at most $12+1=13$.  In the other case, the vertices $a,b$ and $c$ can be adjacent without any constraints.  Hence, the  number of edges deleted is at most $12+2+1=15$.
         
         \item \textbf{Both the vertices $d$ and $e$ have two neighbors in $S_1$.}
         Without loss of generality, suppose $d$ is the neighbor of $a$ and $c$, while $e$ is the neighbor of $b$ and $c$ (see Figure \ref{6-4}(viii)).  Similarly, as before, the vertices $a, b, c, d$ and $e$ cannot have a neighbor in $V(G)\backslash H$.  The vertex $c$ can be adjacent to $a$ and $b$ without any constraints.  Hence, the number of edges deleted is at most $13+2=15$.  
\end{enumerate}
Thus, by induction $$e(G)=e(G-H)+16\leq \frac{5}{2}(n-7)-2+16\leq \frac{5}{2}n-2,$$ and we are done.
\end{proof}

The following lemma completes the proof of the Theorem \ref{doublestarsmain}(v):
\begin{lemma}\label{cm5}
Let $G$ be an $S_{3,3}$-free plane graph on $n$ vertices, then $e(G)\leq \frac{5}{2}n-2.$ \end{lemma}
\begin{proof}
 Let  $A=\{x\in V(G)\ | \ d(x)=3\}$, $B=\{x\in V(G)\ | \ d(x)=4 \ \text{or} \ d(y)=5\}$ and   $C=\{x\in V(G)\ |\ d(x)\geq 6\}.$
 
 Since there is no $3$-$3$ edge, the vertices in $A$ are independent. From Claims \ref{clm5-6} and \ref{clm4-6}, there is no edge between the sets $B$ and $C$. Moreover, $C$ is independent by Claim \ref{clm6-6}. The distribution of the edges in $G$ is shown in Figure \ref{fg5}. 
 \begin{figure}[ht]
\centering
\captionsetup{justification=centering}
\begin{tikzpicture}[scale=0.125]
\draw[rotate around={90:(0,-7.5)}, thick,red] (0,-7.5) ellipse (23 and 9);
\draw[rotate around={90:(-30,-7.5)}, thick, red] (-30,-7.5) ellipse (23 and 9);
\draw[rotate around={90:(30,-7.5)},  thick,red] (30,-7.5) ellipse (23 and 9);
\draw[fill=black](0,10)circle(25pt);
\draw[fill=black](0,-10)circle(25pt);
\draw[fill=black](0,0)circle(25pt);
\draw[fill=black](0,-25)circle(25pt);
\draw[fill=black](0,-15)circle(5pt);
\draw[fill=black](0,-18)circle(5pt);
\draw[fill=black](0,-21)circle(5pt);
\draw[thick](0,10)--(10,12)(0,10)--(-10,8)(0,10)--(10,8);
\draw[thick](0,0)--(10,0)(0,0)--(10,2)(0,0)--(10,4);
\draw[thick](0,-10)--(-10,-10)(0,-10)--(-10,-8)(0,-10)--(-10,-12);
\draw[thick](0,-25)--(-10,-27)(0,-25)--(-10,-23)(0,-25)--(10,-25);
\draw[fill=black](30,10)circle(25pt);
\draw[fill=black](30,-10)circle(25pt);
\draw[fill=black](30,0)circle(25pt);
\draw[fill=black](30,-25)circle(25pt);
\draw[fill=black](30,-15)circle(5pt);
\draw[fill=black](30,-18)circle(5pt);
\draw[fill=black](30,-21)circle(5pt);
\draw[thick](30,10)--(20,18)(30,10)--(20,16)(30,10)--(20,14)(30,10)--(20,12)(30,10)--(20,8)(30,10)--(20,6)(30,10)--(20,10);
\draw[thick](30,0)--(20,4)(30,0)--(20,2)(30,0)--(20,0)(30,0)--(20,-2)(30,0)--(20,-4)(30,0)--(20,-6);
\draw[thick](30,-10)--(20,-8)(30,-10)--(20,-10)(30,-10)--(20,-12)(30,-10)--(20,-14)(30,-10)--(20,-16)(30,-10)--(20,-18)(30,-10)--(20,-20);
\draw[thick](30,-25)--(20,-22)(30,-25)--(20,-24)(30,-25)--(20,-26)(30,-25)--(20,-28)(30,-25)--(20,-30)(30,-25)--(20,-32);
\draw[fill=black](-30,10)circle(25pt);
\draw[fill=black](-30,-10)circle(25pt);
\draw[fill=black](-30,0)circle(25pt);
\draw[fill=black](-30,-25)circle(25pt);
\draw[fill=black](-30,-15)circle(5pt);
\draw[fill=black](-30,-18)circle(5pt);
\draw[fill=black](-30,-21)circle(5pt);
\draw[thick](-30,10)--(-20,10)(-30,10)--(-20,12)(-30,10)--(-20,8);
\draw[thick](-30,0)--(-20,2)(-30,0)--(-20,-2);
\draw[thick](-30,-10)--(-20,-8)(-30,-10)--(-20,-12);
\draw[thick](-30,-25)--(-20,-22)(-30,-25)--(-20,-24)(-30,-25)--(-20,-26);
\draw[thick](-30,10)..controls (-35, 7) and (-35,3) ..(-30,0);
\draw[thick](-30,10)..controls (-25, 5) and (-25,-5) ..(-30,-10);
\draw[thick](-30,-10)..controls (-25, -13) and (-25,-20) ..(-30,-25);
\draw[thick](-30,0)..controls (-35, -3) and (-35,-7) ..(-30,-10);
\node at (0,20) {$A$};
\node at (-30,20) {$B$};
\node at (30,20) {$C$};
\end{tikzpicture}
\caption{A graph showing the distribution of edges in $G$.}
\label{fg5}
\end{figure}
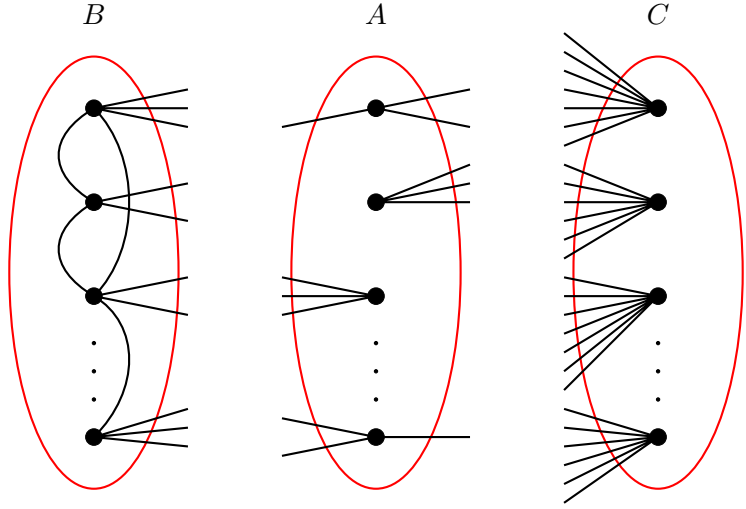

Let $|A|=a,$ $|B|=b$ and $|C|=c$. Let $x$ be the number of edges between the sets $A$ and $B$, i.e., $e(A,B)=x$. Since the maximum degree in $B$ is $5$, we have $2e(G[B])=\sum_{v\in B}d(v)-x$ which implies $e(G)\leq \frac{5b-x}{2}$. Each vertex in $A$ has degree $3$ and the vertices in $A$ are independent, hence $e(A,C)= 3a-x$. Thus, the number of edges in $G$ is 
\begin{align*}
e(G)=e(G[B])+e(A,B)+e(A,C)\leq \frac{5b-x}{2}+x+3a-x=\frac{5}{2}a+\frac{5}{2}b+\frac{a-x}{2}.
\end{align*}
On the other hand, since $G$ is a plane graph and the graph induced by the vertices in $A$ and the vertices in $C$ is bipartite, $e(A,C)=3a-x\leq 2(a+c)-4$. This implies that $\frac{a-x}{2}\leq c-2= \frac{5}{2}c-\frac{3}{2}c-2$ for all $c\geq 0$. Therefore, using the inequality in (\ref{eq1}), we get
\begin{align}\label{eq1}
e(G)\leq \frac{5}{2}a+\frac{5}{2}b+\frac{5}{2}c-\left(\frac{3}{2}c+2\right)=\frac{5}{2}(a+b+c)-\left(\frac{3}{2}c+2\right)\leq\frac{5}{2}n-2.    
\end{align}
The last inequality in~(\ref{eq1}) holds (and hence Lemma~\ref{cm5}) if $a\neq 0$ and $c\neq 0$. To finish the proof, we distinguish the following two cases:

\textbf{Case $1$: $a\neq 0$ and $c=0$.}
Observe that $e(G)\leq \frac{5(n-a)+3a}{2}=\frac{5}{2}n-a$. If $a\geq 2$, then we are done. Thus, $a=1$. Let the number of degree $4$ vertices in $G$ be $k$. Hence, $e(G)= \frac{5(n-k-1)+4k+3}{2}=\frac{5}{2}n-\frac{k}{2}-1$. If $k\geq 2$, then we are done. 

Let the number of degree $4$ vertices in $G$ be at most $1$. Let $A=\{x\}$ and $N(x)=\{x_1,x_2,x_3\}$. Let $d(x_i)=5$, for every $i\in\{1,2,3\}$. Considering that there is at most one degree $4$ vertex in $G$, it is easy to find a degree $5$ vertex in $\cup_{i=1}^{3}N(x_i)$, such that all its $5$ neighboring vertices are of degree $5$. Moreover, the same property holds if one vertex in $N(x)$ is of degree $4$. Let $v$ be a degree $5$ vertex in $G$, such that all its $5$ neighboring vertices are of degree $5$. Let $N(v)=\{x_1,x_2,x_3,x_4,x_5\}$, such that a plane drawing of $G$ results in a clockwise alignment of the vertices $x_1,x_2,x_3,x_4,x_5$ around $v$. Since $G$ is an $S_{3,3}$-free plane graph, every $5$-$5$ edge in $G$ must be contained in at least $3$ triangles. Thus, an edge $x_1v$ must be contained in at least $3$ triangles. This implies, $x_1$ must be adjacent to at least one vertex in $\{x_3,x_4\}$. Without loss of generality, assume $x_1$ and $x_3$ are adjacent. Then the $5$-$5$ edge $x_2v$ is contained in at most $2$ triangles, which results in an $S_{3,3}$ in $G$ with $x_2v$ as the backbone.   

\textbf{Case $2$: $a=0$.}
Let the number of degree $4$ vertices in $G$ be $k$. Thus, $e(G)=\frac{5(n-k)+4k}{2}=\frac{5}{2}n-\frac{k}{2}$. If the number of degree $4$ vertices is at least $4$, then $e(G)\leq \frac{5}{2}n-2$ and we are done. Now assume that the number of degree $4$ vertices in $G$ is at most $3$. Notice that, $v(G)\geq 8$. Otherwise, taking any maximal planar graph on $n$ vertices, it can be checked that $3n-6<\frac{5}{2}n-2$. 

Let $v$ be a degree $5$ vertex in $G$, and $N(v)=\{x_1,x_2,x_3,x_4,x_5\}$. At least two vertices in $N(v)$ must be of degree $5$. Otherwise, the number of degree $4$ vertices is at least $4$ and we are done. Let the plane drawing of $G$ result in a clockwise alignment of the vertices $x_1,x_2,x_3,x_4,x_5$ around $v$.  There are exactly $2$ vertices in $N(v)$, which are of degree $5$. Indeed, suppose that the number of degree $5$ vertices is at least $3$. We can assume that for some $i\in [5]$, $d(x_i)=d(x_{i+1})=5$. Without loss of generality, assume that these vertices are $x_1$ and $x_2$. Since $x_1v$ and $x_2v$ are $5$-$5$ edges, they must be contained in at least $3$ triangles. Thus, both $x_1$ and $x_2$ must be adjacent to $x_4$. On the other hand, it is easy to see that $x_3v$ and $x_5v$ are $4$-$5$ edges. Thus, the vertex $x_3$ must be adjacent to $x_2$ and $x_4$. Similarly, the vertex $x_5$ must be adjacent to $x_1$ and $x_4$.  Since $d(x_3)=4$, there must be a vertex $x_6$, such that $x_3x_6\in E(G)$. If $d(x_6)=5$, then $x_6$ must be adjacent to $x_4$. This is impossible, as $d(x_4)=5$. Hence, $d(x_6)=4$. Similarly, we have another vertex $x_7$ adjacent to $x_5$ and $d(x_7)=4$. This is a contradiction, as we found $4$ vertices of degree $4$, namely $x_3,x_5,x_6$ and $x_7$.

Thus, we can assume that only two vertices in $N(v)$ are of degree $5$. Moreover, the vertices are not consecutive with respect to the alignment in the clockwise direction. Without loss of generality, assume that the vertices are $x_2$ and $x_4$. It can be checked that the vertices $x_2$ and $x_4$ are adjacent. Since $x_1v$ and $x_5v$ are $4$-$5$ edges, then the edges $x_1x_2,x_1x_5$ and $x_5x_4$ are in $G$. Since $d(x_3)=4$, there must exist a vertex $x_6$ adjacent to the vertex $x_3$. If this vertex is of degree $4$, then it is a contradiction as we found $4$ vertices of degree $4$, namely $x_1,x_3,x_5$ and $x_6$. Hence, $d(x_6)=5$ and the edges $x_2x_6$ and $x_4x_6$ are in $G$. Since $d(x_1)$ is $4$, there must exist a vertex $x_7$ such that $x_1x_7\in E(G)$. If $d(x_7)$ is $5$, then $x_7$ is adjacent to $x_2$ and $d(x_2)\geq 6$, which is a contradiction. Hence, $x_7$ must be a vertex of degree $4$. This is a contradiction, as we found $4$ vertices of degree $4$, namely $x_1,x_3,x_5$ and $x_7$. This completes the proof of Claim \ref{cm5}, and subsequently the proof of Theorem \ref{doublestarsmain}(v). 
\end{proof}

\section{Planar Tur\'an number of \texorpdfstring{$S_{3,4}$}{S3,4}}
\begin{proof}[Proof of the Theorem \ref{doublestarsmain}(vi)] 
Let $G$ be an $n$-vertex $S_{3,4}$-free plane graph.  Since $S_{3,4}$ contains $9$ vertices, a maximal planar graph with $n\leq 8$ vertices, does not contain an $S_{3,4}$. Let $8|n$. Consider the plane graph consisting of $\frac{n}{8}$ disjoint copies of maximal planar graphs on $8$ vertices.  This graph does not contain an $S_{3,4}$. Hence, $\text{ex}_{\mathcal{P}}(n,S_{3,4})\geq\frac{9}{4}n$.  

\begin{claim}
Let $G$ be an $S_{3,4}$ on $n$ $(1\leq n\leq 42)$ vertices.  The number of edges in $G$ is at most $\frac{20}{7}n$.
\end{claim}
\begin{proof}
Recall that, an $n$-vertex maximal planar graph contains $3n-6$ edges. Since $3n-6\leq \frac{20}{7}n$ for $n\leq 42$,  $e(G)\leq \frac{20}{7}n$ holds for all $n$, $1\leq n\leq 42$.
\end{proof}

The following claims deal with the different cases of degree pairs in $G$:
\begin{claim}\label{S3,4_8}
No vertex in $G$ with a degree at least $8$ is adjacent to a vertex of degree at least $4$.
\end{claim}
\begin{proof}
 Suppose not. Let $xy$ be an edge in $G$ such that $d(x)\geq 8$ and $d(y)\geq 4$. Obviously, there are three vertices in $V(G)\backslash \{x\}$, say $y_1,y_2$ and $y_3$, which are adjacent to $y$. Since $|N(x)\backslash\{y\}|\geq 7$, there are four vertices $x_1,x_2,x_3$ and $x_4$, not in $\{y,y_1,y_2,y_3\}$ which are adjacent to $x$. This implies we got an $S_{3,4}$ in $G$ with backbone $xy$ and leaf-sets $\{x_1,x_2,x_3,x_4\}$ and $\{y_1,y_2,y_3\}$, respectively, which is a contradiction.   This completes the proof.
\end{proof}

\begin{claim}\label{S3,4_7,7}
If there is a $7$-$7$, $6$-$7$, $5$-$7$ and $6$-$6$ edge in $G$, then $e(G)\leq \frac{20}{7}n$.
\end{claim}
\begin{proof}
The proofs are similar to the Claims \ref{clm6-6}, \ref{clm5-6}, \ref{clm4-6}, and \ref{clm6-6s_2,5} respectively.  We skip it here for conciseness, but it is provided in the Appendix \ref{appendix}.
\end{proof}

Take $x,y\in V(G)$.  By the previous claims, if $d(x)+d(y)\geq 12$, we are done by induction.  Assume that $d(x)+d(y)\leq 11$.  Summing it over all the edge pairs in $G$, we have $11e\geq \sum_{xy\in E(G)}\left(d(x)+d(y)\right)=\sum_{x\in V(G)}(d(x))^2\geq n\overline{d}^2=n(\frac{2e}{n})^2$, where $\overline{d}$ is the average degree in $G$.  This gives us $e\leq \frac{11}{4}n\leq \frac{20}{7}n$ for $n\geq1$.  
\end{proof}

\section{Concluding remarks and Conclusions}

Concerning the exact value of $\ex_{\mathcal{P}}(n,S_{3,3})$, we conjecture the following:
\begin{conjecture}
\begin{equation*}\ex_{\p}(n,S_{3,3})=
\begin{cases}
3n-6, &\text{ if $3\leq n\leq 7$,}\\
16, &\text{if $n=8$,}\\
18, &\text{if $n=9$,}\\
\left\lfloor{\frac{5}{2}n}\right\rfloor-5, &\text{otherwise}.
\end{cases}
\end{equation*}
\end{conjecture}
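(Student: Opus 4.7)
The plan is to prove the conjecture by strong induction on $n$, with $f(n)$ denoting the conjectured value of $\ex_{\p}(n,S_{3,3})$. The lower bound for $n \geq 10$ is already realized by the construction in Figure~\ref{exte1}, and for $n \leq 7$ any maximal planar graph is $S_{3,3}$-free by vertex count. The base cases $n = 8, 9$ must be settled by an exhaustive analysis of planar graphs with $17$--$18$ or $19$--$20$ edges respectively; at such densities the average degree forces either a $3$-$3$ edge or a vertex pair $xy$ with $d(x)+d(y) \geq 9$, and one verifies by hand that a copy of $S_{3,3}$ is always forced. Matching lower-bound constructions for $n = 8, 9$ (which are not instances of the generic Figure~\ref{exte1} family, since that construction yields only $15$ and $17$ edges respectively) would have to be exhibited separately.

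For the inductive step, fix $n \geq 10$ and let $G$ be an $n$-vertex $S_{3,3}$-free planar graph. The key arithmetic is $f(n) - f(n-1) = 3$ when $n$ is even, $f(n) - f(n-1) = 2$ when $n$ is odd, and $f(n) - f(n-2) = 5$ regardless of parity. Three clean reductions follow: (i) if $\delta(G) \leq 2$, remove a minimum-degree vertex $v$, since $d(v) \leq 2 \leq f(n) - f(n-1)$; (ii) if $G$ contains a $3$-$3$ edge $uv$, delete both endpoints, losing exactly $d(u) + d(v) - 1 = 5 = f(n) - f(n-2)$ edges; (iii) if $n$ is even and $G$ has a vertex of degree $3$, its three incident edges fit within the budget $f(n) - f(n-1) = 3$, so ordinary induction closes the case. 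The remaining configuration is $\delta(G) \geq 3$, no $3$-$3$ edge, and either $n$ odd or $\delta(G) \geq 4$. Here I would invoke the partition $V(G) = A \cup B \cup C$ from Lemma~\ref{cm5}; combined with Claims~\ref{clm6-6}, \ref{clm5-6}, and~\ref{clm4-6} this yields $e(G) \leq \tfrac{5}{2}(a+b+c) - (\tfrac{3}{2}c + 2)$. When $c \geq 2$ we immediately get $e(G) \leq \tfrac{5}{2}n - 5 \leq f(n)$, so the only remaining subcase is $c \in \{0, 1\}$, in which the existing bound $\tfrac{5}{2}n - 2$ must be sharpened by $3$.

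The principal obstacle is precisely this sharpening when $c \leq 1$. Unlike the $-2$ bound, which tolerates losing count of a few degree-$4$ vertices, the $-5$ bound is tight against the extremal construction, so no slack is available anywhere. I would attack it by a refined analysis of the near-$5$-regular structure appearing in Case~2 of Lemma~\ref{cm5}: that argument already shows that if there are only a handful of degree-$4$ vertices then triangle counts around a degree-$5$ vertex force a contradiction, and I expect the same style of local argument, pushed harder, to rule out the extra $3$ edges beyond the $-2$ threshold. The $c = 1$ subcase is analogous, using that under the hypotheses of Lemma~\ref{cm5} the unique high-degree vertex has all its neighbors in $A$, so the bipartite planarity bound on $e(A,C)$ produces the needed savings. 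Expect the bookkeeping, along with several small sporadic cases that must be ruled out by direct inspection, to be the lengthy technical heart of the proof; the difficulty is one of completeness rather than of any single ingenious step.
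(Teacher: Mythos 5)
You should first note that the paper does not prove this statement at all: it appears only as a conjecture in the concluding section, and the theorem the paper actually establishes is the much weaker pair of bounds $\tfrac{5}{2}n-5\le \ex_{\p}(n,S_{3,3})\le \tfrac{5}{2}n-2$. So there is no proof in the paper to compare against, and your proposal must stand on its own. It does not. The decisive gap is exactly the one you flag yourself: the case $c\le 1$ (the near-$5$-regular configuration). When $a=c=0$ every vertex has degree $4$ or $5$, so $e(G)=\tfrac{5n-k}{2}$ with $k$ the number of degree-$4$ vertices, and reaching $\lfloor\tfrac{5}{2}n\rfloor-5$ requires $k\ge 10$. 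The paper's Case~2 argument in Lemma~\ref{cm5} only derives a contradiction when $k\le 3$; to close the conjecture you must rule out every $S_{3,3}$-free planar graph with $4\le k\le 9$, and ``the same style of local argument, pushed harder'' is an expectation, not an argument. Indeed, if any such graph exists the conjecture is false, so this case is not bookkeeping --- it is the entire open problem.

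There are also concrete arithmetic failures at the seam with the base cases, caused by $f(8)=16$ and $f(9)=18$ exceeding the generic formula. For $n=10$ one has $f(10)-f(8)=20-16=4<5$, and for $n=11$ one has $f(11)-f(9)=22-18=4<5$, so your reduction (ii) (deleting the two endpoints of a $3$-$3$ edge, which removes exactly $5$ edges) overshoots the budget at $n=10,11$. Likewise $f(10)-f(9)=2<3$, so reduction (iii) fails at $n=10$ even though $10$ is even. A similar boundary problem arises if you import the paper's Claims \ref{clm6-6}--\ref{clm4-6} under the strengthened induction hypothesis: each deletes $7$ vertices and up to $16$ edges, but for $n=14$ the budget is $f(14)-f(7)=30-15=15$. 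These small cases are presumably repairable by ad hoc analysis, but together with the unproved $n=8,9$ base cases and the missing extremal constructions there, they show the induction has not actually been set up consistently; and none of this touches the genuine obstacle in the $c\le 1$ case.
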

\section{Acknowledgements}
Gy\H{o}ri's research was partially supported by the National Research, Development, and Innovation Office NKFIH, grants  K132696, SNN135643 and K126853. 

\printbibliography
\newpage
\appendix
\section{Proof of Lemma \ref{S3,4_7,7}}
\label{appendix}

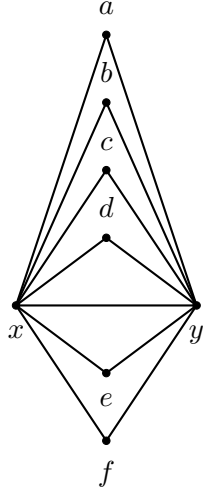
\begin{figure}[ht]
\centering
    \begin{tikzpicture}[scale=.1]
            \coordinate (x) at (-12,0);
            \coordinate (y) at (12,0);
            \coordinate (a) at (0,36);
            \coordinate (b) at (0,27);
            \coordinate (c) at (0,18);
            \coordinate (d) at (0,9);
            \coordinate (e) at (0,-9);
            \coordinate (f) at (0,-18);
            \draw[thick] (x) -- (a) -- (y) -- (x);
            \draw[thick] (x) -- (b) -- (y);
            \draw[thick] (x) -- (c) -- (y);
            \draw[thick] (x) -- (d) -- (y);
            \draw[thick] (x) -- (e) -- (y);
            \draw[thick] (x) -- (f) -- (y);
            \draw[fill=black] (x) circle(\circ)  node[label=below:$x$] {};
            \draw[fill=black] (y) circle(\circ)  node[label=below:$y$] {};
            \draw[fill=black] (a) circle(\circ)  node[label=above:$a$] {};
            \draw[fill=black] (b) circle(\circ)  node[label=above:$b$] {};
            \draw[fill=black] (c) circle(\circ)  node[label=above:$c$] {};
            \draw[fill=black] (d) circle(\circ)  node[label=above:$d$] {};
            \draw[fill=black] (e) circle(\circ)  node[label=below:$e$] {};
            \draw[fill=black] (f) circle(\circ)  node[label=below:$f$] {};
    \end{tikzpicture}
\caption{$G$ has a $7-7$ edge $xy$.}
\label{S3,4_7case}
\end{figure}

\begin{lemma}\label{S3,4_7,7_app}
If there is a $7$-$7$ edge in $G$, then $e(G)\leq \frac{20}{7}n$.
\end{lemma}
\begin{proof}
Let $xy\in E(G)$ be a $7$-$7$ edge.  Since $G$ is an $S_{3,4}$-free plane graph, $xy$ must be contained in $6$ triangles.  Let $a,b,c,d,e$ and $f$ be the vertices in $G$ which are adjacent to both $x$ and $y$, see Figure \ref{S3,4_7case}(i).  Let $S_1=\{a,b,c,d,e,f\}$ and $H=\{x,y,a,b,c,d,e,f\}$.   Delete the vertices in $H$.  Assume $a$ has two neighbors in $V(G)\backslash H$.  We immediately get an $S_{3,4}$ with $xa$ or $ya$ as the backbone.  Thus, any vertex in the set $S_1$ can have at most $1$ neighbor in $V(G)\backslash H$.  If there are no edges between the vertices in $S_1$, we deleted at most $13 + 6= 19$ edges. Assume that there is an edge between the vertices in $S_1$, say $ab$. If $a$ (or $b$) has a neighbor in $V(G)\backslash H$, then $xa$ (or $xb$) is the backbone of an $S_{3,3}$. Similarly, for the other edges in $S_1$, both the vertices cannot have a neighbor in $V(G)\backslash H$. Thus, if there is an edge joining any two vertices in $S_1$, the number of edges deleted is at most $13 + 5= 18$.  By the induction hypothesis, we get $e(G-H)\leq \frac{20}{7}(n-8)$. Hence, $e(G)=e(G-H)+19\leq\frac{20}{7}(n-8)+19\leq\frac{20}{7}n$.
\end{proof}
\begin{figure}[ht]
\centering
    \begin{tikzpicture}[scale=.1]
            \coordinate (x) at (-15,0);
            \coordinate (y) at (15,0);
            \coordinate (a) at (0,36);
            \coordinate (b) at (0,27);
            \coordinate (c) at (0,18);
            \coordinate (d) at (0,9);
            \coordinate (e) at (0,-9);
            \coordinate (f) at (24,0);
            \draw[thick] (x) -- (a) -- (y) -- (x);
            \draw[thick] (x) -- (b) -- (y);
            \draw[thick] (x) -- (c) -- (y);
            \draw[thick] (x) -- (d) -- (y);
            \draw[thick] (x) -- (e) -- (y);
            \draw[thick] (f) -- (y);
            \draw[fill=black] (x) circle(\circ)  node[label=below:$x$] {};
            \draw[fill=black] (y) circle(\circ)  node[label=below:$y$] {};
            \draw[fill=black] (a) circle(\circ)  node[label=below:$a$] {};
            \draw[fill=black] (b) circle(\circ)  node[label=below:$b$] {};
            \draw[fill=black] (c) circle(\circ)  node[label=below:$c$] {};
            \draw[fill=black] (d) circle(\circ)  node[label=below:$d$] {};
            \draw[fill=black] (e) circle(\circ)  node[label=above:$e$] {};
            \draw[fill=black] (f) circle(\circ)  node[label=below:$f$] {};
            \node at (0,-21){(i)};
    \end{tikzpicture}\qquad
    \begin{tikzpicture}[scale=.1]
            \coordinate (x) at (-15,0);
            \coordinate (y) at (15,0);
            \coordinate (a) at (0,36);
            \coordinate (b) at (0,27);
            \coordinate (c) at (0,18);
            \coordinate (d) at (0,9);
            \coordinate (e) at (0,-9);
            \coordinate (f) at (24,0);
            \draw[thick] (x) -- (a) -- (y) -- (x);
            \draw[thick] (x) -- (b) -- (y);
            \draw[thick] (x) -- (c) -- (y);
            \draw[thick] (x) -- (d) -- (y);
            \draw[thick] (x) -- (e) -- (y);
            \draw[thick] (f) -- (y);
            \draw[thick] (f) -- (a);
            \draw[fill=black] (x) circle(\circ)  node[label=below:$x$] {};
            \draw[fill=black] (y) circle(\circ)  node[label=below:$y$] {};
            \draw[fill=black] (a) circle(\circ)  node[label=below:$a$] {};
            \draw[fill=black] (b) circle(\circ)  node[label=below:$b$] {};
            \draw[fill=black] (c) circle(\circ)  node[label=below:$c$] {};
            \draw[fill=black] (d) circle(\circ)  node[label=below:$d$] {};
            \draw[fill=black] (e) circle(\circ)  node[label=above:$e$] {};
            \draw[fill=black] (f) circle(\circ)  node[label=below:$f$] {};
            \node at (0,-21){(ii)};
    \end{tikzpicture}\qquad
    \begin{tikzpicture}[scale=.1]
            \coordinate (x) at (-15,0);
            \coordinate (y) at (15,0);
            \coordinate (a) at (0,36);
            \coordinate (b) at (0,27);
            \coordinate (c) at (0,18);
            \coordinate (d) at (0,9);
            \coordinate (e) at (0,-9);
            \coordinate (f) at (24,0);
            \draw[thick] (x) -- (a) -- (y) -- (x);
            \draw[thick] (x) -- (b) -- (y);
            \draw[thick] (x) -- (c) -- (y);
            \draw[thick] (x) -- (d) -- (y);
            \draw[thick] (x) -- (e) -- (y);
            \draw[thick] (f) -- (y);
            \draw[thick] (f) -- (a);
            \draw[thick] (f) -- (e);
            \draw[fill=black] (x) circle(\circ)  node[label=below:$x$] {};
            \draw[fill=black] (y) circle(\circ)  node[label=above:$y$] {};
            \draw[fill=black] (a) circle(\circ)  node[label=below:$a$] {};
            \draw[fill=black] (b) circle(\circ)  node[label=below:$b$] {};
            \draw[fill=black] (c) circle(\circ)  node[label=below:$c$] {};
            \draw[fill=black] (d) circle(\circ)  node[label=below:$d$] {};
            \draw[fill=black] (e) circle(\circ)  node[label=above:$e$] {};
            \draw[fill=black] (f) circle(\circ)  node[label=below:$f$] {};
            \node at (0,-21){(iii)};
    \end{tikzpicture}
\captionsetup{singlelinecheck=off}
\caption[foo bar2]{$G$ has a $6-7$ edge $xy$.
\begin{enumerate}[label=(\roman*)]
    \item The vertex $f$ has no neighbors in $S_1$.
    \item The vertex $f$ has one neighbor in $S_1$.
    \item The vertex $f$ has two neighbors in $S_1$.
\end{enumerate}}
\label{7-6case}
\end{figure}
\begin{lemma}\label{S3,4_6,7}
If there is a $6$-$7$ edge in $G$, then $e(G)\leq \frac{20}{7}n$.
\end{lemma}
\begin{proof}
Let $xy\in E(G)$ be a $6$-$7$ edge.  Since $G$ is an $S_{3,4}$-free plane graph, $xy$ must be contained in $5$ triangles. Let $a,b,c,d$ and $e$ be the vertices in $G$ which are adjacent to both $x$ and $y$. Let $f$ be the vertex adjacent to $y$ but not to $x$, see Figure \ref{7-6case}. Let $S_1=\{a,b,c,d,e\}$ and $H=\{x,y,a,b,c,d,e,f\}$.  Delete the vertices in $H$.   Any vertex in $S_1$ can have at most $1$ neighbor in $V(G)\backslash H$.  If there is an edge joining any two vertices in $S_1$, say $ab$.  Similarly, as before, the vertices $a$ and $b$ cannot have a neighbor in $V(G)\backslash H$. We further distinguish the cases based on the number of edges from $f$ as follows:

\begin{enumerate}

\item \textbf{The vertex $f$ has no neighbors in $S_1$}, see Figure \ref{7-6case}(i). Clearly, the vertex $f$ can have at most $2$ neighbors in $V(G)\backslash H$. If there are no edges between the vertices in $S_1$, we deleted at most $12+2+5=19$ edges.  If there is an edge joining any two vertices in $S_1$, the number of edges deleted is at most $12+2+4=18$.

\item \textbf{The vertex $f$ has one neighbor in $S_1$.} Without loss of generality, suppose $a$ is the neighbor of $f$,  see Figure \ref{7-6case}(ii).  The vertex $f$ can have at most one neighbor in $V(G)\backslash H$.  If the vertex $a$ has a neighbor in $V(G)\backslash H$, we have an $S_{3,4}$ with $xa$ as the backbone.  If there are no edges between the vertices in $S_1$, we have deleted at most  $13+4+1=18$ edges.  If there is an edge joining any two vertices in $S_1$, the number of edges deleted is at most  $13+4+1=18$.

\item \textbf{The vertex $f$ has two neighbors in $S_1$.}  Without loss of generality, suppose $f$ is the neighbor of $a$ and $e$, see Figure \ref{7-6case}(iii). The vertex $f$ cannot have a neighbor in $V(G)\backslash H$, otherwise we have an $S_{3,4}$ with $yf$ as the backbone.  If either $a$ or $e$ has a neighbor in $V(G)\backslash H$, we have an $S_{3,4}$ with $xa$ or $xe$ as the backbone, respectively. Suppose there is no edge between the vertices in $S_1$.  The total number of edges deleted is at most  $14+3=17$. Suppose $a$ and $b$ are adjacent, then $b$ cannot have a neighbor in $V(G)\backslash H$.  Similarly, for the other edges in $S_1$ except $ad$, which is still possible without any extra restrictions.  If there is an edge joining any two vertices in $S_1$, the total number of edges deleted is at most  $14+4=18$. 

\end{enumerate}
Thus, by induction $$e(G)=e(G-H)+19\leq \frac{20}{7}(n-8)+19< \frac{20}{7}n,$$ and we are done.
\end{proof}

\begin{figure}
\centering
\begin{tikzpicture}[scale=0.1]
            \coordinate (x) at (-12,0);
            \coordinate (y) at (12,0);
            \coordinate (a) at (0,27);
            \coordinate (b) at (0,18);
            \coordinate (c) at (0,9);
            \coordinate (d) at (0,-9);
            \coordinate (e) at (21,6); 
            \coordinate (f) at (21,0); 
            \draw[thick] (x) -- (a) -- (y) -- (x);
            \draw[thick] (x) -- (b) -- (y);
            \draw[thick] (x) -- (c) -- (y);
            \draw[thick] (x) -- (d) -- (y);
            \draw[thick] (e) -- (y);
            \draw[thick] (f) -- (y);
            \draw[fill=black] (x) circle(\circ)  node[label=below:$x$] {};
            \draw[fill=black] (y) circle(\circ)  node[label=below:$y$] {};
            \draw[fill=black] (a) circle(\circ)  node[label=below:$a$] {};
            \draw[fill=black] (b) circle(\circ)  node[label=below:$b$] {};
            \draw[fill=black] (c) circle(\circ)  node[label=below:$c$] {};
            \draw[fill=black] (d) circle(\circ)  node[label=above:$d$] {};
            \draw[fill=black] (e) circle(\circ)  node[label=below:$e$] {};
            \draw[fill=black] (f) circle(\circ)  node[label=below:$f$] {};
            \node at (0,-18){(i)};
    \end{tikzpicture}\qquad
\begin{tikzpicture}[scale=0.1]
            \coordinate (x) at (-12,0);
            \coordinate (y) at (12,0);
            \coordinate (a) at (0,27);
            \coordinate (b) at (0,18);
            \coordinate (c) at (0,9);
            \coordinate (d) at (0,-9);
            \coordinate (e) at (21,6); 
            \coordinate (f) at (21,0); 
            \draw[thick] (x) -- (a) -- (y) -- (x);
            \draw[thick] (x) -- (b) -- (y);
            \draw[thick] (x) -- (c) -- (y);
            \draw[thick] (x) -- (d) -- (y);
            \draw[thick] (e) -- (y);
            \draw[thick] (e) -- (y);
            \draw[thick] (e) -- (a);
            \draw[thick] (f) -- (y);
            \draw[fill=black] (x) circle(\circ)  node[label=below:$x$] {};
            \draw[fill=black] (y) circle(\circ)  node[label=below:$y$] {};
            \draw[fill=black] (a) circle(\circ)  node[label=below:$a$] {};
            \draw[fill=black] (b) circle(\circ)  node[label=below:$b$] {};
            \draw[fill=black] (c) circle(\circ)  node[label=below:$c$] {};
            \draw[fill=black] (d) circle(\circ)  node[label=above:$d$] {};
            \draw[fill=black] (e) circle(\circ)  node[label=right:$e$] {};
            \draw[fill=black] (f) circle(\circ)  node[label=below:$f$] {};
            \node at (0,-18){(ii)};
    \end{tikzpicture}
\qquad
\begin{tikzpicture}[scale=0.1]
           \coordinate (x) at (-12,0);
            \coordinate (y) at (12,0);
            \coordinate (a) at (0,27);
            \coordinate (b) at (0,18);
            \coordinate (c) at (0,9);
            \coordinate (d) at (0,-9);
            \coordinate (e) at (21,6); 
            \coordinate (f) at (21,0);  
            \draw[thick] (x) -- (a) -- (y) -- (x);
            \draw[thick] (x) -- (b) -- (y);
            \draw[thick] (x) -- (c) -- (y);
            \draw[thick] (x) -- (d) -- (y);
            \draw[thick] (e) -- (y);
            \draw[thick] (e) -- (a);
            \draw[thick] (f) -- (y);
            \draw[thick](f)..controls (27,12) and (27,18) ..(a);
            \draw[fill=black] (x) circle(\circ)  node[label=below:$x$] {};
            \draw[fill=black] (y) circle(\circ)  node[label=below:$y$] {};
            \draw[fill=black] (a) circle(\circ)  node[label=below:$a$] {};
            \draw[fill=black] (b) circle(\circ)  node[label=below:$b$] {};
            \draw[fill=black] (c) circle(\circ)  node[label=below:$c$] {};
            \draw[fill=black] (d) circle(\circ)  node[label=above:$d$] {};
            \draw[fill=black] (e) circle(\circ)  node[label=left:$e$] {};
            \draw[fill=black] (f) circle(\circ)  node[label=below:$f$] {};
            \node at (0,-18){(iii)};
    \end{tikzpicture}
\qquad
\begin{tikzpicture}[scale=0.1]
            \coordinate (x) at (-12,0);
            \coordinate (y) at (12,0);
            \coordinate (a) at (0,27);
            \coordinate (b) at (0,18);
            \coordinate (c) at (0,9);
            \coordinate (d) at (0,-9);
            \coordinate (e) at (21,6); 
            \coordinate (f) at (21,0);  
            \draw[thick] (x) -- (a) -- (y) -- (x);
            \draw[thick] (x) -- (b) -- (y);
            \draw[thick] (x) -- (c) -- (y);
            \draw[thick] (x) -- (d) -- (y);
            \draw[thick] (e) -- (y);
            \draw[thick] (e) -- (a);
            \draw[thick] (f) -- (y);
            \draw[thick](f)--(d);
            \draw[fill=black] (x) circle(\circ)  node[label=above:$x$] {};
            \draw[fill=black] (y) circle(\circ)  node[label=above:$y$] {};
            \draw[fill=black] (a) circle(\circ)  node[label=below:$a$] {};
            \draw[fill=black] (b) circle(\circ)  node[label=below:$b$] {};
            \draw[fill=black] (c) circle(\circ)  node[label=below:$c$] {};;
            \draw[fill=black] (d) circle(\circ)  node[label=above:$d$] {};
            \draw[fill=black] (e) circle(\circ)  node[label=below:$e$] {};
            \draw[fill=black] (f) circle(\circ)  node[label=below:$f$] {};
            \node at (0,-18){(iv)};
    \end{tikzpicture}
\qquad
\begin{tikzpicture}[scale=0.1]
            \coordinate (x) at (-12,0);
            \coordinate (y) at (12,0);
            \coordinate (a) at (0,27);
            \coordinate (b) at (0,18);
            \coordinate (c) at (0,9);
            \coordinate (d) at (0,-9);
            \coordinate (e) at (21,6); 
            \coordinate (f) at (21,0); 
            \draw[thick] (x) -- (a) -- (y) -- (x);
            \draw[thick] (x) -- (b) -- (y);
            \draw[thick] (x) -- (c) -- (y);
            \draw[thick] (x) -- (d) -- (y);
            \draw[thick] (e) -- (y);
            \draw[thick] (e) -- (a);
            \draw[thick] (f) -- (y);
            \draw[thick](e)..controls (27,0) and (27,-12) ..(d);
            \draw[fill=black] (x) circle(\circ)  node[label=above:$x$] {};
            \draw[fill=black] (y) circle(\circ)  node[label=above:$y$] {};
            \draw[fill=black] (a) circle(\circ)  node[label=below:$a$] {};
            \draw[fill=black] (b) circle(\circ)  node[label=below:$b$] {};
            \draw[fill=black] (c) circle(\circ)  node[label=below:$c$] {};
            \draw[fill=black] (d) circle(\circ)  node[label=above:$d$] {};
            \draw[fill=black] (e) circle(\circ)  node[label=below:$e$] {};
            \draw[fill=black] (f) circle(\circ)  node[label=below:$f$] {};
            \node at (0,-18){(v)};
    \end{tikzpicture}
\qquad
\begin{tikzpicture}[scale=0.1]
            \coordinate (x) at (-12,0);
            \coordinate (y) at (12,0);
            \coordinate (a) at (0,27);
            \coordinate (b) at (0,18);
            \coordinate (c) at (0,9);
            \coordinate (d) at (0,-9);
            \coordinate (e) at (21,6); 
            \coordinate (f) at (21,0); 
            \draw[thick] (x) -- (a) -- (y) -- (x);
            \draw[thick] (x) -- (b) -- (y);
            \draw[thick] (x) -- (c) -- (y);
            \draw[thick] (x) -- (d) -- (y);
            \draw[thick] (e) -- (y);
            \draw[thick] (e) -- (a);
            \draw[thick] (f) -- (y);
            \draw[thick] (f) -- (d);
            \draw[thick](e)..controls (27,0) and (27,-12) ..(d);
            \draw[fill=black] (x) circle(\circ)  node[label=above:$x$] {};
            \draw[fill=black] (y) circle(\circ)  node[label=above:$y$] {};
            \draw[fill=black] (a) circle(\circ)  node[label=below:$a$] {};
            \draw[fill=black] (b) circle(\circ)  node[label=below:$b$] {};
            \draw[fill=black] (c) circle(\circ)  node[label=below:$c$] {};
            \draw[fill=black] (d) circle(\circ)  node[label=above:$d$] {};
            \draw[fill=black] (e) circle(\circ)  node[label=below:$e$] {};
            \draw[fill=black] (f) circle(\circ)  node[label=below:$f$] {};
            \node at (0,-18){(vi)};
    \end{tikzpicture}
\qquad
\begin{tikzpicture}[scale=0.1]
            \coordinate (x) at (-12,0);
            \coordinate (y) at (12,0);
            \coordinate (a) at (0,18);
            \coordinate (b) at (0,9);
            \coordinate (c) at (0,-3);
            \coordinate (d) at (0,-36);
            \coordinate (e) at (21,6); 
            \coordinate (f) at (3,-6); 
            \draw[thick] (x) -- (a) -- (y) -- (x);
            \draw[thick] (x) -- (b) -- (y);
            \draw[thick] (x) -- (c) -- (y);
            \draw[thick] (x) -- (d) -- (y);
            \draw[thick] (e) -- (y);
            \draw[thick] (e) -- (a);
            \draw[thick] (f) -- (y);
            \draw[thick] (f) -- (c);
            \draw[thick](e)..controls (27,0) and (27,-12) ..(d);
            \draw[fill=black] (x) circle(\circ)  node[label=above:$x$] {};
            \draw[fill=black] (y) circle(\circ)  node[label=above:$y$] {};
            \draw[fill=black] (a) circle(\circ)  node[label=below:$a$] {};
            \draw[fill=black] (b) circle(\circ)  node[label=below:$b$] {};
            \draw[fill=black] (c) circle(\circ)  node[label=below:$c$] {};
            \draw[fill=black] (d) circle(\circ)  node[label=below:$d$] {};
            \draw[fill=black] (e) circle(\circ)  node[label=right:$e$] {};
            \draw[fill=black] (f) circle(\circ)  node[label=right:$f$] {};
            \node at (0,-45){(vii)};
    \end{tikzpicture}\qquad
\begin{tikzpicture}[scale=0.1]
            \coordinate (x) at (-12,0);
            \coordinate (y) at (12,0);
            \coordinate (a) at (0,18);
            \coordinate (b) at (0,9);
            \coordinate (c) at (0,-3);
            \coordinate (d) at (0,-36);
            \coordinate (e) at (21,6); 
            \coordinate (f) at (3,-6); 
            \draw[thick] (x) -- (a) -- (y) -- (x);
            \draw[thick] (x) -- (b) -- (y);
            \draw[thick] (x) -- (c) -- (y);
            \draw[thick] (x) -- (d) -- (y);
            \draw[thick] (e) -- (y);
            \draw[thick] (e) -- (a);
            \draw[thick] (f) -- (y);
            \draw[thick] (f) -- (c);
            \draw[thick] (f) -- (d);
            \draw[thick](e)..controls (27,0) and (27,-12) ..(d);
            \draw[fill=black] (x) circle(\circ)  node[label=above:$x$] {};
            \draw[fill=black] (y) circle(\circ)  node[label=above:$y$] {};
            \draw[fill=black] (a) circle(\circ)  node[label=below:$a$] {};
            \draw[fill=black] (b) circle(\circ)  node[label=below:$b$] {};
            \draw[fill=black] (c) circle(\circ)  node[label=below:$c$] {};
            \draw[fill=black] (d) circle(\circ)  node[label=below:$d$] {};
            \draw[fill=black] (e) circle(\circ)  node[label=right:$e$] {};
            \draw[fill=black] (f) circle(\circ)  node[label=right:$f$] {};
            \node at (0,-45){(viii)};
    \end{tikzpicture}\qquad
    \begin{tikzpicture}[scale=0.1]
            \coordinate (x) at (-12,0);
            \coordinate (y) at (12,0);
            \coordinate (a) at (0,9);
            \coordinate (b) at (0,-3);
            \coordinate (c) at (0,-27);
            \coordinate (d) at (0,-36);
            \coordinate (e) at (21,6); 
            \coordinate (f) at (3,-6); 
            \draw[thick] (x) -- (a) -- (y) -- (x);
            \draw[thick] (x) -- (b) -- (y);
            \draw[thick] (x) -- (c) -- (y);
            \draw[thick] (x) -- (d) -- (y);
            \draw[thick] (e) -- (y);
            \draw[thick] (e) -- (a);
            \draw[thick] (f) -- (y);
            \draw[thick] (f) -- (c);
            \draw[thick] (f) -- (b);
            \draw[thick](e)..controls (27,0) and (27,-12) ..(d);
            \draw[fill=black] (x) circle(\circ)  node[label=above:$x$] {};
            \draw[fill=black] (y) circle(\circ)  node[label=above:$y$] {};
            \draw[fill=black] (a) circle(\circ)  node[label=below:$a$] {};
            \draw[fill=black] (b) circle(\circ)  node[label=below:$b$] {};
            \draw[fill=black] (c) circle(\circ)  node[label=below:$c$] {};
            \draw[fill=black] (d) circle(\circ)  node[label=below:$d$] {};
            \draw[fill=black] (e) circle(\circ)  node[label=right:$e$] {};
            \draw[fill=black] (f) circle(\circ)  node[label=right:$f$] {};
            \node at (0,-45){(ix)};
    \end{tikzpicture}
\end{figure}
\begin{figure}
\centering
\captionsetup{singlelinecheck=off}
\caption[foo bar]{$G$ has a $5-7$ edge $xy$.
\begin{enumerate}[label=(\roman*)]
    \item The vertices $e$ and $f$ have no neighbors in $S_1$.
    \item The vertex $e$ has one neighbor in $S_1$, and $f$ has none.
    \item Both the vertices $e$ and $f$ have one common neighbor in $S_1$.
    \item The vertices $e$ and $f$ have one neighbor in $S_1$ and they are distinct.
    \item One of the vertices $e$ or $f$ has two neighbors in $S_1$.
    \item Suppose $e$ is neighbor of $a$ and $d$, while $f$ is neighbor of $d$.
    \item Suppose $e$ is neighbor of $a$ and $d$, while $f$ is neighbor of $c$.
    \item Suppose $e$ is neighbor of $a$ and $d$, while $f$ is neighbor of $c$ and $d$.
    \item Suppose $e$ is neighbor of $a$ and $d$, while $f$ is neighbor of $b$ and $c$.
\end{enumerate}}
\ContinuedFloat
\label{7-5}
\end{figure}

\begin{lemma}\label{S3,4_5,7}
If there is a $5$-$7$ edge in $G$, then $e(G)\leq \frac{20}{7}n$.
\end{lemma}
\begin{proof}
Let $xy$ be a $5-7$ edge in $G$.  Since $G$ is an $S_{3,4}$-free plane graph, $xy$ must be contained in $4$ triangles.  Let $a,b,c$ and $d$ be the vertices in $G$ which are adjacent to both $x$ and $y$.  Let $e$ and $f$ be the vertices adjacent to $y$ but not to $x$, see Figure \ref{7-5}.   Let $S_1=\{a,b,c,d\}$ and $H=\{x,y,a,b,c,d,e,f\}$.  Delete the vertices in $H$.  

The vertices $e$ and $f$ can have at most two neighbors in $V(G)\backslash H$ each.  The vertices in $S_1$ can have at most one neighbor in $V(G)\backslash H$ each.  If there is an edge joining any two vertices in $S_1$, say $ab$.  Similarly, as before, the vertices $a$ and $b$ cannot have a neighbor in $V(G)\backslash H$.   We distinguish the cases based on the neighbors of $e$ and $f$ as follows:
\begin{enumerate}
         
         \item \textbf{The vertices $e$ and $f$ have no neighbors in $S_1$}, see Figure \ref{7-5}(i)).
         If the vertices $e$ and $f$ are adjacent, they can have at most one neighbor in $V(G)\backslash H$ each.  Otherwise, we have an $S_{3,4}$ with $ey$ (or $fy$) as the backbone.  If there are no edges between the vertices in $S_1$, the number of edges deleted is at most $11+4+4=19$.  If there is an edge joining any two vertices in $S_1$, the number of edges deleted is at most $11+3+4=18$.

         \item \textbf{One of the vertices $e$ or $f$ has one neighbor in $S_1$, while the other has none.}
         Without loss of generality, suppose $e$ and $a$ are adjacent (see Figure \ref{7-5}(ii)).  The vertex $a$ cannot have a neighbor in $V(G)\backslash H$, and $e$ can have at most one neighbor in $V(G)\backslash H$.  If the vertices $e$ and $f$ are adjacent, then $e$ cannot have a neighbor in $V(G)\backslash H$ and $f$ can have at most one neighbor in $V(G)\backslash H$.  If there are no edges between the vertices in $S_1$, the number of edges deleted is at most $12+3+3=18$.  If there is an edge joining any two vertices in $S_1$, the number of edges deleted is at most $12+3+3=18$.

         \item \textbf{Both the vertices $e$ and $f$ have one neighbor in $S_1$.}
          There are two possibilities. In the first case, without loss of generality, suppose $a$ is the common neighbor of $e$ and $f$ (see Figure \ref{7-5}(iii)).  Similarly, as before, the vertex $a$ cannot have a neighbor in $V(G)\backslash H$, and $e$ and $f$ can have at most one neighbor in $V(G)\backslash H$ each.  If the vertices $e$ and $f$ are adjacent, $e$ and $f$ have no neighbors in $V(G)\backslash H$.  If there are no edges between the vertices in $S_1$, the number of edges deleted is at most $13+3+2=18$.  If there is an edge joining any two vertices in $S_1$, the number of edges deleted is at most $13+3+2=18$.  
          
          Without loss of generality, suppose $a$ is the neighbor of $e$ and $d$ is the neighbor of $f$ (see Figure \ref{7-5}(iv)). Similarly, as before, the vertices $a$ and $d$ cannot have a neighbor in $V(G)\backslash H$, and $e$ and $f$ can have at most one neighbor in $V(G)\backslash H$ each.  If the vertices $e$ and $f$ are adjacent, then they have no neighbor in $V(G)\backslash H$.  If there are no edges between the vertices in $S_1$, the number of edges deleted is at most $13+2+2=17$.   If $a$ and $b$ are adjacent, then $b$ does not have a neighbor in $V(G)\backslash H$.  Similarly, for the other edges in $S_1$, except $ad$.  The vertices $a$ and $d$ can be adjacent without any constraints.  Thus, the number of edges deleted is at most $13+3+2=18$.

         \item \textbf{One of the vertices $e$ or $f$ has two neighbors in $S_1$, while the other has none.}  Without loss of generality, suppose $e$ is the neighbor of $a$ and $d$, while $f$ has no neighbors in $S_1$ (see Figure \ref{7-5}(v)).  The vertex $e$ cannot have a neighbor in $V(G)\backslash H$.  Similarly, as before, the vertices $a$ and $d$ cannot have a neighbor in $V(G)\backslash H$, whereas $f$ can have at most two neighbors in $V(G)\backslash H$.  If $e$ and $f$ are adjacent, then $f$ can have at most one neighbor in $V(G)\backslash H$.  If there are no edges between the vertices in $S_1$, the number of edges deleted is at most $13+2+2=17$.  If $a$ and $b$ are adjacent, then $b$ does not have a neighbor in $V(G)\backslash H$.  Similarly, for the other edges in $S_1$, except $ad$.  The vertices $a$ and $d$ can be adjacent without any constraints.  Hence, the number of edges deleted is at most $13+3+2=18$.

         \item \textbf{One of the vertices $e$ or $f$ has two neighbors in $S_1$, while the other has one.}
         There are two possibilities. In the first case, without loss of generality, suppose $e$ is the neighbor of $a$ and $d$, while $f$ is the neighbor of $d$ (see Figure \ref{7-5}(vi)).  Similarly, as before, the vertices $a,d$ and $e$ cannot have a neighbor in $V(G)\backslash H$, whereas $f$ can have at most one neighbor in $V(G)\backslash H$.  If $e$ and $f$ are adjacent, then $f$ cannot have a neighbor in $V(G)\backslash H$.  If there are no edges between the vertices in $S_1$, the number of edges deleted is at most $14+2+1=17$.  In the other case, the vertices $a$ and $d$ can be adjacent without any constraints.  Hence, the  number of edges deleted is at most $14+3+1=18$. 
         
         Without loss of generality, suppose $e$ is the neighbor of $a$ and $d$, while $f$ is the neighbor of $c$ (see Figure \ref{7-5}(vii)).  Similarly, as before, the vertices $a,c, d$ and $e$ cannot have a neighbor in $V(G)\backslash H$, whereas $f$ can have at most one neighbor in $V(G)\backslash H$.  If there are no edges between the vertices in $S_1$, the  number of edges deleted is at most $14+1+1=16$.  In the other case, the vertices $a,c$ and $d$ can be adjacent without any constraints.  Hence, the  number of edges deleted is at most $14+3+1=18$.

        \item \textbf{Both the vertices $e$ and $f$ have two neighbors in $S_1$.}
         There are two possibilities. In the first case, without loss of generality, suppose $e$ is the neighbor of $a$ and $d$, while $f$ is the neighbor of $c$ and $d$ (see Figure \ref{7-5}(viii)).  Similarly, as before, the vertices $a, c, d, e$ and $f$ cannot have a neighbor in $V(G)\backslash H$.  If there are no edges between the vertices in $S_1$, the  number of edges deleted is at most $15+1=16$.   In the other case, the vertices $a,c$ and $d$ can be adjacent without any constraints.  Thus, the number of edges deleted is $15+3=18$.  
         
         Without loss of generality, suppose $e$ is the neighbor of $a$ and $d$, while $f$ is the neighbor of $b$ and $c$ (see Figure \ref{7-5}(ix)).  Similarly, as before, the vertices $a, b, c, d, e$ and $f$ cannot have a neighbor in $V(G)\backslash H$.  The vertices $a,b,c$ and $d$ can be a path of length $3$ without any constraints.  In this case, the  number of edges deleted is $15+3=18$.
\end{enumerate}
Thus, by induction $$e(G)=e(G-H)+19\leq \frac{20}{7}(n-8)+19< \frac{20}{7}n,$$ and we are done.
\end{proof}

\begin{lemma}\label{S3,4_6,6}
If there is a $6$-$6$ edge in $G$, then $e(G)\leq \frac{20}{7}n$.
\end{lemma}

\begin{proof}
Let $xy\in E(G)$ be a $6$-$6$ edge. There are at least $4$ triangles sitting on the edge $xy$, otherwise $G$ contains an $S_{3,4}$.  We subdivide the cases based on the number of triangles sitting on the edge $xy$.
 \begin{enumerate}
     \item \textbf{There are $5$ triangles sitting on the edge $xy$.}   Let $a,b,c,d$ and $e$ be the vertices in $G$ which are adjacent to both $x$ and $y$, see Figure \ref{fig_s_3,4_6,6,5triangles}.  Let $S_1=\{a,b,c,d,e\}$, and $H=S_1\cup \{x,y\}$.  Delete the vertices in $H$.  The vertices in $S_1$ can have at most one neighbor in $V(G)\backslash H$ each and can form a path of length $4$ within $S_1$.  Hence, the number of edges deleted is $11+5+4=20$.  By the induction hypothesis, we get $e(G-H)\leq \frac{20}{7}(n-7)$. Hence, $e(G)=e(G-H)+20\leq \frac{20}{7}(n-7)+20\leq\frac{20}{7}n$.
     
      \item \textbf{There are $4$ triangles sitting on the edge $xy$.}  Let $a,b,c$ and $d$ be the vertices in $G$ which are adjacent to both $x$ and $y$.  Let $e$ be the vertex adjacent to $x$ but not adjacent to $y$, and $f$ be adjacent to $y$ but not adjacent to $x$.   Let $S_1=\{a,b,c,d\}$ and $H=\{x,y\}\cup S_1\cup\{e,f\}$.  Delete the vertices in $H$.  The vertices $e$ and $f$ can have at most two neighbors in $V(G)\backslash H$ each.  We distinguish the cases based on the neighbors of $e$ and $f$ as follows:
     \begin{enumerate}
         \item \textbf{The vertices $e$ and $f$ have no neighbors in $S_1$}, see Figure \ref{fig_s_3,4_6,6}(i)).
         The vertices in $S_1$ can have at most one neighbor in $V(G)\backslash H$ each and can form a path of length $3$ in $S_1$.  If the vertices $e$ and $f$ are adjacent, then both can have at most one neighbor in $V(G)\backslash H$.  Otherwise, we have an $S_{3,4}$ with $ex$ (or $fy$) as the backbone.  Thus, the number of edges deleted is at most $11+3+4+4=22$.  
         
         \item \textbf{One of the vertices $e$ or $f$ has one neighbor in $S_1$, while the other has none.} 
         Without loss of generality, assume that $e$ and $a$ are adjacent, see Figure \ref{fig_s_3,4_6,6}(ii).  The vertex $a$ cannot have a neighbor in $V(G)\backslash H$, and $e$ can have at most one neighbor in $V(G)\backslash H$.  Otherwise, if $a$ has a neighbor in $V(G)\backslash H$, $ya$ is the backbone of an $S_{3,4}$.  If the vertices $e$ and $f$ are adjacent, then the vertex $e$ cannot have a neighbor in $V(G)\backslash H$, and $f$ can have at most one neighbor in $V(G)\backslash H$.  Otherwise, we have an $S_{3,4}$ with $ex$ (or $fy$) as the backbone.   Similarly, as before, the vertices $b,c$ and $d$ can have at most one neighbor in $V(G)\backslash H$ each and the vertices $\{a,b,c,d\}$ can form a path of length $3$ in $S_1$.  Thus, the  number of edges deleted is at most $12+3+3+3=21$.  
         
         \item \textbf{The vertices $e$ and $f$ have one neighbor in $S_1$.}
         There are two possibilities. In the first case, without loss of generality, suppose $a$ is the common neighbor of $e$ and $f$, see Figure \ref{fig_s_3,4_6,6}(iii).  Similarly, as before, $a$ cannot have a neighbor in $V(G)\backslash H$, and $e$ and $f$ can have at most one neighbor in $V(G)\backslash H$ each.  The vertices $b,c$ and $d$ can  have  at most one neighbor in $V(G)\backslash H$ each,  and  the  vertices $\{a,b,c,d\}$ can form  a path of length $3$ in $S_1$.  If the vertices $e$ and $f$ are adjacent, then they cannot have a neighbor in $V(G)\backslash H$.  Thus, the number of edges deleted is at most $13+3+3+2=21$. 
         
         Without loss of generality, suppose $a$ is the neighbor of $e$, and $d$ is the neighbor of $f$, see Figure \ref{fig_s_3,4_6,6}(iv). Similarly, as before, the vertices $a$ and $d$ cannot have a neighbor in $V(G)\backslash H$, and $e$ and $f$ can have at most one neighbor in $V(G)\backslash H$ each.  The  vertices $b$ and $c$ can  have  at most one neighbor in $V(G)\backslash H$ each, and  the  vertices $\{a,b,c,d\}$ can form a path of length $3$ in $S_1$.  If the vertices $e$ and $f$ are adjacent, then they cannot have a neighbor in $V(G)\backslash H$.  Thus, the  number of edges deleted is at most $13+3+2+2=20$. (In fact, it can be shown that if the vertices $e$ and $f$ are adjacent, there can only be a $2$-path inside $S_1$. However, this precision is unnecessary. We skip this in the following cases also.)
         
        \item \textbf{One of the vertices $e$ or $f$ has two neighbors in $S_1$, while the other has none.}
         Without loss of generality, suppose $e$ is the neighbor of $a$ and $d$, see Figure \ref{fig_s_3,4_6,6}(v). Similarly, as before, $a$ and $d$ cannot have a neighbor in $V(G)\backslash H$, and $e$ at most one neighbor in $V(G)\backslash H$.  The vertices $b$ and $c$ can have at most one neighbor in $V(G)\backslash H$ each, and the vertices $\{a,b,c,d\}$ can form a path of length $3$ in $S_1$.  If the vertices $e$ and $f$ are adjacent, then $e$ cannot have a neighbor in $V(G)\backslash H$ and $f$ can have at most one neighbor in $V(G)\backslash H$.  Thus, the number of edges deleted is at most $13+3+2+3=21$.
         
         \item \textbf{One of the vertices $e$ or $f$ has two neighbors in $S_1$, while the other has one neighbor.}
         There are two possibilities. In the first case, without loss of generality, suppose $e$ is the neighbor of $a$ and $d$, and $f$ is the neighbor of $d$ (see Figure \ref{fig_s_3,4_6,6}(vi)).  Similarly, as before, the vertices $a$ and $d$ cannot have a neighbor in $V(G)\backslash H$.  The vertices $e$ and $f$ can have at most one neighbor in $V(G)\backslash H$ each.  On the other hand, the vertices  $b$ and $c$ can have at most one neighbor in $V(G)\backslash H$ each and the vertices $\{a,b,c,d\}$ can form a path of length $3$ in $S_1$.  If the vertices $e$ and $f$ are adjacent, $e$ and $f$ cannot have a neighbor in $V(G)\backslash H$. Thus, the number of edges deleted is at most $14+3+2+2=21$.
         
         Without loss of generality, assume that $e$ is the neighbor of $a$ and $d$, and $f$ is the neighbor of $b$ (see Figure \ref{fig_s_3,4_6,6}(vii)).  The vertices $a,b$ and $d$ cannot have a neighbor in $V(G)\backslash H$, but they along with $c$ can form a path of length $3$ in $S_1$.  The vertices $c,e$ and $f$ can have at most one neighbor in $V(G)\backslash H$ each.  Thus, the  number of edges deleted is at most $14+3+1+2=20$.
         
         \item \textbf{Both the vertices $e$ and $f$ have two neighbors in $S_1$.}  There are three possibilities. In the first case, without loss of generality, suppose $e$ and $f$ are neighbors of $a$ and $d$ both, see Figure \ref{fig_s_3,4_6,6}(viii).  Similarly, as before, the vertices $a$ and $d$ cannot have a neighbor in $V(G)\backslash H$.  The vertices $e$ and $f$ can have at most one neighbor in $V(G)\backslash H$ each.  The vertices $b$ and $c$ can have at most one neighbor in $V(G)\backslash H$ each.  The vertices $\{a,b,c,d\}$ can form a path of length $3$ in $S_1$.  If the vertices $e$ and $f$ are adjacent, $e$ and $f$ cannot have a neighbor in $V(G)\backslash H$.  Thus, the number of edges deleted is at most $15+3+2+2=22$.
         
         On the other hand, without loss of generality, assume $e$ is the neighbor of $a$ and $d$ both, while $f$ is the neighbor of $a$ and $b$ (see Figure \ref{fig_s_3,4_6,6}(ix)).  The vertices $a,b$ and $d$ cannot have a neighbor in $V(G)\backslash H$, but they along with $c$ can form a path of length $3$ in $S_1$. The vertices $c,e$ and $f$ can have at most one neighbor in $V(G)\backslash H$ each.  Thus, the  number of edges deleted is at most $15+3+1+2=21$.  
         
         In the last case, without loss of generality, assume $e$ is the neighbor of $a$ and $d$ both, while $f$ is the neighbor of $b$ and $c$ (see Figure \ref{fig_s_3,4_6,6}(x)).  The vertices $a,b,c$ and $d$ cannot have a neighbor in $V(G)\backslash H$, but they can form a path of length $3$ in $S_1$.  The vertices $e$ and $f$ can have at most one neighbor in $V(G)\backslash H$ each.  Thus, the number of edges deleted is at most $15+3+2=20$.
     \end{enumerate}
     By the induction hypothesis, we get $e(G-H)\leq \frac{20}{7}(n-8)$. Hence, $e(G)=e(G-H)+22\leq \frac{20}{7}(n-8)+22\leq\frac{20}{7}n$.
 \end{enumerate}
 
\end{proof}
\end{document}